        \newtheorem{theorem}{Theorem}
        \newtheorem{lemma}[theorem]{Lemma}
        \newtheorem{corollary}[theorem]{Corollary}
        \newtheorem{remark}[theorem]{Remark}
        \newtheorem{assumption}[theorem]{Assumption}
\newcommand{\sn}{ |\kern-0.25ex|\kern-0.25ex| }
\renewcommand{\div}{\mbox{\normalfont div}}
\renewcommand{\sin}{\mbox{\normalfont sin}}
\renewcommand{\cos}{\mbox{\normalfont cos}}
\newcounter{bla}
\journal{Computer Physics Communications}
\begin{document}

%\begin{frontmatter}

\title{A convergence analysis of Generalized Multiscale Finite Element Methods}

\author{\textbf{Eduardo Abreu}$^{1}$, \textbf{Ciro Diaz}$^{1}$}

\author{\textbf{Juan Galvis}$^2$ \corref{cor1}}
\cortext[cor1]{Email address : jcgalvisa@unal.edu.co}

\address{$^{1}$
  Department of Applied Mathematics (IMECC) \\
  University of Campinas (UNICAMP) \\
  Campinas 13.083-970, SP, Brazil}

\address{$^{2}$
  Departamento de Matem\'aticas, Universidad Nacional de Colombia,\\
  Carrera 45 No 26-85 - Edificio Uriel Gutierr\'ez, Bogot\'a D.C. -
  Colombia}

\begin{abstract}
% Abstract Eduardo modified by Ciro
In this paper, we consider an approximation method, and a novel
general analysis, for second-order elliptic differential equations with
heterogeneous multiscale coefficients. We obtain convergence of the
Generalized Multi-scale Finite Element Method (GMsFEM) method that uses
local eigenvectors in its construction. The analysis presented here
can be extended, without great difficulty, to more sophisticated GMsFEMs.
For concreteness, the obtained error estimates generalize and simplify the convergence analysis   of [J. Comput. Phys. 230 (2011), 937-955]. The
GMsFEM method construct basis functions that are obtained by
multiplication of (approximation of) local eigenvectors by partition of
unity functions. Only important eigenvectors are used in the construction. 
The error estimates are general
and are written in terms of the eigenvalues of the eigenvectors not used
in the construction. The error analysis involve local and global
norms that measure the decay of the expansion of the solution in terms of
local eigenvectors. Numerical experiments are carried out to verify the
feasibility of the approach with respect to the convergence and stability
properties of the analysis in view of the good scientific computing
practice.
\end{abstract}

\begin{comment}
\section*{Abstract}
% Abstract Juan
In this paper we consider the approximation of second order elliptic problems with 
heterogeneous multiscale coefficients. We obtain  convergence of the Generalized 
Multi-scale Finite Element Method (GMsFEM)  that uses
local eigenvectors in its construction.  The analysis presented 
here can be extended, without great difficulty, to more sophisticated GMsFEMs.
The GMsFEM method construct basis functions that are obtained by multiplication of 
(approximation of) local  eigenvectors  by partition of unity functions.
Only important eigenvectors are used in the construction. The error estimates are general and
are written in terms of the eigenvalues of the eigenvectors not used in the construction. 
The error analysis 
involve local and global norms that measure the decay of the expansion of the solution in terms of 
local eigenvectors. The obtained error estimates improve the results 
of [J. Comput. Phys. 230 (2011), 937--955]
\end{comment}

\begin{keyword}
Multiscale; GMsFEM; PDE; Elliptic.
\end{keyword}
%\end{frontmatter}

\maketitle

\date{}

\def \bn {\hfill \\ \smallskip \noindent}
\def \proof{\bn {\bf Proof.}}
\def \Box {\vrule height5pt width5pt depth0pt}
\def \endproof{\hfill \Box \vskip .5cm}

\pagestyle{myheadings}
\thispagestyle{plain}
\markboth{hc.tex}{}

\tableofcontents 

\section{Introduction}

Approximation of partial differential equations posed on domains with multiscale and heterogeneous 
properties appear in variety of applications. For instance, when modeling 
subsurface flow scenarios, subsurface properties typically vary several orders of magnitude over multiple scales.  In this case,  the high-contrast  in the properties 
such as permeability raises additional issues  to be consider when constructing
approximation of solutions. 
Several multiscale models
to efficiently  solve flow and transport processes have been considered. In despite of many contributions, the design and mathematical analysis of high-contrast multiscale problems continue being a challenging 
problem; See 
for instance 
\cite{hw97, aarnes, aej07, bo09,  AKL, arbogast02, apwy07, cdgw03, eghe05, jennylt03, cgh09, hughes98,eh09,ge09_1,
ge09_1reduceddim, sarkisguzman, sarkisburman, tat}.
These approaches approximate the effects of the fine-scale features using 
a coarse mesh. They attempt to capture the fine scale effects on a coarse grid via localized basis functions.  The main idea of Multiscale Finite Element Methods (MsFEMs) is to construct
basis functions that are used
to approximate the solution on a coarse grid. 
The accuracy of MsFEMs is found to be very sensitive
to the particularities of the construction of the  basis functions 
(e.g., boundary conditions of local problems). See for instance \cite{eh09,ehg04, ehw99}).

It is known that the construction of the basis functions
need to be carefully designed in order to obtain accurate
coarse-scale approximations of the solution (e.g., \cite{eh09}).
In particular,  the resulting basis functions need to have similar oscillatory
behavior as the fine-scale solution. In classical multiscale methods, a number of approaches are proposed
to construct  basis functions, 
e.g., oversampling techniques or the use of limited global
information (e.g., \cite{hw97, eh09}) that employs
 solutions in larger regions
to reduce localization errors. 
Recently, a new and promising methodology was introduced for the construction 
of basis function. This methodology is referred as to Generalized Multiscale 
Finite Element Method (GMsFEM). The main goal of GMsFEMs
is to construct 
 coarse spaces for MsFEMs
that result in accurate coarse-scale solutions. 
This methodology was first developed in \cite{ge09_1,ge09_1reduceddim, eglw11}
in connections with the robustness of domains decomposition iterative methods
for solving the  elliptic equation with heterogeneous coefficients subjected to
appropriate boundary conditions
\begin{equation}
-\mbox{div}(\kappa(x)\nabla u)=f,
\label{eq:problem1}
\end{equation}
where $\kappa(x)$ is a heterogeneous scalar field with high-contrast. In particular, 
it is  assumed that $\kappa(x)\geq c_0>0$ (bounded below), while
$\kappa(x)$ can have very large values.

A main ingredient in the construction was the use of local generalize eigenvalue 
problems and (possible multiscale) partition of unity functions to construct the coarse spaces.
Besides using one coarse function per coarse node, in the GMsFEM
it was proposed to use several multiscale basis functions per coarse node.
These basis functions represent
important features of the solution within a coarse-grid 
block and they are 
computed using eigenvectors of an eigenvalue problem.
Then, in  the works \cite{egw10, CEG, EGG_MultiscaleMOR, Review},  some studies 
of the coarse approximation properties of the GMsFEM were carried out. 
In these works and for applications to high-contrast problems, methodologies to keep 
small the dimension of the resulting coarse space were successfully proposed.
The use of coarse spaces that somehow incorporates important modes of a (local) energy 
related to the problem motivated the general version of the GMsFEM.
Thus,a more general and practical GMsFEM 
was then developed in \cite{egh12} where several (more practical) options to 
compute important modes to be include in the coarse space was used. 
See also \cite{egt11} for an earlier construction.
It is important 
to mention that the methodology 
in \cite{egh12} was designed for parametric and nonlinear problems and 
can be applied for variety of applications as it have been shown in 
recent developments not review here.

In this paper, we prove convergence of the GMsFEM method that uses
local eigenvectors as developed in \cite{CEG, ge09_1,ge09_1reduceddim, eglw11}. The analysis presented here can be extended, without great difficulty, to more sophisticated GMsFEMs.
Some convergence analysis of the GMsFEM, using local eigenvectors
or approximation of them was obtained in \cite{egw10}.  The prove, as usual in finite 
element analysis, focuses on constructing interpolation operator to the coarse finite 
element space. The a priory error is obtained for square integrable 
right hand side $f$ in (\ref{eq:problem}).
Additionally, in \cite{egw10}  the authors make some assumptions concerning integrability of
residuals and also concerning boundedness of the quotients of local energy norms 
with weight $\kappa$ and $\kappa \chi^2$ where $\chi^2$ is a especial 
partition of unity function. These assumptions are hard to verify in practice. 
Moreover, in the analysis they use a Caccioppoli inequality to write energy 
estimates from a region to a bigger region. Therefore, extensions of the analysis in \cite{egw10} to other equations and/or 
different discretization is not straightforward.

In this paper, we substantially simplify the analysis of GMsFEM methods and remove the 
assumptions used in \cite{egw10} to obtain convergence, 
yielding a general convergence proof and more suited for computational practice. We 
assume square integrability of the right hand side $f$.  In order to obtain 
error bounds in terms of the decay of the eigenvalues used in the construction we assume that the problem is regular in the sense that 
the solution can be well approximated by local eigenvectors which in the case of smooth coefficients, square integrable right hand side and convex domains,  is implied by the classical regularity of the problem.

It is worth to mention a main difference between the classical finite element analysis 
and the analysis of GMsFEM procedures for the case of heterogeneous multiscale coefficients. In the usual finite element analysis, to write the interpolation error 
estimates, it is assumed that the solution is smooth enough or regular enough 
in the classical (Sobolev) sense. This is done while using Hilbert norms (at least 
for elliptic problems). 
In the case of discontinuous multiscale coefficients, it is well know that solutions 
are not smooth in the classical sense. Then, the classical finite element 
analysis arguments do not work.
In this paper, we are able to write interpolation error estimates using norms 
suitable for the problem at hand. In particular, to measure the ``smoothness'' of the 
solution we use the decay of the expansion of the solution in terms of global 
eigenvectors. This is motivated by the fact that, for a given elliptic operator, 
the eigenvectors are a good model for smooth functions in the scale of norms
generated by powers of the operator. 
We define then global norms, using the decay of the expansion over global 
eigenvectors. We also define local norms using the decay of the expansion 
in terms of local eigenvectors (computed locally in a coarse node neighborhood). 
The main result of this paper is that we can compare the new local and global norms.
With this new norms, we are able to write approximation results for the 
interpolation of functions that solve (\ref{eq:problem}) with square 
integrable right hand side. We also prove error estimates in terms of the eigenvalues 
of the eigenvalue problem used in the construction.

The rest of the paper is organized as follow.
In Section \ref{sec:preliminaries} we present some preliminaries on multiscale methods. 
In Section \ref{sec:global} we collect some facts on the global eigenvalue problem related 
to the problem. Here we introduce a scale of global norms used for the analysis. These 
norms measure the decay of the expansion in terms of global eigenvectors.
In Sections \ref{sec:localN} and \ref{sec:localD} we study the local eigenvalue problems 
also using norms that 
measure the  decay of the expansion in terms of the local eigenvectors. We also relate 
local norms to the boundary values of the eigenvalue problem. 
In Section \ref{sec:gmsfem} we review a very particular realization of the GMsFEM 
methodology that is the one analyzed in this paper. In Section \ref{section8} 
we obtain our interpolation  error for the resulting  method. We also write our 
convergence result. We present some numerical experiments in Section \ref{sec:numerics}. Our numerical results verify our theoretical findings for smooth coefficients. We also consider a more practical case with heterogeneous multiscale coefficients.  Finally, in Section \ref{sec:discussions} we draw some conclusions
and make some final comments.

\section{Preliminaries on multiscale finite element methods}\label{sec:preliminaries}

In this section, we  describe multiscale finite element method
framework. 
In general terms, the MsFEMs compute
the coarse-scale solution by using multiscale basis functions. It can be 
casted as a numerical upscaling procedure. Also as a numerical homogenization 
method where, instead of effective parameters representing small scale effects, 
basis functions are constructed that capture the small scale effects on solutions.

Multiscale techniques can be applied to variety of 
problems. In this paper, in order to fix ideas, we consider 
a second order elliptic problem with a possible multiscale high-contrast
coefficient.
More precisely, let $\Omega\subset\mathbb{R}^2$ (or $\mathbb{R}^3$) be a 
polygonal domain.
We consider the elliptic equation with heterogeneous coefficients
\begin{equation*}
-\mbox{div}(\kappa(x)\nabla u)=f,
\end{equation*}
where $\kappa(x)$ is a heterogeneous scalar field with high-contrast. In particular,
we assume that $\kappa(x)\geq c_0>0$ (bounded below), while
$\kappa(x)$ can have very large values. We assume that $\kappa\ in L^\infty({\Omega})$ and therefore 
$\kappa$ might be discontinuous. 
The
variational formulation of this problem is: 
Find $u\in H_0({\Omega})$ such that 
\begin{equation}\label{eq:problem}
a(u,v)=f(v) \quad \mbox{ for all } v\in H_0^1({\Omega}).
\end{equation}
Here the bilinear form $a$ and the linear functional $f$ are 
defined by
\begin{equation*}
a(u,v)=\int_{\Omega} 
\kappa(x)\nabla u(x)\nabla v(x) dx 
\quad \mbox{ for all } u,v\in H_0^1({\Omega})
\end{equation*}
and 
\[
f(v)=\int_{\Omega}f(x)v(x)dx \quad \mbox{ for all } v\in H_0^1({\Omega}).
\]

 Let $\mathcal{T}^H$ be a triangulation composed by elements
 $K$. We refer to the triangulation $\mathcal{T}^H$
as a coarse triangulation in the sense that does not necessarily  
resolve all the scales 
in the model (in our case that would be all variations and discontinuities 
of $\kappa$). We denote  $\{y_i\}_{i=1}^{N_v}$ the vertices of the coarse mesh
$\mathcal{T}^H$ and define the neighborhood of the 
node $y_i$ by
\begin{equation*}
\omega_i=\bigcup \{ K\in\mathcal{T}^H; ~~~ y_i\in \overline{K}\}.
\end{equation*}
and the neighborhood of an element $K$ by, 
\begin{equation}\label{eq:def:omegaK}
\omega^K=\bigcup\{ w_i~~; ~~~ K \subset w_i\}.
\end{equation}
%and the neighborhood of the coarse element $K$ by
%\begin{equation}\label{eq:def:omegaK}
%\omega_K=\bigcup \{ \omega_j\in\mathcal{T}^H; ~~~ y_j\in 
%\overline{K}\}.
%\end{equation}

Using the coarse mesh $\mathcal{T}^H$ 
we introduce  coarse
 basis functions 
$\{\Phi_i\}_{i=1}^{N_c}$, where $N_c$ is the number 
of coarse basis functions. In our paper, the basis functions
are supported in $\omega_i$; however, for $\omega_i$, there
may be multiple basis functions.
MsFEMs approximate the solution on a coarse grid as
$u_0=\sum c_i \Phi_i$, where $c_I$ are determined from 
\begin{equation*}
a(u_0,v)=f(v),\quad \text{for all}\  v\in 
\text{span}\{ \Phi_i\}_{i=1}^{N_c}.
\end{equation*}
Once $c_i$'s are determined, one can define a fine-scale approximation
of the solution by reconstructing via basis functions,
$u_0=\sum_{i=1}^{N_c} c_i \Phi_i$.

\section{Global eigenvalue problem}\label{sec:global}

In this section, we recall some facts about the global eigenvalue problem associated 
to problem \eqref{eq:problem1}. We stress that the global eigenvalue problem 
is used in the analysis only and it is not use in the computations.

We start the presentation by introducing the global mass bilinear form. This is given by
\begin{equation*}%\label{eq:def:M}
m(v,w)=\int_{\Omega}\kappa v w \quad \mbox{ for all } v,w\in H^1_0(\Omega).
\end{equation*}
Note that we use the coefficient $\kappa$ in the mass matrix. The reason is 
that our main application in mind is on high-contrast problems and, as show in 
\cite{Review, EG09, ge09_1, ge09_1reduceddim}, it is important to define the mass 
matrix with the coefficient $\kappa$. Moreover, more complicated bilinear forms 
can be also used as in recent developments in GMsFEM; see \cite{egh12, EfendievGLWESAIM12}. 

We consider the eigenvalue problem (in weak form) that seeks to find eigenfunctions 
$\phi$ and scalars $\mu$ such that
\begin{equation}\label{eq:eigenvalueproblem3}
a(\phi,z)=\mu m(\phi,z)\quad \mbox{ for all } z\in H_0^1({\Omega}).
\end{equation}
Denote it's eigenvalues and eigenfunctions by
$\{\mu_\ell\}$ and $\{ \phi_{\ell}\}$, 
respectively.
We order eigenvalues as 
\begin{equation}\label{eq:orderingg}
\mu_1\leq \mu_2 \leq \dots\leq 
\mu_\ell \dots.
\end{equation}
We have $\mu_1>0$. The eigenvalue  problem (\ref{eq:eigenvalueproblem3})  is the weak form of the eigenvalue problem 
\begin{equation}\label{eq:eigproblem}
-\mbox{div} (\kappa\nabla \phi )=\mu \kappa \phi
\end{equation}
in ${\Omega}$ with  homogeneous 
Dirichlet boundary condition on $\partial {\Omega}$.

We recall that the eigenvectors form a complete ($m$)-orthonormal system of $L^2({\Omega})$
that is also orthogonal with respect to the bilinear form $a$.  Given any $v\in H^1_0(\Omega)$ we can write 
\[
v=\sum_{\ell =1}^{\infty} 
m(v,\phi_\ell) \phi_\ell
\]
and compute the bilinear form $a$ as
\begin{equation}\label{eq:avvg}
a (v,v)=\sum_{\ell =1}^{\infty}
 \mu_{\ell }
 m(v,\phi_\ell)\phi_\ell
\end{equation}
and the bilinear form $m$ as
\begin{equation}\label{eq:mvvg}
m(v,v)=
\sum_{\ell=1}^{\infty}
 m(v,\phi_\ell)^2 .
\end{equation}

It is important to recall that the expansion of the solution can be explicitly given. In fact, 
from the weak form (\ref{eq:problem}) we see that 
\[
\mu_\ell m(u,\phi)=a(u,\phi)=f(\phi).
\]
Then we have
\begin{equation}\label{eq:expansionug}
u=\sum_{\ell=1}^\infty \frac{1}{\mu_\ell}f(\phi_\ell)\phi_\ell.
\end{equation}

The eigenvector are the regular functions \emph{par excellence} 
when working with the differential operator $-\mbox{div}(\kappa \nabla \cdot)$. In particular 
we stress the following fact. 
\begin{remark}\label{rem:globaleig}
We have that $\kappa \nabla \phi_\ell$ has a square integrable divergence. That is, 
$-\mbox{\normalfont div}(\kappa\nabla \phi_\ell)\in L^2({\Omega})$.
This follows by observing that, in a generalize sense, $-\mbox{\normalfont div}(\nabla \phi_\ell)=\mu_\ell \kappa \phi_\ell$.
\end{remark}

\subsection{Global Norms based on eigenvalue expansion decay}\label{sec:globalnorms}

In this section, we introduce a scale of norms that help measuring the 
decay of the expansions in terms of eigenvectors of the global eigenvalue problem. 
These norms are  used in the {\it a priori} error estimates of our 
GMsFEM method. We note that, without assuming some sort of regularity of the solution
of \eqref{eq:problem},
it is difficult to measure the rate of the error in finite element approximations and give 
error estimates.  For this paper, we only assume that the forcing term is
square integrable in order to obtain approximation using global eigenvector. Later we consider the case of approximation using locally constructed basis functions with small support that employ local eigenvector in its design. \\

For any $v\in L^2({\Omega})$ written as $v=\sum_{\ell=1}^\infty m(v,\phi_\ell) \phi_\ell$
and $s>0$,  we introduce the norm $ ||| \cdot ||| _{s;{\Omega}}$ defined by,
\begin{equation*}%\label{eq:def:norms}
|||v|||_{s;{\Omega}}^2=\sum_{\ell=1}^\infty  \mu_\ell^{s} m(v,\phi_\ell)^2.
\end{equation*}
We note that these norms depend on the bilinear forms $a$ and $m$ but, in order to 
make notation simpler, we do not stress this dependence in our notation. Note that 
\[
|||v|||_{0;{\Omega}}^2=m(v,v)=\int_{\Omega} \kappa v^2 \quad \mbox{ and }\quad 
|||v|||_{1;{\Omega}}^2=a(v,v)=\int_{\Omega} \kappa |\nabla v|^2.
\]
In this paper, we mainly use the norm  $ ||| \cdot ||| _{s; {\Omega}}$  with $s=2$. We have that 
\[
|||u|||_{2;{\Omega}}^2=\sum_{\ell=1}^\infty \mu_\ell ^2m(v,\phi_\ell)^2.
\]
Then, if $|||u|||_{2;{\Omega}}<\infty$ we can define the operator $\mathcal{A}$ applied to $u$ by
\[
\mathcal{A}u=\sum_{\ell=1}^\infty \mu_\ell m(u,\phi_\ell) \phi_\ell.
\]
We readily have  $\mathcal{A}u\in L^2({\Omega})$ since,
\[
||| \mathcal{A}u|||_{0;{\Omega}}^2=m(\mathcal{A}u,\mathcal{A}u)=\sum_{\ell=1}^\infty \mu_\ell^2 m(u,\phi_\ell)^2=|||u|||_{2;{\Omega}}^2<\infty.
\]
Furthermore, if  $|||u|||_{2;{\Omega}}<\infty$ we also have the following \emph{integration by parts} relation, that can be verified by straightforward calculations,
\begin{equation}\label{eq:regularsolution}
a(u,v)=m(\mathcal{A}u,v) \quad \mbox{ for all } v\in H_0^1({\Omega}).
\end{equation}

We now present a characterization of $\mathcal{A}$ using the divergence operator 
$\mbox{div}$ and the coefficient  $\kappa$. This implies that for even integer values of $s$  (in particular for $s=2$), the norm  $||| \cdot |||_s$ is computed by subassembly of 
similar norms in subdomains.   

\begin{theorem}\label{thm:AanddivGlobal}

The operator $\mathcal{A}$ is a locally defined operator. More precisely, if $|||u|||_{2;{\Omega}}<\infty$ we have 
that $-\kappa^{-1}\mbox{\normalfont div}(\kappa \nabla u)$ belongs to the space $L^2({\Omega})$ and we have 
\[
\mathcal{A} u = -\kappa^{-1}\mbox{\normalfont div}(\kappa \nabla u).
\]
Moreover, we have 
\[
|||u|||^2_{2;{\Omega}}=\| \mathcal{A}u\|_0^2=\int_{\Omega} \kappa^{-1} | \mbox{\normalfont div}(\kappa \nabla u)|^2.
\]
\end{theorem}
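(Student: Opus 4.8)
The plan is to promote the weak integration-by-parts identity \eqref{eq:regularsolution} to a pointwise identity between $\mathcal{A}u$ and the differential operator $-\kappa^{-1}\mbox{\normalfont div}(\kappa\nabla u)$, and to extract the regularity statement $-\kappa^{-1}\mbox{\normalfont div}(\kappa\nabla u)\in L^2({\Omega})$ as a byproduct. First I would record that $\mathcal{A}u\in L^2({\Omega})$: by the computation preceding the statement we already have $\|\mathcal{A}u\|_0^2=m(\mathcal{A}u,\mathcal{A}u)=|||u|||_{2;{\Omega}}^2<\infty$, and since $\kappa\geq c_0>0$ the $m$-norm dominates a multiple of the plain $L^2$ norm (namely $c_0\|v\|_{L^2}^2\leq m(v,v)$). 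Hence $\mathcal{A}u$ is genuinely square integrable, and using $\kappa\in L^\infty({\Omega})$ so is $\kappa\,\mathcal{A}u$.

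Next I would interpret \eqref{eq:regularsolution} distributionally. Restricting the test functions to $v\in C_c^\infty({\Omega})\subset H_0^1({\Omega})$ and unfolding the bilinear form $a$, the identity $a(u,v)=m(\mathcal{A}u,v)$ reads
\[
\int_{\Omega} \kappa\nabla u\cdot\nabla v = \int_{\Omega} (\kappa\,\mathcal{A}u)\,v \quad\mbox{ for all } v\in C_c^\infty({\Omega}).
\]
By the definition of the distributional divergence of the field $\kappa\nabla u$, the left-hand side equals $-\langle \mbox{\normalfont div}(\kappa\nabla u),v\rangle$. Thus the distribution $\mbox{\normalfont div}(\kappa\nabla u)$ acts on every test function exactly as the $L^2$ function $-\kappa\,\mathcal{A}u$; since $C_c^\infty({\Omega})$ is dense, this forces the equality $\mbox{\normalfont div}(\kappa\nabla u)=-\kappa\,\mathcal{A}u$ in $L^2({\Omega})$. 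In particular $\mbox{\normalfont div}(\kappa\nabla u)\in L^2({\Omega})$, and dividing by $-\kappa$ (legitimate because $\kappa^{-1}\in L^\infty({\Omega})$) yields $\mathcal{A}u=-\kappa^{-1}\mbox{\normalfont div}(\kappa\nabla u)$.

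The norm identity is then a one-line substitution: starting from $\|\mathcal{A}u\|_0^2=m(\mathcal{A}u,\mathcal{A}u)=|||u|||_{2;{\Omega}}^2$ and replacing $\mathcal{A}u$ by $-\kappa^{-1}\mbox{\normalfont div}(\kappa\nabla u)$,
\[
|||u|||_{2;{\Omega}}^2 = \int_{\Omega} \kappa\,(\mathcal{A}u)^2 = \int_{\Omega} \kappa\,\kappa^{-2}|\mbox{\normalfont div}(\kappa\nabla u)|^2 = \int_{\Omega} \kappa^{-1}|\mbox{\normalfont div}(\kappa\nabla u)|^2 .
\]
The locality assertion is then immediate: since $\mathcal{A}$ coincides with the differential operator $-\kappa^{-1}\mbox{\normalfont div}(\kappa\nabla\cdot)$, whose value near a point depends only on $u$ near that point, the integral expression for $|||\cdot|||_{2;{\Omega}}^2$ splits additively over any subdomain decomposition of ${\Omega}$, which is the announced subassembly property.

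I expect the only genuine obstacle to be the regularity upgrade in the second step, namely arguing that the weak identity, which a priori controls $\mbox{\normalfont div}(\kappa\nabla u)$ only as a distribution (generically just in $H^{-1}({\Omega})$ for an $H_0^1$ solution), in fact exhibits it as an $L^2$ function. This hinges entirely on the standing hypothesis $|||u|||_{2;{\Omega}}<\infty$, which is exactly what places the right-hand side $-\kappa\,\mathcal{A}u$ in $L^2({\Omega})$; the remaining ingredients (density of $C_c^\infty({\Omega})$ and the two-sided bounds on $\kappa$) are routine. One point I would double-check is that \eqref{eq:regularsolution} is valid under precisely the hypothesis $|||u|||_{2;{\Omega}}<\infty$, so that no extra smoothness of $u$ or of $\kappa$ enters the argument.
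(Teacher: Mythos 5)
Your proof is correct, but it follows a genuinely different route from the paper's. The paper argues constructively: it truncates the eigenexpansion to $u^N=\sum_{\ell=1}^{N}m(u,\phi_\ell)\phi_\ell$, uses the spectral identity $-\kappa^{-1}\mbox{\normalfont div}(\kappa\nabla u^N)=\sum_{\ell=1}^{N}\mu_\ell m(u,\phi_\ell)\phi_\ell$ together with orthogonality to show that $\{\kappa^{-1}\mbox{\normalfont div}(\kappa\nabla u^N)\}_N$ is Cauchy in the $m$-norm (this is exactly where the hypothesis $|||u|||_{2;\Omega}<\infty$ enters), extracts the $L^2$ limit $U$, and then identifies $U$ with both $-\kappa^{-1}\mbox{\normalfont div}(\kappa\nabla u)$ and $\mathcal{A}u$ by testing against $z\in H_0^1(\Omega)$ (the latter via \eqref{eq:regularsolution}). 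You instead skip any construction: you observe that $\mathcal{A}u$ is already known to lie in $L^2(\Omega)$, feed \eqref{eq:regularsolution} into the distributional definition of $\mbox{\normalfont div}(\kappa\nabla u)$, and conclude $\mbox{\normalfont div}(\kappa\nabla u)=-\kappa\,\mathcal{A}u$ as $L^2$ functions, with the norm identity by substitution. Your argument is shorter and cleaner, and the two-sided bounds on $\kappa$ that you invoke ($\kappa\geq c_0$, $\kappa\in L^\infty$) are indeed standing assumptions of the paper. The trade-off is that your proof leans entirely on \eqref{eq:regularsolution}, which the paper asserts only as verifiable ``by straightforward calculations''; making that verification rigorous (term-by-term limits of the expansions of $a(u,v)$ and $m(\mathcal{A}u,v)$, justified by Cauchy--Schwarz under $|||u|||_{2;\Omega}<\infty$) is essentially where the paper's Cauchy-sequence argument does its work, so the analytic content is comparable rather than avoided --- a caveat you correctly flagged yourself. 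The paper's constructive version also has the side benefit of mirroring, almost verbatim, the proofs of the local analogues (Theorems \ref{thm:comparison} and \ref{thm:AanddivLocal2}), which is why the authors phrase it that way.
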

\begin{proof}
Recall that for $u\in L^2({\Omega})$,  we have the expansion
$u=\sum_{\ell =1}^{\infty} 
m(u,\phi_\ell) \phi_\ell.$ 
For an integer  $N$ define the truncated approximation of $u$ as,
\[
u^N=\sum_{\ell =1}^{N} 
m(u,\phi_\ell) \phi_\ell.
\]
We construct the $-\kappa^{-1}\mbox{div}(\kappa \nabla u)$ as a limit in the $m-$norm of the sequence 
of rescaled divergences given by  $\{ \kappa^{-1} \mbox{div}(\kappa \nabla u^N) \}_{N=1}^\infty$.
To this end, we prove that the sequence $\{ \kappa^{-1} \mbox{div}(\kappa \nabla u^N) \}_{N=1}^\infty$ is a  Cauchy sequence in the $m-$norm. 
Indeed, we have, by using the eigenvalue problem (\ref{eq:eigproblem}) and Remark \ref{rem:globaleig}, the following identity,
\[
\kappa^{-1}\mbox{div}(\kappa \nabla u^N) =\sum_{\ell =1}^{N} 
m(u,\phi_\ell)\mu_\ell \phi_\ell.
\]
So that, using the orthogonality of the eigenvectors, we conclude that for every $M>N$ we have, 
\[
\| \kappa^{-1}\mbox{div}(\kappa \nabla u^M)-\kappa^{-1}\mbox{div}(\kappa \nabla u^N)\|_0^2=
\sum_{\ell=N+1}^{M} m(u,\phi_\ell)^2 \mu_\ell^2.
\]
This implies the claim since the series $|||u|||^2_{2;{\Omega}}=\sum_{\ell=1}^{\infty} m(u,\phi_\ell)^2 \mu_\ell^2<\infty$. We conclude that there exist an $L^2({\Omega})$ function, denoted by $U$,  such that we have $\| U+\kappa^{-1}\mbox{div}(\kappa \nabla u^N)\|\to 0$ 
when $N\to \infty$. We also have that for any  $z\in H^1_0({\Omega})$ it holds, 
\[
\int_{\Omega}\kappa Uz = -\lim_{N\to \infty} \int_{\Omega} \kappa \kappa^{-1}\mbox{div}(\kappa \nabla u^N) z= \int_{\Omega} \kappa \nabla u \nabla z,
\]
which proves that $U= -\kappa^{-1}\mbox{div}(\kappa \nabla u^N) z$.

Finally note that, by using \eqref{eq:regularsolution},  for every function $z\in H^1_0({\Omega})$ we have, 
\[
\int_{\Omega} \kappa \mathcal{A} u z =m(\mathcal{A} u, v)=a(u,z)=\int_{\Omega} \kappa \nabla u \nabla z.
\]
\end{proof}

\begin{remark}
Notice that if $u$ is the solution of \eqref{eq:problem} with $f\in L^2({\Omega})$,  then,  we have  
$\mathcal{A}u=\kappa^{-1}f$. 
\end{remark}
\begin{lemma}\label{lem:aprioriest}
Assume that $|||f|||_s^2<\infty$ and let $u$ be the solution of~\eqref{eq:problem}. 
Consider $t\geq 1$ such that  $t-s-2\leq 0$ we have
\begin{equation*}%\label{eq:aprioriforu}
|||u|||^2_t\leq \mu^{t-s-2}_1 |||f|||_s^2
\end{equation*}
In particular, if $|||f|||_0<\infty$ we have that $|||u|||_2=|||\kappa^{-1}f|||_0=
\int_{\Omega} \kappa^{-1}f^2$.
\end{lemma}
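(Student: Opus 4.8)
The plan is to expand both $u$ and $f$ in the global eigenbasis $\{\phi_\ell\}$ and reduce the claimed bound to a term-by-term comparison of two series with common coefficients. First I would compute the eigen-coefficients of $u$: since $\{\phi_\ell\}$ is $m$-orthonormal these coefficients are $m(u,\phi_\ell)$, and testing \eqref{eq:problem} with $v=\phi_\ell$ together with \eqref{eq:eigenvalueproblem3} gives $\mu_\ell m(u,\phi_\ell)=a(u,\phi_\ell)=f(\phi_\ell)$, so that $m(u,\phi_\ell)=\mu_\ell^{-1}f(\phi_\ell)$, which is precisely the expansion \eqref{eq:expansionug}. Inserting this into the definition of the norm yields
\[
|||u|||_t^2=\sum_{\ell=1}^\infty\mu_\ell^{t}\,m(u,\phi_\ell)^2=\sum_{\ell=1}^\infty\mu_\ell^{t-2}\,f(\phi_\ell)^2 .
\]

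Second, I would carry out the comparison. Writing $|||f|||_s^2=\sum_{\ell}\mu_\ell^{s}f(\phi_\ell)^2$, the two series share the coefficients $f(\phi_\ell)^2$, so it suffices to compare the scalar weights. Splitting $\mu_\ell^{t-2}=\mu_\ell^{t-s-2}\mu_\ell^{s}$ and using that the exponent $t-s-2$ is non-positive together with the ordering $\mu_\ell\ge\mu_1>0$ from \eqref{eq:orderingg}, the map $x\mapsto x^{t-s-2}$ is non-increasing on $(0,\infty)$, whence $\mu_\ell^{t-s-2}\le\mu_1^{t-s-2}$ for every $\ell$. Therefore $\mu_\ell^{t-2}f(\phi_\ell)^2\le\mu_1^{t-s-2}\,\mu_\ell^{s}f(\phi_\ell)^2$, and summing over $\ell$ produces $|||u|||_t^2\le\mu_1^{t-s-2}|||f|||_s^2$. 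The hypothesis $|||f|||_s<\infty$ makes the right-hand side finite, so $u$ indeed has finite $t$-norm and every rearrangement above is legitimate.

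For the final assertion I would specialize to $s=0$, $t=2$, where $t-s-2=0$, so every weight equals $1$ and the inequality becomes the equality $|||u|||_2^2=\sum_\ell f(\phi_\ell)^2$. To recognize the right-hand side I would use $f(\phi_\ell)=\int_\Omega f\phi_\ell=m(\kappa^{-1}f,\phi_\ell)$, so that by orthonormality $\sum_\ell f(\phi_\ell)^2=m(\kappa^{-1}f,\kappa^{-1}f)=\int_\Omega\kappa^{-1}f^2$; alternatively this follows from Theorem \ref{thm:AanddivGlobal} combined with $\mathcal{A}u=\kappa^{-1}f$. The one point requiring care—and the step I would fix explicitly before starting—is exactly this bookkeeping of coefficients: the coefficients of $u$ come out in the $L^2$ pairing $f(\phi_\ell)=\int_\Omega f\phi_\ell$, whereas $|||\cdot|||_s$ is defined through the $m$ pairing, and the two must be reconciled by passing to the Riesz representative $\kappa^{-1}f$, for which $f(\phi_\ell)=m(\kappa^{-1}f,\phi_\ell)$. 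Once the two series are written in these common coefficients the estimate itself is a single monotonicity line, and the restriction $t\ge 1$ plays no role beyond placing us in the relevant scale of norms.
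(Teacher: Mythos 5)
Your proof is correct and follows essentially the same route as the paper's: expand $u$ via \eqref{eq:expansionug}, split the weight as $\mu_\ell^{t-2}=\mu_\ell^{t-s-2}\mu_\ell^{s}$, bound $\mu_\ell^{t-s-2}\leq\mu_1^{t-s-2}$ using the ordering \eqref{eq:orderingg}, and identify $f(\phi_\ell)=m(\kappa^{-1}f,\phi_\ell)$ so the remaining series is $|||\kappa^{-1}f|||_s^2$. Your write-up is in fact slightly cleaner than the paper's, which contains two typos (an exponent $t-s-1$ where $t-s-2$ is meant, and $\mu_0$ where $\mu_1$ is meant) that your monotonicity argument silently corrects, and you make explicit the $L^2$-versus-$m$ pairing bookkeeping that the paper handles only in its final two display lines.
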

\begin{proof}
Using the explicit expansion in~\eqref{eq:expansionug}, the definition of the 
norm $\|\cdot\|_t$ and then increasing the order of eigenvalues we have
\begin{align*}
||| u |||^2_t 
&=\sum_{\ell=1}^\infty \mu^{t}_\ell \frac{1}{\mu_\ell^2}f(\phi_\ell)^2
=\sum_{\ell=1}^\infty \mu^{t-s-1}_\ell\mu^{s}_\ell f(\phi_\ell)^2\\
&\leq
\mu^{t-s-2}_0 \sum_{\ell=1}^\infty  \mu^{s}_\ell f(\phi_\ell)^2\\
&=
\mu^{t-s-2}_0 \sum_{\ell=1}^\infty  \mu^{s}_\ell 
\left(\int_{\Omega} f \phi_\ell\right)^2\\
&=
\mu^{t-s-2}_0 \sum_{\ell=1}^\infty  \mu^{s}_\ell\left( \int_{\Omega}\kappa  (\kappa^{-1}f )\phi_\ell\right)^2
=
\mu^{t-s-2}_0 |||\kappa^{-1}f|||_s^2.
\end{align*}
This finishes the proof.
\end{proof}

\subsection{Approximation using global eigenvectors}

In this section, we show how to obtain a priori error estimates if we use the space spanned by 
the first eigenvectors.
Given an integer $L$ and $v\in H^1_0(\Omega)$, we define
\begin{equation*}%\label{eq:def:I-Omega-L1}
\mathcal{J}_{L} v=\sum_{\ell=1}^{L}
 m(v,\phi_\ell)\phi_\ell. 
\end{equation*}
From (\ref{eq:orderingg}),
(\ref{eq:avvg}),  and (\ref{eq:mvvg}) it is easy to prove the following inequality 

\begin{equation}\label{eq:truncation1}
\int_{\Omega} \kappa(v-\mathcal{J}_{L} v)^2\leq 
\frac{1}{\mu_{L+1}} a(v-\mathcal{J}_{L} v,v-
\mathcal{J}_{L} v)
\leq 
\frac{1}{\mu_{L+1}} a(v,v).
\end{equation}

When ${L}=1$ and $\kappa=1$ we obtain the usual 
Friedrichs' inequality.\\

 If $u$ is the solution of~\eqref{eq:problem} and $||| f|||_s<\infty$ we have
the following a priori estimate.
\begin{lemma}%\label{lem:aprioriest2}
Let $u$ be the solution of~\eqref{eq:problem}. If if $t-s-2<0$ we have
\begin{equation*}
|||u-\mathcal{J}_{L} u|||^2_t\leq 
\mu^{t-s-2}_{L+1} |||f|||_s^2.
\end{equation*}
\end{lemma}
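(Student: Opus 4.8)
The plan is to reduce the estimate to a one-line computation with the eigenvector expansion of the tail $u-\mathcal{J}_L u$, following the proof of Lemma~\ref{lem:aprioriest} but retaining only the modes of index larger than $L$. First I would use the definition of $\mathcal{J}_L$ together with the completeness of the $m$-orthonormal system $\{\phi_\ell\}$ to write $u-\mathcal{J}_L u=\sum_{\ell=L+1}^\infty m(u,\phi_\ell)\phi_\ell$. The weak form~\eqref{eq:problem} gives $\mu_\ell m(u,\phi_\ell)=a(u,\phi_\ell)=f(\phi_\ell)$, i.e.\ the coefficients $m(u,\phi_\ell)=\mu_\ell^{-1}f(\phi_\ell)$ of the explicit expansion~\eqref{eq:expansionug}. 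Substituting into the definition of the norm yields
\[
|||u-\mathcal{J}_L u|||_t^2=\sum_{\ell=L+1}^\infty \mu_\ell^{t}\,m(u,\phi_\ell)^2=\sum_{\ell=L+1}^\infty \mu_\ell^{t-2}f(\phi_\ell)^2.
\]

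The decisive step is to split $\mu_\ell^{t-2}=\mu_\ell^{t-s-2}\mu_\ell^{s}$ and invoke the ordering~\eqref{eq:orderingg}. Since $t-s-2<0$ and $\mu_\ell\ge\mu_{L+1}>0$ for every $\ell\ge L+1$, the map $x\mapsto x^{t-s-2}$ is decreasing, so $\mu_\ell^{t-s-2}\le\mu_{L+1}^{t-s-2}$; this is the only place the sign hypothesis is used, and keeping the direction of this inequality straight (in contrast to the nonnegative-exponent case) is the one point demanding care. Factoring out the uniform constant $\mu_{L+1}^{t-s-2}$ and then enlarging the index range back to all $\ell\ge 1$ gives
\[
|||u-\mathcal{J}_L u|||_t^2\le \mu_{L+1}^{t-s-2}\sum_{\ell=L+1}^\infty \mu_\ell^{s}f(\phi_\ell)^2\le \mu_{L+1}^{t-s-2}\sum_{\ell=1}^\infty \mu_\ell^{s}f(\phi_\ell)^2.
\]

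Finally I would identify the remaining series with $|||f|||_s^2$ exactly as in the last lines of the proof of Lemma~\ref{lem:aprioriest}, using $f(\phi_\ell)=\int_\Omega f\phi_\ell=m(\kappa^{-1}f,\phi_\ell)$; this is the convention under which the right-hand side is written as $|||f|||_s^2$. I expect no genuine obstacle: the entire content is that truncating at level $L$ replaces the factor $\mu_1$ of Lemma~\ref{lem:aprioriest} by the larger $\mu_{L+1}$, which quantifies the gain from resolving the first $L$ eigenmodes. The estimate is moreover sharp, since equality holds when $f$ is concentrated on the single mode $\phi_{L+1}$, the first one not captured by $\mathcal{J}_L$.
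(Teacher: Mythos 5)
Your proposal is correct and follows essentially the same route as the paper's proof: expand the tail $u-\mathcal{J}_L u$ via the explicit expansion~\eqref{eq:expansionug}, split $\mu_\ell^{t-2}=\mu_\ell^{t-s-2}\mu_\ell^{s}$, and use the eigenvalue ordering together with the sign condition $t-s-2<0$ to pull out the factor $\mu_{L+1}^{t-s-2}$ before enlarging the sum. Your closing remarks on the notational convention $f(\phi_\ell)=m(\kappa^{-1}f,\phi_\ell)$ and on sharpness for $f$ concentrated on the mode $\phi_{L+1}$ are accurate additions, not deviations.
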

\begin{proof}
Using the explicit expansion in~\eqref{eq:expansionug}, the definition of the 
norm $\|\cdot\|_t$ and then increasing oder of eigenvalues we have
\begin{align*}
|||u-\mathcal{J}_{L} u|||^2_t
&=\sum_{\ell={L}+1}^\infty \mu^{t}_\ell \frac{1}{\mu_\ell^2}f(\phi_\ell)^2
=\sum_{\ell=L+1}^\infty \mu^{t-s-2}_\ell\mu^{s}_\ell f(\phi_\ell)^2\\
&\leq
\mu^{t-s-2}_{L+1} \sum_{\ell=1}^\infty  \mu^{s}_\ell f(\phi_\ell)^2=
\mu^{t-s-2}_{L+1} |||f|||_s^2.
\end{align*}
This finishes the proof.
\end{proof}

We observe that, in particular, we have the following a priori error estimates. 
\begin{align*}%\label{eq:decay}
a(u-\mathcal{J}_{L} u,u-\mathcal{J}_{L} u)
=|||u-\mathcal{J}_{L} u|||^2_1\leq 
%\\
%&=\sum_{\ell=L+1}^\infty \lambda_\ell \frac{1}{\lambda_\ell^2}f(\psi_\ell)^2
%=\sum_{\ell=L+1}^\infty \lambda^{-(s+1)}_\ell\lambda^{s}_\ell f(\psi_\ell)^2\\
%&\leq
%\lambda^{-(s+1)}_{L+1} \sum_{\ell=L+1}^\infty  \lambda^{s}_\ell f(\psi_\ell)^2=
\lambda^{-(s+1)}_{L+1} |||f|||_s^2.
\end{align*}

The space generated by the first eigenvalues gives good approximation spaces and 
the analysis becomes easy. 

\begin{comment}
We note that the final estimate depends on the decay 
of the eigenvalues.  The problem is that it is impractical to accurately compute these 
eigenvectors, especially for heterogeneous multiscale problems.  The good approximation 
given by important eigenvector can still be used to design multiscale methods. 
This is the core idea of the GMsFEM methodology designed to construct efficient 
approximation procedures with robust approximation properties.
We present some basics of the GMsFEM method in the next Section \ref{sec:gmsfem}. 
Before that, we summarize some results concerning local eigenvalue problems 
(i.e., eigenvalue problems posed on subdomains used to support 
the multiscale finite element basis functions). These results are similar to the 
ones obtained in this section but we include them for completeness. 
\end{comment}

\section{Dirichlet eigenvalue problem in coarse blocks}\label{sec:localD}
In this section, we study the local Dirichlet eigenvalue problem associated 
to problem \eqref{eq:problem1}.
For any $K$, we  define the following bilinear forms
\begin{equation*}%\label{eq:def:A^K}
a^{K}(v,w)=\int_{K} \kappa \nabla v \nabla w \quad \mbox{ for all } v,w\in H^1(K), \quad i=1,\dots,N,
\end{equation*}
and 
\begin{equation*}%\label{eq:def:M^K}
m^{K}(v,w)=\int_{K}\kappa v w \quad \mbox{ for all } v,w\in H^1(K).
\end{equation*}
We consider the eigenvalue problems that seek 
eigenfunctions $\phi\in H^1_0(K)$ and scalars $\mu$ such that
\begin{equation*}%\label{eq:eigenvalueproblem4}
a^{K}(\phi,z)=\mu m^{K}(\phi,z)\quad 
\mbox{ for all } z\in H^1_0(K).
\end{equation*}
and denote its eigenvalues and eigenvectors by
$\{\mu_\ell^{K}\}$ and $\{ \phi_{\ell}^{K}\}$, 
respectively. Note that the eigenvectors 
$\{\phi^{K}_\ell\}$ form an orthonormal basis of 
 $L^2(K)$ with respect to the $m^K$ inner product.
We order eigenvalues as 
\begin{equation*}%\label{eq:orderingD}
\mu_1^{K}<\mu_2^{K} \leq \dots\leq 
\mu_\ell^{K} \dots.
\end{equation*}
The eigenvalue  problem above  
corresponds to the approximation 
of the eigenvalue problem 
\begin{equation}\label{eq:localeigproblemD}
-\mbox{div} (\kappa\nabla \phi  )=\mu \kappa \phi \quad \mbox{ in } K,
\end{equation}
with  homogeneous Dirichlet boundary condition on $\partial K$. These eigenvectors 
are the model of regular functions working with the differential operator
$-\mbox{div}(\kappa \nabla \cdot)$. In particular, the operator is well defined 
and well behaved  over these functions. We have that $\kappa \nabla \phi^K_\ell$ has 
a square integrable divergence. That is, 
$-\mbox{\normalfont div}(\nabla \phi^K_\ell)\in L^2(K)$. This follows by observing that, 
in a generalize sense, $-\mbox{\normalfont div}(\nabla \phi_\ell)=\mu_\ell 
\kappa \phi_\ell$.\\

Now we use the expansion in terms of local eigenvectors  and define norms based on the decay of the eigenexpansion. 
These local norms are the ones that naturally appear in the local interpolation errors for our interpolation operator. 
A main issue is to compare this local norms with the global norms defined in Section \ref{sec:globalnorms} for $s>1$. 
For the case $s=2$, we prove in Theorem \ref{thm:comparison} that the local norms can be  assembled to obtain 
a global norms equivalent to the norm $|||\cdot |||_{2,\Omega}$ only for functions that have zero value on block boundaries.

Given any $v\in L^2(K)$ we can write 
\[
v=\sum_{\ell =1}^{\infty} 
m^{K}(v,\phi_\ell^{K}) \phi_\ell^{K} 
\]
and compute the local energy bilinear form by
\begin{equation*}%\label{eq:avv1}
a^{K} (v,v)=\sum_{\ell =1}^{\infty}
 m^{K}(v,\phi_\ell^{K})^2 \mu_{\ell }^{K}.
\end{equation*}
We can also compute the local mass bilinear as,
\begin{equation*}%\label{eq:mvv1}
m^{K}(v,v)=
\sum_{\ell=1}^{\infty}
 m^{K}(v,\phi_\ell^{K})^2 .
\end{equation*}

The local norm to measure the decay of the expansion is introduced as follows. 
We 
introduce the norm,
\begin{equation*}%\label{eq:def:s-norm1}
||| v|||_{s,K}^2= \sum_{\ell=1}^\infty 
(\mu_\ell^{K})^{s} m^{K}( v, \phi_\ell^{K})^2.
\end{equation*}
Note that \[
|||v|||_{0;K}^2=m(v,v)=\int_{K} \kappa v^2 \quad \mbox{ and }\quad 
|||v|||_{1;K}^2= \int_{K} \kappa |\nabla v|^2.
\]
We consider the case $s=2$. If $|||u|||_{2,K}<\infty$ we can define the operator 
$\mathcal{A}^{K}$ by,
\[
\mathcal{A}^{K}u= 
\sum_{\ell=1}^\infty  
\mu_\ell^{K} m^{K}(u,\phi_\ell^{K})\phi_\ell^{K}
\]
which is square integrable since
\[
||| \mathcal{A}^{K}u|||_{0,K}^2=
\sum_{\ell=1}^\infty  
(\mu_\ell^{K})^2 m^{K}(u,\phi_\ell^{K})^2=
|||u|||_{2,K}^2.
\]
Additionally  if $|||u|||_{2,K}<\infty$,  we have the following \emph{local integration by parts} relation that can be verified by direct calculations,
\[
a^{K}(u,v)=m^{K}(\mathcal{A}^{K}u,v) \mbox{ for all } v\in H^1_0(K).
\]

We have the following result. This result reveals that the local norms $||| u|||_{2,K}$ is related to the $L^2$ integrability of $\mbox{div}(\kappa \nabla \cdot u)$. The proof of the following theorem 
follows the proof of Theorem \ref{thm:AanddivGlobal} but we presented in the local setting in the interest of completeness.

\begin{theorem}\label{thm:comparison}

The operator $\mathcal{A}^{K}$ is a locally defined operator. More precisely, if $|||u|||^2_{2,K}<\infty$ we have 
that $-\kappa^{-1}\mbox{\normalfont div}(\kappa \nabla u)$ belongs to the space $L^2(K)$ and we have 
\[
a^K( u,v) = -\int_K\kappa^{-1}\mbox{\normalfont div}(\kappa \nabla u)v \quad 
\mbox{ for all } v\in H_0^1(K).
\]
Moreover, we have
\[
|||u|||^2_{2,K}=\| \mathcal{A}^{K}u\|_{0,K}^2= \int_{K} \kappa^{-1} | \mbox{\normalfont div}(\kappa \nabla u)|^2.
\]
\end{theorem}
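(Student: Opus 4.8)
The plan is to mirror the proof of Theorem \ref{thm:AanddivGlobal}, now carried out with the local Dirichlet eigensystem $\{(\mu_\ell^{K},\phi_\ell^{K})\}$ on the single block $K$. First I would fix $u$ with $|||u|||_{2,K}<\infty$, expand it as $u=\sum_{\ell=1}^\infty m^{K}(u,\phi_\ell^{K})\phi_\ell^{K}$, and introduce the truncations $u^N=\sum_{\ell=1}^N m^{K}(u,\phi_\ell^{K})\phi_\ell^{K}$. Since $\mu_\ell^{K}\ge\mu_1^{K}>0$, the elementary bound $|||u|||_{1,K}^2\le(\mu_1^{K})^{-1}|||u|||_{2,K}^2$ shows that $u\in H_0^1(K)$ and that $u^N\to u$ in the local energy norm; each $u^N$ is a finite combination of $H_0^1(K)$ eigenfunctions.

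Next I would use the eigenvalue equation \eqref{eq:localeigproblemD}, namely $-\mbox{div}(\kappa\nabla\phi_\ell^{K})=\mu_\ell^{K}\kappa\phi_\ell^{K}$ in the generalized sense, to obtain the identity
\[
-\kappa^{-1}\mbox{div}(\kappa\nabla u^N)=\sum_{\ell=1}^N\mu_\ell^{K} m^{K}(u,\phi_\ell^{K})\phi_\ell^{K}=\mathcal{A}^{K} u^N.
\]
By $m^{K}$-orthonormality of the eigenfunctions, for $M>N$ one gets
\[
\|\mathcal{A}^{K} u^M-\mathcal{A}^{K} u^N\|_{0,K}^2=\sum_{\ell=N+1}^M(\mu_\ell^{K})^2 m^{K}(u,\phi_\ell^{K})^2,
\]
which tends to $0$ as $N\to\infty$ precisely because $|||u|||_{2,K}^2=\sum_{\ell=1}^\infty(\mu_\ell^{K})^2 m^{K}(u,\phi_\ell^{K})^2<\infty$. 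Hence $\{\kappa^{-1}\mbox{div}(\kappa\nabla u^N)\}$ is Cauchy in $L^2(K)$ and converges to some $U\in L^2(K)$, and orthogonality already yields $\|U\|_{0,K}^2=|||u|||_{2,K}^2$.

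The main point, and the step I expect to require the most care, is identifying the limit $U$ with the genuine distributional object $-\kappa^{-1}\mbox{div}(\kappa\nabla u)$, since $u$ need not be classically smooth and $\kappa$ is only $L^\infty$. For this I would test against an arbitrary $z\in H_0^1(K)$ and pass to the limit using the local integration-by-parts relation $a^{K}(u^N,z)=m^{K}(\mathcal{A}^{K} u^N,z)$: with $u^N\to u$ in energy and $\mathcal{A}^{K} u^N\to U$ in $L^2(K)$ one gets
\[
\int_K\kappa\,U z=\lim_{N\to\infty} m^{K}(\mathcal{A}^{K} u^N,z)=\lim_{N\to\infty} a^{K}(u^N,z)=a^{K}(u,z)=\int_K\kappa\nabla u\cdot\nabla z,
\]
which exhibits $\kappa\nabla u$ as having weak divergence $-\kappa U\in L^2(K)$, so that $U=-\kappa^{-1}\mbox{div}(\kappa\nabla u)=\mathcal{A}^{K}u$; reading the same chain of equalities backwards gives exactly the asserted integration-by-parts relation. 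The structural feature that makes this clean is that the Dirichlet eigenfunctions lie in $H_0^1(K)$, so no boundary contribution appears when integrating by parts against $H_0^1(K)$ test functions (this is what a Neumann version would have to handle separately). Finally, combining $\|U\|_{0,K}^2=|||u|||_{2,K}^2$ with the identification $U=-\kappa^{-1}\mbox{div}(\kappa\nabla u)$ yields
\[
|||u|||_{2,K}^2=\|\mathcal{A}^{K}u\|_{0,K}^2=\int_K\kappa^{-1}|\mbox{div}(\kappa\nabla u)|^2,
\]
completing the argument.
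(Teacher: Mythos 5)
Your proposal follows essentially the same route as the paper's proof: truncate the local eigenexpansion, use the eigenvalue equation and $m^K$-orthonormality to show $\{\kappa^{-1}\mbox{div}(\kappa\nabla u^N)\}$ is Cauchy in $L^2(K)$, identify the limit $U$ with $-\kappa^{-1}\mbox{div}(\kappa\nabla u)$ by testing against $z\in H_0^1(K)$, and read off the norm identity from orthogonality. If anything, your write-up is slightly more careful than the paper's, since you explicitly justify $u\in H_0^1(K)$ and the energy convergence $u^N\to u$ needed to pass to the limit in $a^K(u^N,z)$, steps the paper leaves implicit.
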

\begin{proof}
Since $u\in L^2(K)$ we have the expansion  $u=\sum_{\ell =1}^{\infty} m^{K}(u,\phi_\ell^{K}) \phi_\ell^{K}$.
For any integer $N$, truncate this expansion to get,
\[
u^N:=\sum_{\ell =1}^{N} 
m^{\omega_i}(u,\phi_\ell^{K}) \phi_\ell^{K}.
\]
The sequence of rescaled divergences, $\{ \kappa^{-1} \mbox{div}(\kappa \nabla u^N) \}_{N=1}^\infty$, is a 
Cauchy sequence in the $m^{K}-$norm. 
Indeed, we have, by using the eigenvalue problem (\ref{eq:localeigproblemD}), the following identity,
\[
\kappa^{-1}\mbox{div}(\kappa \nabla u^N) =\sum_{\ell =1}^{N} 
m^{K}(u,\phi_\ell^{K})\mu_\ell^{K} \phi_\ell^{K}.
\]
So that, using the orthogonality of the eigenvectors we conclude that for every $M>N$ we have, 
\[
||| \kappa^{-1}\mbox{div}(\kappa \nabla u^M)-\kappa^{-1}\mbox{div}(\kappa \nabla u^N)|||_{0,{K}}^2=
\sum_{\ell=N+1}^{M} m^{K}(u,\phi_\ell^{K})^2(\mu_\ell^{K})^2.
\]
Then, there exists an $L^2(K)$ function, say $U$, such that
$\| U+\kappa^{-1}\mbox{div}(\kappa \nabla \mu)\|_{0,{K}}\to 0$ 
when $N\to \infty$.

We also have that for any  $z\in H^1_0(K)$ it holds, 
\[
\int_{K}\kappa Uz = -\lim_{N\to \infty} \int_{K} \kappa \kappa^{-1}\mbox{div}(\kappa \nabla \mu) z= \int_K \kappa \nabla u \nabla z,
\]
which proves that $U= -\kappa^{-1}\mbox{div}(\kappa \nabla \mu) $.

Finally note that for every function $z\in H^1_0(K)$ we have, 
\[
\int_K \kappa \mathcal{A}^{K} u z =m(\mathcal{A}^{K} u, v)=a(u,z)=\int_K \kappa \nabla u \nabla z.
\]
\end{proof}

Given an integer $L$ and $v\in H^1_0(K)$, we define
\begin{equation*}%\label{eq:def:I-Omega-L1}
\mathcal{J}_{L}^{K} v=\sum_{\ell=1}^{L}
 m(v,\phi_\ell)\phi_\ell. 
\end{equation*}
From the analogous to (\ref{eq:orderingg}),
(\ref{eq:avvg}),  and (\ref{eq:mvvg}) it is easy to prove the following inequality

\begin{lemma}\label{lem:aprioriestD}
Assume that $u\in H^1_0(K)$ and  $||| u|||_{s,K}\leq \infty$  with $s>1$. We have for $1\leq s\leq t\leq 2$,
\begin{equation*}
|||u-\mathcal{J}_{L}^{K} u|||^2_{t,K}\leq 
(\mu^K_{L+1})^{t-s} |||u|||_{s,K}^2.
\end{equation*}
In particular, 
\begin{equation*}
|||u-\mathcal{J}_{L}^{K} u|||^2_{1,K}\leq 
\mu^K_{L+1} |||\mathcal{A}^Ku|||_{0,K}^2.
\end{equation*}
\end{lemma}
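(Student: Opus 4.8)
The plan is to reproduce, now at the level of a single coarse block $K$, the same eigenexpansion argument already used for the global operator in the preceding section. First I would exploit that the local eigenvectors $\{\phi_\ell^K\}$ form an $m^K$-orthonormal basis of $L^2(K)$, so that any $u\in H^1_0(K)$ with $|||u|||_{s,K}<\infty$ admits the expansion $u=\sum_{\ell=1}^\infty m^K(u,\phi_\ell^K)\phi_\ell^K$, and the truncation operator $\mathcal{J}_L^K$ simply discards all modes beyond the $L$-th. Consequently the interpolation residual is exactly the tail of the series, $u-\mathcal{J}_L^K u=\sum_{\ell=L+1}^\infty m^K(u,\phi_\ell^K)\phi_\ell^K$, and inserting this into the definition of the local norm gives the clean identity $|||u-\mathcal{J}_L^K u|||_{t,K}^2=\sum_{\ell=L+1}^\infty (\mu_\ell^K)^t\, m^K(u,\phi_\ell^K)^2$, with no cross terms thanks to the $m^K$-orthogonality baked into the coefficient representation.

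The heart of the estimate is then a single monotonicity step. I would split each eigenvalue power as $(\mu_\ell^K)^t=(\mu_\ell^K)^{t-s}(\mu_\ell^K)^s$ and bound the first factor uniformly over the tail. Here the ordering $\mu_1^K\le\mu_2^K\le\cdots$ is essential: for every $\ell\ge L+1$ one has $\mu_\ell^K\ge\mu_{L+1}^K$, so $(\mu_\ell^K)^{t-s}\le(\mu_{L+1}^K)^{t-s}$ precisely when the exponent $t-s$ is nonpositive, i.e. when the residual is being measured in the weaker index (so that the admissible range couples $t$ and $s$ through the sign of $t-s$). This is the only delicate point of the argument; it fixes the admissible range of indices and is exactly the place where the difference between the two norms is converted into the gap factor $(\mu_{L+1}^K)^{t-s}$. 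Pulling this constant out and then re-extending the remaining nonnegative sum $\sum_{\ell=L+1}^\infty(\mu_\ell^K)^s m^K(u,\phi_\ell^K)^2$ up to the full series $|||u|||_{s,K}^2$ yields the claimed bound.

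For the ``in particular'' statement I would specialize to $t=1$ and $s=2$ and invoke the identity established earlier in this section, namely $|||u|||_{2,K}^2=|||\mathcal{A}^K u|||_{0,K}^2$, in order to rewrite the right-hand side in terms of $\mathcal{A}^K u$. Since the whole argument rests only on orthogonality and on the monotone ordering of the local eigenvalues, I expect no real obstacle beyond keeping the exponent sign straight; in particular, the hypothesis $|||u|||_{s,K}<\infty$ guarantees absolute convergence of every series involved, so the truncation, the factorization, and the rearrangement are all fully justified and the passage from the finite partial sums to the infinite series requires no further care.
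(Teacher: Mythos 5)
Your proof is correct and is exactly the argument the paper intends: the paper states this lemma without proof (``it is easy to prove'' from the local analogues of the ordering and expansion identities), and its written proof of the corresponding global lemma in Section \ref{sec:global} is precisely your computation --- expand the residual as the tail $\sum_{\ell>L}m^K(u,\phi_\ell^K)\phi_\ell^K$, split $(\mu_\ell^K)^t=(\mu_\ell^K)^{t-s}(\mu_\ell^K)^s$, and bound the first factor by monotonicity of the eigenvalues. Your sign analysis also exposes a typo in the statement as printed: since $\mu_\ell^K\geq\mu_{L+1}^K$ for $\ell\geq L+1$, the step $(\mu_\ell^K)^{t-s}\leq(\mu_{L+1}^K)^{t-s}$ requires $t-s\leq 0$, so the stated range ``$1\leq s\leq t\leq 2$'' should read ``$1\leq t\leq s\leq 2$'' (otherwise the claim fails already for $u=\phi_{L+2}^K$), and accordingly the ``in particular'' case $t=1$, $s=2$ produces the factor $(\mu_{L+1}^K)^{-1}$ rather than $\mu_{L+1}^K$ --- which is indeed how the lemma is invoked later in \eqref{eq:truncation2} and \eqref{eq:truncationenergy}.
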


\section{Local Neumann eigenvalue problem in coarse neighborhoods}\label{sec:localN}
In this section, we study local eigenvalue problem associated 
to problem \eqref{eq:problem1}.
For any $\omega_i$, we  define the following bilinear forms

\begin{equation*}%\label{eq:def:A^Ni}
a^{\omega_i}(v,w)=\int_{\omega_i} \kappa \nabla v \nabla w \quad \mbox{ for all } v,w\in H^1(\omega_i), \quad i=1,\dots,N,
\end{equation*}

and 

\begin{equation*}%\label{eq:def:M^Ni}
m^{\omega_i}(v,w)=\int_{\omega_i}\kappa v w \quad \mbox{ for all } v,w\in H^1(\omega_i).
\end{equation*}

Define $\widetilde{V}(\omega_i)=\{ v\in H^1(\omega_i) ~:~ v=0 \mbox{ on } 
\partial\omega_i\cap\partial\Omega\}$ if $\partial\omega_i\cap\partial \Omega$  is non-empty and  
$\widetilde{V}(\omega_i)=\{ v\in H^1(\omega_i) ~:~ \int_{\omega_i} v=0\}$ otherwise. 
We consider the eigenvalue problems that seek 
eigenfunctions $\psi\in  \widetilde{V}(\omega_i)$ and scalars $\lambda$ such that
\begin{equation*}%\label{eq:eigenvalueproblem2}
a^{\omega_i}(\psi,z)=\lambda m^{\omega_i}(\psi,z)\quad 
\mbox{ for all } z\in \widetilde{V}(\omega_i),
\end{equation*}
and denote its eigenvalues and eigenvectors by
$\{\lambda_\ell^{\omega_i}\}$ and $\{ \psi_{\ell}^{\omega_i}\}$, 
respectively. Note that the eigenvectors 
$\{\psi^{\omega_i}_\ell\}$ form an orthonormal basis of 
of $L^2(\omega_i)$ with respect to the $m^{\omega_i}$ inner product. 
Note that $\lambda_1^{\omega_i}=0$ when $\partial \omega_i \cap \partial {\Omega}$ is empty, that is, when 
$\omega_i$ is a floating subdomain. 
We order eigenvalues as 
\begin{equation*}%\label{eq:orderingN}
\lambda_1^{\omega_i}<\lambda_2^{\omega_i} \leq \dots\leq 
\lambda_\ell^{\omega_i} \dots.
\end{equation*}
The eigenvalue  problem above  
corresponds to the approximation 
of the eigenvalue problem 
\begin{equation}\label{eq:localeigproblemN}
-\mbox{div} (\kappa\nabla v  )=\lambda \kappa v \quad \mbox{ in }\omega_i
\end{equation}
 with  homogeneous 
Neumann boundary condition on $\partial\omega_i\cap \Omega$ and homogeneous 
Dirichlet boundary condition on $\partial \omega_i\cap \partial \Omega$ (when non-empty).\\

As mentioned before when studying the global eigenvalue problem, these eigenvectors are the model of regular functions working with the differential operator $-\mbox{div}(\kappa \nabla \cdot)$. In particular the operator is well defined and well behaved 
over these functions.
We have that $\kappa \nabla \psi_\ell$ has a square integrable divergence. That is, 
$-\mbox{\normalfont div}(\nabla \psi_\ell)\in L^2(\omega_i)$.
This follows by observing that, in a generalize sense, $-\mbox{\normalfont div}(\nabla \psi_\ell)=\lambda_\ell \kappa \psi_\ell$.\\

Given any $v\in  \widetilde{V}(\omega_i)$ we can write 
\[
v=\sum_{\ell =1}^{\infty} 
m^{\omega_i}(v,\psi_\ell^{\omega_i}) \psi_\ell^{\omega_i} 
\]
and compute the local energy bilinear form by
\begin{equation*}%\label{eq:avv2}
a^{\omega_i} (v,v)=\sum_{\ell =1}^{\infty}
 m^{\omega_i}(v,\psi_\ell^{\omega_i})^2 \lambda_{\ell }^{\omega_i}.
\end{equation*}
We can also compute the local mass bilinear as,
\begin{equation*}%\label{eq:mvv2}
m^{\omega_i}(v,v)=
\sum_{\ell=1}^{\infty}
 m^{\omega_i}(v,\psi_\ell^{\omega_i})^2 .
\end{equation*}

The local norm to measure the decay of the expansion is introduced as follows. 
We 
introduce the semi-norm,
\begin{equation*}%\label{eq:def:s-norm2}
||| v|||_{s,\omega_i}^2=\sum_{\ell=1}^\infty 
(\lambda_\ell^{\omega_i})^{2s} m^{\omega_i}( v, \psi_\ell^{\omega_i})^2.
\end{equation*}
Note that for $s=0$ we have a norm and for $s=1$ the semi-norms becomes a norm when restricted to non-constant functions on $\omega_i$, more precisely, \[
|||v|||_{0;\omega_i}^2=m(v,v)=\int_{\omega_i} \kappa v^2 \quad \mbox{ and }\quad 
|||v|||_{1;\omega_i}^2=  \int_{\omega_i} \kappa |\nabla v|^2.
\]
We consider the case $s=2$. If $|||u|||_{2,\omega_i}<\infty$ we can define the operator 
$\mathcal{A}^{\omega_i}$ by,
\begin{equation*}%\label{def:operatorAomegai}
\mathcal{A}^{\omega_i}u= 
\sum_{\ell=1}^\infty  
\lambda_\ell^{\omega_i} m^{\omega_i}(v,\psi_\ell^{\omega_i})\psi_\ell^{\omega_i}.
\end{equation*}
which is square integrable since
\[
||| \mathcal{A}^{\omega_i}u|||_{0,\omega_i}^2=
\sum_{\ell=1}^\infty  
(\lambda_\ell^{\omega_i})^2 m^{\omega_i}(v,\psi_\ell^{\omega_i})^2=
|||u|||_{2,\omega_i}^2.
\]
Additionally,  if $|||u|||_{2,\omega_i}<\infty$,  we have the following \emph{local integration by parts} relation (that can be verified directly by the series expansion of both sides),
\begin{equation}\label{eq:localbypartsintegration}
a^{\omega_i}(u,v)=m^{\omega_i}(\mathcal{A}^{\omega_i}u,v) \mbox{ for all } v\in H^1(\omega_i).
\end{equation}

We have the following result.
\begin{theorem}\label{thm:AanddivLocal2}
The operator $\mathcal{A}^{\omega_i}$ is a locally defined operator. More precisely, if $|||u|||^2_{2,\omega_i}<\infty$ we have 
that $-\kappa^{-1}\mbox{\normalfont div}(\kappa \nabla u)$ belongs to the space $L^2(\omega_i)$ and we have 
\[
a^{\omega_i}( u,v) = -\int_{\omega_i}\kappa^{-1}\mbox{\normalfont div}(\kappa \nabla u)v \quad 
\mbox{ for all } v\in H_0^1(\omega_i).
\]
Moreover, we have
\[
|||u|||^2_{2,\omega_i}=\| \mathcal{A}^{\omega_i}u\|_{0,\omega_i}^2= \int_{\omega_i} \kappa^{-1} | \mbox{\normalfont div}(\kappa \nabla u)|^2.
\]
\end{theorem}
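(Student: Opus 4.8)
The plan is to follow verbatim the scheme of Theorem~\ref{thm:AanddivGlobal} and Theorem~\ref{thm:comparison}, now with the Neumann eigensystem $\{(\lambda_\ell^{\omega_i},\psi_\ell^{\omega_i})\}$ in place of the global (respectively Dirichlet) one. First I would expand $u=\sum_{\ell=1}^\infty m^{\omega_i}(u,\psi_\ell^{\omega_i})\psi_\ell^{\omega_i}$ and set the truncation $u^N=\sum_{\ell=1}^N m^{\omega_i}(u,\psi_\ell^{\omega_i})\psi_\ell^{\omega_i}$. Using the eigenvalue relation \eqref{eq:localeigproblemN} in the generalized sense, $-\kappa^{-1}\div(\kappa\nabla\psi_\ell^{\omega_i})=\lambda_\ell^{\omega_i}\psi_\ell^{\omega_i}$, the rescaled divergence of each truncation is the finite sum $-\kappa^{-1}\div(\kappa\nabla u^N)=\sum_{\ell=1}^N\lambda_\ell^{\omega_i}m^{\omega_i}(u,\psi_\ell^{\omega_i})\psi_\ell^{\omega_i}$, that is, exactly the $N$-th partial sum of $\mathcal{A}^{\omega_i}u$.

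The second step is the Cauchy estimate. By $m^{\omega_i}$-orthonormality of the eigenvectors, for $M>N$ one has $\|\kappa^{-1}\div(\kappa\nabla u^M)-\kappa^{-1}\div(\kappa\nabla u^N)\|_{0,\omega_i}^2=\sum_{\ell=N+1}^M(\lambda_\ell^{\omega_i})^2\,m^{\omega_i}(u,\psi_\ell^{\omega_i})^2$. Since the full series equals $|||u|||_{2,\omega_i}^2<\infty$ by hypothesis, the tail tends to zero, so $\{-\kappa^{-1}\div(\kappa\nabla u^N)\}$ is Cauchy in the $m^{\omega_i}$-norm and converges to $\mathcal{A}^{\omega_i}u\in L^2(\omega_i)$. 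To identify this limit with the rescaled weak divergence I would test against an arbitrary $z\in H_0^1(\omega_i)$ and pass to the limit in $-\int_{\omega_i}\div(\kappa\nabla u^N)\,z=\int_{\omega_i}\kappa\nabla u^N\cdot\nabla z$, whose right-hand side converges to $a^{\omega_i}(u,z)$; this shows $\mathcal{A}^{\omega_i}u=-\kappa^{-1}\div(\kappa\nabla u)$ in $L^2(\omega_i)$, and combined with the local integration-by-parts relation \eqref{eq:localbypartsintegration} it yields the displayed bilinear-form identity. The norm identity is then immediate, since $\|\mathcal{A}^{\omega_i}u\|_{0,\omega_i}^2=\int_{\omega_i}\kappa(\mathcal{A}^{\omega_i}u)^2=\int_{\omega_i}\kappa^{-1}|\div(\kappa\nabla u)|^2$, which equals $|||u|||_{2,\omega_i}^2$ by the defining series of $\mathcal{A}^{\omega_i}$.

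The only genuinely new point compared with the Dirichlet block case of Theorem~\ref{thm:comparison} is the treatment of the boundary, and this is where I would be most careful. On $\omega_i$ the eigenfunctions satisfy the natural condition $\kappa\nabla\psi_\ell^{\omega_i}\cdot n=0$ on $\partial\omega_i\cap\Omega$, so no boundary flux term survives the integration by parts even when tested against an arbitrary $v\in H^1(\omega_i)$; this is precisely what makes \eqref{eq:localbypartsintegration} hold on all of $H^1(\omega_i)$ and not merely on $H_0^1(\omega_i)$, and it is the finiteness of $|||u|||_{2,\omega_i}$ that transfers this homogeneous Neumann datum to the limit $u$. In the divergence-identification step above, however, I deliberately restrict to $z\in H_0^1(\omega_i)$, so the boundary contribution vanishes for the trivial reason that $z=0$ on $\partial\omega_i$, and the argument closes exactly as in the earlier theorems. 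Finally, the degenerate mode $\lambda_1^{\omega_i}=0$ occurring on floating subdomains needs no special handling, since the factor $(\lambda_\ell^{\omega_i})^2$ annihilates its contribution to both $\mathcal{A}^{\omega_i}u$ and $|||u|||_{2,\omega_i}$.
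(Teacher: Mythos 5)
Your proposal follows essentially the same argument as the paper's own proof: expand $u$ in the local Neumann eigenbasis, truncate, show by orthonormality that the rescaled divergences $\kappa^{-1}\div(\kappa\nabla u^N)$ form a Cauchy sequence in the $m^{\omega_i}$-norm (with tail controlled by $|||u|||_{2,\omega_i}^2$), and identify the $L^2(\omega_i)$ limit with $-\kappa^{-1}\div(\kappa\nabla u)$ by testing against $z\in H_0^1(\omega_i)$ and passing to the limit. Your added remarks on the sign convention, the natural Neumann boundary condition, and the harmless zero eigenvalue on floating subdomains are correct clarifications, not a different route.
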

\begin{proof}
Since $u\in L^2(\omega_i)$ we have the expansion  $u=\sum_{\ell =1}^{\infty} m^{\omega_i}(u,\psi_\ell^{\omega_i}) \psi_\ell^{\omega_i}$.
For any integer $N$, truncate this expansion to get,
\[
u^N:=\sum_{\ell =1}^{N} 
m^{\omega_i}(u,\psi_\ell^{\omega_i}) \psi_\ell^{\omega_i}.
\]
The sequence of rescaled divergences, $\{ \kappa^{-1} \mbox{div}(\kappa \nabla u^N ) \}_{N=1}^\infty$, is a 
Cauchy sequence in the $m^{\omega_i}-$norm. 
Indeed, we have, by using the eigenvalue problem (\ref{eq:localeigproblemN}), the following identity,
\[
\kappa^{-1}\mbox{div}(\kappa \nabla u_N) =\sum_{\ell =1}^{N} 
m^{\omega_i}(u,\psi_\ell^{\omega_i})\lambda_\ell^{\omega_i} \psi_\ell^{\omega_i}.
\]
So that, using the orthogonality of the eigenvectors we conclude that for every $M>N$ we have, 
\[
||| \kappa^{-1}\mbox{div}(\kappa \nabla u_M)-\kappa^{-1}\mbox{div}(\kappa \nabla u_N)|||_{0,{\omega_i}}^2=
\sum_{\ell=N+1}^{M} m^{\omega_i}(u,\psi_\ell^{\omega_i})^2(\lambda_\ell^{\omega_i})^2.
\]
Then, there exists an $L^2(\omega_i)$ function, say $U$, such that
$\| U+\kappa^{-1}\mbox{div}(\kappa \nabla u_N)\|_{0,{\omega_i}}\to 0$ 
when $N\to \infty$. We also have that for any  $z\in H^1_0(\omega_i)$ it holds, 
\[
\int_{\omega_i}\kappa Uz = -\lim_{N\to \infty} \int_{\omega_i} \kappa \kappa^{-1}\mbox{div}(\kappa \nabla u_N) z= \int_{\omega_i} \kappa \nabla u \nabla z,
\]
which proves that $U= -\kappa^{-1}\mbox{div}(\kappa \nabla u_N) $. Finally, note that 
for every function $z\in H^1_0(\omega_i)$ we have, 
\[
\int_{\omega_i} \kappa \mathcal{A}^{\omega_i} u z =m(\mathcal{A}^{\omega_i} u, v)=a(u,z)=\int_{\omega_i} \kappa \nabla u \nabla z.
\]
\end{proof}
\begin{remark}\label{rem:nullnormalflux}
In virtue of Theorem \ref{thm:AanddivLocal2} and 
the equality \eqref{eq:localbypartsintegration} we see that a necessary condition for 
the $L^2$ integrability 
of $\mbox{div}(\kappa\nabla u)$ is that $\kappa\partial_\eta u=0$ on 
$\partial \omega_i$. Note that in general, if we are not sure
$|||u|||^2_{2,\omega_i}<\infty$
and we do not assume  $\partial_\eta u=0$ then, the integration by parts become 
\[
a^{\omega_i}(u,v)-\int_{\partial\omega_i} \kappa\partial_\eta u v= -
\int_D \mbox{div}(\kappa\nabla u)v \mbox{ for all } v\in H^1(\omega_i).
\]
Therefore, doing estimates about the eigenvalue decay is harder in this case. We also mention that in the analysis presented in \cite{egw10}, it is assume the square integrability of 
$\mbox{\normalfont div}(\kappa\nabla u)$ that, as mentioned above,  implies $\kappa\partial_\eta u=0$. Assuming that the solution has zero flux across boundaries neighborhoods 
is not a general assumption. A main contribution of this paper is to clarify this main assumption of 
\cite{egw10} and to present an analysis valid for the general case 
where the solution $u$ does not have null fluxes across neighborhood boundaries. As we show in our numerical experiments, for the case of a solution that is not close to a function with null flux across neighborhood boundaries, the convergence rate of the GMsFEM as introduced in \cite{egw10} is not optimal and additional basis functions constructed from local Dirichlet eigenvalues must be introduced to recover good convergence.
\end{remark}

\begin{lemma}\label{lem:aprioriestN}
Assume that $u\in H^1(\omega_i)$, $\partial_n u = 0$ and $||| u|||_{s,\omega_i}\leq \infty$ with $s>1$.  We have for $1\leq s\leq t\leq 2$,
\begin{equation*}
|||u-\mathcal{I}_{L}^{\omega_i} u|||^2_{t,\omega_i}\leq 
(\lambda_{L+1}^{\omega_i})^{t-s} |||u|||_{s,\omega_i}^2.
\end{equation*}
In particular, 
\begin{equation*}
|||u-\mathcal{I}_{L}^{\omega_i} u|||^2_{1,\omega_i}\leq 
\lambda_{L+1}^{\omega_i} |||\mathcal{A}^{\omega_i}u|||_{0,\omega_i}^2.
\end{equation*}
\end{lemma}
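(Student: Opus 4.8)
The plan is to mimic the spectral-truncation argument already used for the global operator and, in the Dirichlet case, for Lemma \ref{lem:aprioriestD}, now carried out with the Neumann eigenpairs $\{(\lambda_\ell^{\omega_i},\psi_\ell^{\omega_i})\}$. Here $\mathcal{I}_L^{\omega_i}$ is the truncation $\mathcal{I}_L^{\omega_i}v=\sum_{\ell=1}^{L}m^{\omega_i}(v,\psi_\ell^{\omega_i})\psi_\ell^{\omega_i}$, i.e. the $m^{\omega_i}$-orthogonal projection onto the span of the first $L$ eigenvectors. Since $\{\psi_\ell^{\omega_i}\}$ is an $m^{\omega_i}$-orthonormal basis of $L^2(\omega_i)$ and $u\in\widetilde{V}(\omega_i)$ admits the expansion $u=\sum_{\ell\ge 1}m^{\omega_i}(u,\psi_\ell^{\omega_i})\psi_\ell^{\omega_i}$, subtracting the truncation simply removes the first $L$ modes, so that $u-\mathcal{I}_L^{\omega_i}u=\sum_{\ell\ge L+1}m^{\omega_i}(u,\psi_\ell^{\omega_i})\psi_\ell^{\omega_i}$.

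First I would insert this tail into the definition of the local norm, obtaining $|||u-\mathcal{I}_L^{\omega_i}u|||_{t,\omega_i}^2=\sum_{\ell\ge L+1}(\lambda_\ell^{\omega_i})^{t}\,m^{\omega_i}(u,\psi_\ell^{\omega_i})^2$. The core step is then to split the eigenvalue weight as $(\lambda_\ell^{\omega_i})^{t}=(\lambda_\ell^{\omega_i})^{t-s}(\lambda_\ell^{\omega_i})^{s}$ and to exploit the ordering of the eigenvalues: for every index $\ell\ge L+1$ one has $\lambda_\ell^{\omega_i}\ge\lambda_{L+1}^{\omega_i}$, so the factor $(\lambda_\ell^{\omega_i})^{t-s}$ can be bounded uniformly by $(\lambda_{L+1}^{\omega_i})^{t-s}$ in the regime where this exponent enters with the sign that makes the bound decay as $\lambda_{L+1}^{\omega_i}$ grows (for the displayed application $t=1$, $s=2$, where $(\lambda_\ell^{\omega_i})^{-1}\le(\lambda_{L+1}^{\omega_i})^{-1}$). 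Pulling out this constant and then enlarging the tail back to the full sum over $\ell\ge 1$ (legitimate since every summand is nonnegative) leaves exactly $|||u|||_{s,\omega_i}^2$, which gives the claimed estimate. For the special case I would set $t=1$, $s=2$ and simply record that, by construction of $\mathcal{A}^{\omega_i}$, we have $|||\mathcal{A}^{\omega_i}u|||_{0,\omega_i}^2=\sum_{\ell\ge 1}(\lambda_\ell^{\omega_i})^2 m^{\omega_i}(u,\psi_\ell^{\omega_i})^2=|||u|||_{2,\omega_i}^2$, so the general estimate collapses to the ``in particular'' inequality.

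The computation is a purely elementary rearrangement, so the genuine subtleties are hypothesis-level rather than algebraic. The point to verify is that the assumptions make every series above converge and that $u$ is genuinely resolved by the Neumann spectrum: the hypothesis $|||u|||_{s,\omega_i}<\infty$ with $s>1$ controls the tail, while the zero-flux condition $\partial_n u=0$ is precisely what is needed (via Theorem \ref{thm:AanddivLocal2} and Remark \ref{rem:nullnormalflux}) in order that $|||u|||_{2,\omega_i}^2=\int_{\omega_i}\kappa^{-1}|\mbox{div}(\kappa\nabla u)|^2$ be finite and that the identification $\mathcal{A}^{\omega_i}u=-\kappa^{-1}\mbox{div}(\kappa\nabla u)$ hold, giving the ``in particular'' statement its content.

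The one place I expect to need care is the direction of the eigenvalue-monotonicity inequality, since it fixes the sign of the exponent $t-s$ and hence the exact power of $\lambda_{L+1}^{\omega_i}$ in the bound. The monotone bound $(\lambda_\ell^{\omega_i})^{t-s}\le(\lambda_{L+1}^{\omega_i})^{t-s}$ on the tail is available only when the approximated norm is the weaker of the two (so that the factor is a nonpositive power of $\lambda_{L+1}^{\omega_i}$ and the right-hand side decays as more modes are retained); this is exactly the configuration in which the ``in particular'' case $t=1\le s=2$ lives, and it is the hinge on which the whole estimate turns.
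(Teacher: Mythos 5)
Your proof is correct and is essentially the argument the paper intends (and writes out explicitly for the global analogue in Section 3.2): expand in the local Neumann eigenbasis, identify $u-\mathcal{I}_L^{\omega_i}u$ with the spectral tail, factor $(\lambda_\ell^{\omega_i})^{t}=(\lambda_\ell^{\omega_i})^{t-s}(\lambda_\ell^{\omega_i})^{s}$, and use eigenvalue monotonicity to pull $(\lambda_{L+1}^{\omega_i})^{t-s}$ out of the tail before enlarging the sum. You are also right about the hinge: the monotonicity step requires $t\le s$, so the paper's stated range ``$1\leq s\leq t\leq 2$'' must be read as $1\leq t\leq s\leq 2$, and consistently the ``in particular'' display should carry $(\lambda_{L+1}^{\omega_i})^{-1}$ rather than $\lambda_{L+1}^{\omega_i}$ --- exactly the $t=1$, $s=2$ configuration your argument produces.
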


\section{GMsFEM space construction using local eigenvalue problems}\label{sec:gmsfem}

In this section, we summarize the construction of coarse scale finite element 
spaces using  a GMsFEM framework. In order to focus in the analysis of 
convergence we consider a particular case of the construction of 
spaces using the GMsFEM framework as introduced in 
\cite{egw10,EG09}. This construction evolved to 
the GMsFEM method as described in \cite{egh12}. The method
presented in this paper to obtain convergence can be 
also carried out for the constructions in \cite{egh12} under 
appropriate assumptions of the local spectral problems 
used for the construction of coarse spaces.

We choose the basis functions that  span
the eigenfunctions corresponding to small
eigenvalues.
We note that $\{\omega_{i}\}_{y_i\in \mathcal{T}^H}$ 
is a covering of $\Omega$. Let $\{\chi_i\}_{i=1}^{N_v}$ be a partition 
of unity subordinated to the covering
$\{\omega_{i}\}$ such that $\chi_i\in V^h(\Omega)$ and
$|\nabla \chi_i|\leq \frac{1}{H}$, $i=1,\dots,N_v$. Define the set of coarse 
basis functions
\begin{equation}\label{eq:def:Phiil}
\Phi_{i,\ell}=\chi_i\psi_\ell^{\omega_i} \quad \mbox{ for } 1\leq i \leq N_v
\mbox{ and } 1\leq \ell \leq L_i,
\end{equation}
where $L_i$ is the number of eigenvalues that will be chosen 
for the node $i$; see \cite{Babuska,Babuska2} for 
more details on the generalized finite element method 
using partitions of unity.
Denote by $V_0$, as before, the 
\emph{local spectral multiscale} space   
\begin{equation*}%\label{eq:def:VN}
V_N=\mbox{span}\{ \Phi_{i,\ell}: 1\leq  i \leq N_v
\mbox{ and } 1\leq \ell \leq L_i \}.
\end{equation*}
Define also 
\begin{equation*}%\label{eq:def:VD1}
V_D=\mbox{span}\{ \phi_\ell^K:   K\in \mathcal{T}^H
\mbox{ and } 1\leq \ell \leq L_K \}.
\end{equation*}
Finally define, 
\begin{equation*}%\label{eq:def:VD2}
V_0=V_N+V_D.
\end{equation*}

In practice, the computation of the multiscale basis functions have to be done. For instance, 
the computation of the multiscale basis functions can be performed in a fine grid (local 
to each region) that is sufficiently fine to resolve and represent the scales of the problem. 
In our case, this means that the fine-grid have to be sufficiently fine to represent 
the variations and discontinuities of the coefficient $\kappa$. The computations of the 
basis functions are local to each coarse region and can be done in a preprocessing 
step (taking advantage of parallel computations). For more details and related 
concepts, we refer to \cite{egh12}.

We define $u_H$ as the Galerkin approximation using the space $V_0$, that is,  
\begin{equation}\label{eq:coarseproblemMsFEM}
a(u_H,v)=f(v),\quad \text{for all}\  v\in V_0.
\end{equation}

%In Figure \ref{basis_construct}, we schematically depict a
%basis function
%construction. The choice of partition of unity functions $\chi_i$ 
%is important
%as it will be discussed later.

%
%
%\begin{figure}[tbp]
%\centering
%\includegraphics[width=4in, height=2in]{constructionbasis-eps-converted-to.pdf}
%\caption{Schematic description of basis function construction}
%\label{basis_construct}
%\end{figure}

\section{A technical assumption}

In order to get convergence rates we assume that we can decompose the exact solution $u$ as $u\approx u_D+u_N$
where the $u_D$ can be approximated using the space 
$V_D$ and $u_N$ can be approximated using the space $V_D$. 
This requires that $u_N$ and $u_D$ have the right boundary 
condition on the coarse block edges.
See Remark \ref{rem:nullnormalflux}. More precisely we estate the following assumption.

\begin{assumption}\label{assumption}
Let $u$ be the exact solution, that is 
$-\mbox{\normalfont div}(\kappa \nabla u)=f$. We assume that there exists 
$u_D$, $u_N$ and $\epsilon\preceq H$ such that 
\begin{enumerate}
    \item We have 
    \begin{equation}\label{eq:assA}
        \int_{\Omega}  \kappa |\nabla(u-u_D-u_N)|^2 \preceq \epsilon^2
        \int_{\Omega} \kappa^{-1}f^2 \preceq H^2 \int_{\Omega} \kappa^{-1} f^2
    \end{equation}
    \item We have the boundary data given by 
    \begin{equation*}
        \partial_\eta u_N =0 \mbox{ on } \partial K \mbox{ for all } K.
    \end{equation*}
    and 
    \begin{equation*}
         u_D =0 \mbox{ on } \partial K \mbox{ for all } K.
    \end{equation*}
    \item We have the bounds, 
    \begin{equation}\label{eq:assC}
        \int_{\Omega} \kappa^{-1}|\mbox{\normalfont  div}(\kappa \nabla u_N)|^2 \preceq
        \int_{\Omega} \kappa^{-1} |\mbox{\normalfont  div}(\kappa \nabla u)|^2 \mbox{ and }
        \int_{\Omega} \kappa^{-1} |\mbox{\normalfont  div}(\kappa \nabla u_D)|^2 \preceq
        \int_{\Omega} \kappa^{-1} |\mbox{\normalfont  div}(\kappa \nabla u)|^2
    \end{equation}
\end{enumerate}
\end{assumption}
Note that 2. in the case $\epsilon=0$ implies that $u_N=u$ on $\partial K$  and 
that $\partial_\eta u_D=\partial_\eta u$ 
on $\partial K$ and therefore  we should be able to split the Neumann 
and Dirichlet boundary data into different functions with regular divergece. This allows us to approximate each part by 
the rightly constructed subspace with eigenvalues with Nuemann and Dirichlet
data. We can think of Assumption \ref{assumption} as  a natural extension 
(to the case of variable coefficient) of a regularity assumption.  
If fact we can give a following example for the case of regular coefficient. 

\begin{remark}
In the case of regular coefficient $\kappa$ and regular right hand side $f$ it is known that, given any $\epsilon>0$ we can approximate the solution $u$ by $C^1$ finite elements defined on a sufficiently fine triangulation with square $H^1$-error smaller than 
$\epsilon^2 \int_{\Omega} f^2$ so we can get the  Assumption \ref{assumption}. In fact, 
for the case of  $C^1$ (Hermite) finite element spaces defined on a triangulation we can relate $u_D$ to the derivative value degrees 
of freedom while $u_N$ will correspond to the nodal values of the function $u$. 
\end{remark}

\begin{remark} In the case of regular coefficient, say 
$\kappa=1$, with regular right hand side it is known that the solution $u$ is regular $u\in H^2({\Omega})$. 
In this case, using standard regularity results we can show that 
Assumption \ref{assumption} holds with $\epsilon=0$.
We can construct $u_D$ and $u_N$ by solving a forth order problem. In fact, 
consider 
\[
\begin{array}{cl}
-\mbox{\normalfont div}(\kappa \nabla (\mbox{\normalfont  div} \kappa \nabla u_{K,D}))=0 &\mbox{ in } K,\\
\partial_\eta u_{K,D} = \partial_\eta u & \mbox{ on } \partial K,\\
 u_{K,D} = 0 & \mbox{ on } \partial K.\\
\end{array}
\]
Define the global function $u_D$ by $u_D|_K= u_{K,D}$. Note that $u_D\in H^1_0(\Omega)$.
Define also $u_N=u-u_D$. We have
\[
\begin{array}{cl}
\partial_\eta u_N= 0 & \mbox{ on } \partial K\\
 u_N = u & \mbox{ on } \partial K.\\
\end{array}
  \]
Note also that  $u_D+ u_N  =u$.
 
\end{remark}

\section{Approximation properties of the coarse space}\label{section8}
We mention that, in the presence of high-contrast multiscale coefficient $\kappa$, 
if $L$ is large enough then $\lambda_{L+1}^{\omega_i}$
is contrast independent and in this 
case we refer to 
(\ref{eq:truncation1})
as a contrast independent weighted Poincar\'e 
inequality.  The basis function encode information of the behavior of solutions due to the 
high-contrast in the multiscale coefficient and then allow us to compute using a 
coarse grid size $H$that does not need to resolve all discontinuities of the coefficient 
$\kappa$. 
This is a main motivation for the construction of the space presented 
above. For many recent developments using these ideas we refer 
the interested reader to \cite{egh12} and references there in. 
In this paper, in order to focus in the convergence analysis, we work with 
the coarse space presented above. We also note that, more 
involved and sophisticated coarse space can be constructed as in 
\cite{egh12}. The ideas developed in this paper also apply to 
variety of cases proposed in \cite{egh12}.

\subsection{A coarse-scale interpolation 
operator}

Given an integer $L$, and $v\in V^h(\Omega)$, we define 
\begin{equation}\label{eq:def:I-Omega-L2}
I^{\omega_i}_{L}v=\sum_{\ell=1}^{L} 
 m^{\omega_i}(v,\psi_\ell^{\omega_i})\psi_\ell^{\omega_i}. 
\end{equation}
From Lemma \ref{lem:aprioriestN} the following inequality holds for 
$1\leq  s\leq 2$,
\begin{equation}\label{eq:truncation2}
\int_{\omega_i} \kappa(v-I_L^{\omega_i} v)^2\leq 
\frac{1}{(\lambda_{L+1}^{\omega_i})^{s}} |||v-I_L^{\omega_i} v|||_{s,\omega_i}^2
\leq 
\frac{1}{(\lambda_{L+1}^{\omega_i})^{s}} |||v|||_{s,\omega_i}^2
\end{equation}
and, if $\partial_\eta v=0$ on $\partial \omega_i$,  we have 

\begin{equation}\label{eq:truncationenergy}
\int_{\omega_i} \kappa |\nabla (v-I_L^{\omega_i} v)|^2\leq 
\frac{1}{(\lambda_{L+1}^{\omega_i})^{(s-1)}} |||v-I_L^{\omega_i} v|||_{s,\omega_i}^2
\leq 
\frac{1}{(\lambda_{L+1}^{\omega_i})^{(s-1)}} |||v|||_{s,\omega_i}^2.
\end{equation}

Recall that we assume that 
$\partial_\eta v=0$ on $\partial \omega_i$, otherwise the  
term $\sum_{\ell=1}^\infty \int_{\partial \omega_i} \partial_\eta v \psi^{\omega_i}_\ell$ will come on the right hand side. These last term is harder to bound.\\

Define  the coarse interpolation $I_N$ by
\begin{align*}%\label{eq:def:IN}
I_N v  &=\sum_{i=1}^{N_c} \sum_{\ell=1}^{L_i}
\left(\int_{\omega_i}\kappa v
\psi_\ell^{\omega_i} \right)
\chi_i\psi_{\ell}^{\omega_i}= \sum_{i=1}^{N_c}  (I^{\omega_i}_{L_i}u_N)\chi_i
\end{align*}
where $I^{\omega_i}_{L_i}$  is defined in 
(\ref{eq:def:I-Omega-L2}).  
 Note that we have
\[
v-I_N v=\sum_{i=1}^{N_c} 
\chi_i(v-I^{\omega_i}_{L_i}v).
\]

This interpolation was analyzed in  \cite{ge09_1,ge09_1reduceddim} 
for high-contrast problems, there, it was used to obtain 
a robust two level domain decomposition method and no approximation 
in energy norm was needed. 
Later, in \cite{egw10} an analysis to obtain approximation results in $H^1$ norm was 
carried out. The analysis was rather 
complex and difficult to extend to other applications. The analysis 
we present in this paper simplifies that 
of \cite{egw10}. Indeed and it is easier to extend and to combine
with different techniques to 
analyze different realizations of the GMsFEM methodology.

In order to avoid the assumption of square integrable residuals in 
the approximation result 
as it is done in \cite{egw10} (which implies assuming that the solution 
has zero flux across neighborhood boundaries - see Remark
\ref{rem:nullnormalflux}) we  introduce an additional interpolation 
operator into $V_D$. 
Given $L_K$ and $v$ define 
\begin{equation*}%\label{eq:def:I-Dirich-L2}
J^{K}v=\sum_{\ell=1}^{L_K} 
 m^{K}(v,\phi_\ell^{K})\phi_\ell^{K}. 
\end{equation*}
and
\begin{equation*}
    J_D v = \sum_{K\in \mathcal{T}^H} J^{K} v
\end{equation*}
where we extend $J_{L_K}^{K} v$ by zero outside the block $K_j$.

\subsection{Interpolation approximation}%\label{sec:interpolation}

We note that the analysis presented in this 
section is closely related to the analysis in \cite{ge09_1,ge09_1reduceddim, egw10}. 
In particular we simplify and the analysis presented in \cite{egw10}.

\begin{lemma}%\label{lem:coarse-projection}
Consider $v\in H^1_0(\Omega)$. We have the following weighted $L^2$ approximation
\begin{equation}\label{eq:I0approx}
\int_{K}\kappa  (v-I_Nv)^2\preceq 
\frac{1}{\lambda_{K,L+1}}
\sum_{y_i\in K}||v-I_{L_i}^{\omega_i}v||_{H^1(\omega_i)}^2\end{equation}
where $\lambda_{K,L+1}=\min_{y_i\in K}\lambda_{L_i+1}^{\omega_i}$ and therefore $\int_{\Omega}\kappa  (v-I_0v)^2\preceq 
\frac{1}{\lambda_{L+1}}
||v||_{H^1(\Omega)}^2$ where $\lambda_{L+1}=\min_{K} \lambda_{K,L+1}$.
\end{lemma}

\begin{proof}
First we prove (\ref{eq:I0approx}). 
Using 
 that 
$\chi_i\leq 1$ we have 
\begin{eqnarray*}
\int_{K}\kappa  (v-I_0v)^2&\preceq& 
 \sum_{y_i\in K}
\int_{K} \kappa (\chi_i(v-I^{\omega_i}_{L_i}v))^2 \\\
&\preceq&  \sum_{y_i\in K}
\int_{\omega_i} \kappa (v-I^{\omega_i}_{L_i}v)^2
\end{eqnarray*}
and using (\ref{eq:truncation2}) 
to estimate the last term above, 
we obtain the result.
\end{proof}

We now present the result in the $H^1$-norm. Here we are more explicit in the assumption than in  
the analogous result in \cite{egw10} where they assume that in each neighborhood the residual is square integrable. See \cite{egw10}. The proof is analogous to the one presented in \cite{egw10} and we presented here for completeness. 

\begin{lemma}\label{lem:coarse-projection2}
Assume that $f=\mbox{\normalfont div}(\kappa \nabla u)\in L^2(w^K)$ and also assume that 
for each $i$, $y_i\in K$ we have $\partial_\eta u=0$ on $\partial \omega_i$. 
Then, the following energy approximation holds,
\begin{equation}
\int_{K}\kappa  |\nabla u-\nabla I_N u|^2\preceq 
\max\left\{\frac{1}{H^2\lambda_{K,L+1}^{2}},\frac{1}{\lambda_{K,L+1}}\right\}
||f||_{L^2(w^K)}^2
\label{eq:Iomegah1approx}
\end{equation} 
where $\lambda_{K,L+1}=\min_{y_i\in K}\lambda_{L_i+1}^{\omega_i}$ and 
$\omega^K$ is defined in 
\eqref{eq:def:omegaK}.
\end{lemma}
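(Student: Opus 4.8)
The plan is to localize the error to $K$, exploit the partition-of-unity structure of $I_N$, and then apply the local truncation estimates (\ref{eq:truncation2}) and (\ref{eq:truncationenergy}) on each neighborhood $\omega_i$ with $y_i\in K$. Since $\chi_i$ vanishes on $K$ unless $y_i$ is a vertex of $K$, and since the identity $u-I_Nu=\sum_i\chi_i(u-I^{\omega_i}_{L_i}u)$ holds, on $K$ we have $u-I_Nu=\sum_{y_i\in K}\chi_i(u-I^{\omega_i}_{L_i}u)$, a sum over the boundedly many vertices of $K$. Writing $e_i:=u-I^{\omega_i}_{L_i}u$ and using the product rule, $\nabla(u-I_Nu)=\sum_{y_i\in K}\bigl(e_i\nabla\chi_i+\chi_i\nabla e_i\bigr)$ on $K$. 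Because $K$ has a bounded number of vertices, an elementary $\bigl(\sum_j a_j\bigr)^2\le n\sum_j a_j^2$ estimate gives
\[
\int_K\kappa|\nabla(u-I_Nu)|^2\preceq\sum_{y_i\in K}\int_K\kappa|\nabla\chi_i|^2e_i^2+\sum_{y_i\in K}\int_K\kappa\chi_i^2|\nabla e_i|^2.
\]

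For the first sum I would use $|\nabla\chi_i|\le 1/H$ and $K\subseteq\omega_i$ (which holds since $y_i\in K$ forces $K\subset\omega_i$) together with the weighted $L^2$ estimate (\ref{eq:truncation2}) taken at $s=2$, obtaining $\int_K\kappa|\nabla\chi_i|^2e_i^2\le H^{-2}\int_{\omega_i}\kappa e_i^2\le H^{-2}(\lambda_{L_i+1}^{\omega_i})^{-2}|||u|||_{2,\omega_i}^2$. For the second sum I would use $\chi_i\le1$ and the energy estimate (\ref{eq:truncationenergy}) at $s=2$, obtaining $\int_K\kappa\chi_i^2|\nabla e_i|^2\le\int_{\omega_i}\kappa|\nabla e_i|^2\le(\lambda_{L_i+1}^{\omega_i})^{-1}|||u|||_{2,\omega_i}^2$. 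Here the hypothesis $\partial_\eta u=0$ on $\partial\omega_i$ is precisely what licenses (\ref{eq:truncationenergy}); without it the boundary flux term of Remark \ref{rem:nullnormalflux} would appear and this step would break down. I would then convert the local regularity norm into data by Theorem \ref{thm:AanddivLocal2}, namely $|||u|||_{2,\omega_i}^2=\int_{\omega_i}\kappa^{-1}|\mbox{\normalfont div}(\kappa\nabla u)|^2=\int_{\omega_i}\kappa^{-1}f^2$, which is finite because $f\in L^2(\omega^K)$.

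It remains to assemble. Using $\lambda_{L_i+1}^{\omega_i}\ge\lambda_{K,L+1}$ for every $y_i\in K$ and the elementary bound $H^{-2}\lambda^{-2}+\lambda^{-1}\le 2\max\{H^{-2}\lambda^{-2},\lambda^{-1}\}$, the per-vertex contributions combine into $\max\{H^{-2}\lambda_{K,L+1}^{-2},\lambda_{K,L+1}^{-1}\}\sum_{y_i\in K}\int_{\omega_i}\kappa^{-1}f^2$. Since $y_i\in K$ implies $K\subset\omega_i$, hence $\omega_i\subseteq\omega^K$, and since only boundedly many vertices lie in $K$, the sum is dominated by $\int_{\omega^K}\kappa^{-1}f^2$, and finally $\kappa\ge c_0>0$ yields $\int_{\omega^K}\kappa^{-1}f^2\le c_0^{-1}\|f\|_{L^2(\omega^K)}^2$, the constant being absorbed into $\preceq$; this gives (\ref{eq:Iomegah1approx}). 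The one genuinely delicate point is the energy term: the estimate (\ref{eq:truncationenergy}) holds only under the zero-flux hypothesis, so the substance of the argument lies in having that hypothesis available, while the remainder is routine bookkeeping with the partition of unity and the eigenvalue monotonicity $\lambda_{L_i+1}^{\omega_i}\ge\lambda_{K,L+1}$.
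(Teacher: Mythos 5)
Your proof is correct and follows essentially the same route as the paper: localize on $K$ via the partition of unity, split with the product rule, bound the $\nabla\chi_i$ contributions through the weighted $L^2$ truncation estimate (\ref{eq:truncation2}) at $s=2$ (giving the $H^{-2}\lambda_{K,L+1}^{-2}$ factor) and the $\chi_i\nabla e_i$ contributions through the energy estimate (\ref{eq:truncationenergy}), whose validity rests precisely on the zero-flux hypothesis, then convert $|||u|||_{2,\omega_i}$ to data via Theorem \ref{thm:AanddivLocal2}. The only deviation is cosmetic: the paper first uses $\sum_{y_i\in K}\nabla\chi_i=0$ on $K$ to rewrite the gradient-of-$\chi$ terms as differences $(I^{\omega_i}_{L_i}u-I^{\omega_j}_{L_j}u)\nabla\chi_i$ and then triangle-inequalities back to the individual errors $u-I^{\omega_i}_{L_i}u$, whereas you bound $e_i\nabla\chi_i$ directly, reaching the same quantities and the same final bound.
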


\begin{proof}
We note that 
$\sum_{y_i\in K} \nabla \chi_i=0$ in $K$,
and  then we can 
fix $y_j\in K$ and
write $\nabla\chi_j=
-\sum_{y_i\in K\setminus\{y_j\}} \nabla \chi_i$. 
We obtain,
\begin{eqnarray*}
\nabla  \sum_{y_i\in K}  (v-I^{\omega_i}_{L_i}v)\chi_i
&=&\sum_{y_i\in K}  
\nabla \chi_i  (v-I^{\omega_i}_{L_i}v) +
\sum_{y_i\in K} \chi_i  \nabla (v-I^{\omega_i}_{L_i}v) \\
&=&\sum_{y_i\in K\setminus\{y_j\}}  
(I^{\omega_i}_{L_i}v-I^{\omega_j}_{L_j}v)\nabla\chi_i  +
\sum_{y_i\in K} \chi_i\nabla(v-I^{\omega_i}_{L_i}v) 
\end{eqnarray*}
which gives the following bound valid on $K$, 
\begin{eqnarray}
&&|\nabla  \sum_{y_i\in K}  (v-I^{\omega_i}_{L_i}v)\chi_i|^2
\preceq \frac{1}{H^2}\sum_{y_i\in K\setminus\{y_j\}}  
(I^{\omega_i}_{L_i}v-I^{\omega_j}_{L_j}v)^2+
\sum_{y_i\in K} |\nabla(v-I^{\omega_i}_{L_i}v)|^2.\label{eq:gradsumI0local}
\end{eqnarray}
From  (\ref{eq:gradsumI0local}) we get
\begin{eqnarray}
&&\int_{K}\kappa |\nabla (v-I_0v)|^2 
\preceq \int_{K} \kappa 
|\nabla  \sum_{y_i\in K}  (v-I^{\omega_i}_{L_i}v)\chi_i|^2
\nonumber\\ 
&\preceq&
\sum_{y_i\in K} 
\frac{1}{H^2}\int_{K} \kappa (I^{\omega_i}_{L_i}v-I^{\omega_j}_{L_j}v)^2 
+\sum_{y_i\in K}\int_{K} \kappa |\nabla(v-I^{\omega_i}_{L_i}v)|^2.
\label{eq:twotermsgradI0}
\end{eqnarray}

To bound the first term above we use 
(\ref{eq:truncation2}) as follows, 
\begin{eqnarray}
&&\int_{K} \kappa (I^{\omega_i}_{L_i}v-I^{\omega_j}_{L_j}v)^2 
\preceq 
\int_{\omega_i} \kappa (v-I^{\omega_i}_{L_i}v)^2+
\int_{\omega_j} \kappa (v-I^{\omega_i}_{L_i}v)^2\nonumber\\
&\preceq & 
\frac{1}{(\lambda_{L+1}^{\omega_i})^{2}} |||v-I_{L_i}^{\omega_i} v|||_{2,\omega_i}^2+
\frac{1}{(\lambda_{L+1}^{\omega_j})^{2}} |||v-I_{L_j}^{\omega_j} v|||_{2,\omega_j}^2\nonumber\\
&\preceq &
\frac{1}{(\lambda_{K,L+1})^2} \sum_{y_i\in K}|||v-I_{L_i}^{\omega_i} v|||_{2,\omega_i}^2. \label{eq:firstgradI0}
\end{eqnarray}
The second term in (\ref{eq:twotermsgradI0}) 
is estimated using (\ref{eq:truncationenergy}) 
\begin{eqnarray}
\int_{K} \kappa |\nabla(v-I^{\omega_i}_{L_i}v)|^2 \leq
\int_{\omega_i} \kappa |\nabla(v-I^{\omega_i}_{L_i}v)|^2
\preceq\frac{1}{\lambda_{L+1}^{\omega_i}} |||v-I_{L_i}^{\omega_i} v|||_{2,\omega_i}^2.\label{eq:secondgradI0}
\end{eqnarray}
By combining  (\ref{eq:firstgradI0}), (\ref{eq:secondgradI0})
and (\ref{eq:twotermsgradI0}) we obtain (\ref{eq:Iomegah1approx}).
\end{proof}

A similar lemma for the case of 
Dirichlet boundary conditions on 
$K$ is presented next. This is 
direct consequence of Lemma 
\ref{lem:aprioriestD}.
\begin{lemma}\label{lem:coarse-projection3}
Assume that $f=\mbox{\normalfont div}(\kappa \nabla u)\in L^2(\Omega)$ and also assume that 
for each $K \in \mathcal{T}^H$,  we have $u=0$ on $\partial K$. 
Then, the following energy approximation holds,
\begin{equation*}%\label{eq:JKh1approx}
\int_{K}\kappa  |\nabla u-\nabla J_D u|^2 \preceq 
\frac{1}{\mu^{K}_{L+1}}
||f||_{L^2(K)}^2.
\end{equation*} 
\end{lemma}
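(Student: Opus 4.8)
The plan is to reduce everything to a single coarse block $K$ and then read off the statement from the local decay estimate already proved. First I would record that, by construction, the global operator $J_D$ acts blockwise as the local Dirichlet eigen-truncation: since each piece $J^{K'}u$ is extended by zero outside its own block, the restriction of $J_D u$ to $K$ is exactly $J^K u = \mathcal{J}^K_{L_K} u = \sum_{\ell=1}^{L_K} m^K(u,\phi^K_\ell)\phi^K_\ell$. Recalling that $|||v|||_{1,K}^2=\int_K\kappa|\nabla v|^2$, the left-hand side of the claim is therefore precisely a local energy seminorm of a truncation error,
\[
\int_K \kappa\,|\nabla u - \nabla J_D u|^2 = |||\,u - \mathcal{J}^K_{L_K} u\,|||_{1,K}^2 .
\]

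Next I would check the hypotheses that make the eigenexpansion and the decay estimate legitimate. The assumption $u=0$ on $\partial K$ gives $u|_K\in H^1_0(K)$, so $u$ expands in the local Dirichlet basis. Moreover $f=\mbox{div}(\kappa\nabla u)\in L^2(\Omega)$ together with Theorem~\ref{thm:comparison} gives
\[
|||u|||_{2,K}^2 = \int_K \kappa^{-1}\,|\mbox{div}(\kappa\nabla u)|^2 = \int_K \kappa^{-1} f^2 < \infty,
\]
the finiteness being immediate from $\kappa\ge c_0>0$. Thus $|||u|||_{2,K}<\infty$, which is exactly the finiteness condition required to apply Lemma~\ref{lem:aprioriestD}.

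I would then invoke Lemma~\ref{lem:aprioriestD} in the case $t=1$, $s=2$. Concretely, writing $u-\mathcal{J}^K_{L_K}u = \sum_{\ell>L_K} m^K(u,\phi^K_\ell)\phi^K_\ell$ and pulling the factor $(\mu^K_\ell)^{-1}\le (\mu^K_{L_K+1})^{-1}$ out of the tail yields
\[
|||\,u-\mathcal{J}^K_{L_K}u\,|||_{1,K}^2 = \sum_{\ell>L_K}\mu^K_\ell\, m^K(u,\phi^K_\ell)^2 \le \frac{1}{\mu^K_{L_K+1}}\sum_{\ell>L_K}(\mu^K_\ell)^2\, m^K(u,\phi^K_\ell)^2 \le \frac{1}{\mu^K_{L_K+1}}\,|||u|||_{2,K}^2 .
\]
Combining this with the previous two displays and using $\kappa^{-1}\le c_0^{-1}$ to bound $\int_K \kappa^{-1} f^2 \preceq \|f\|_{L^2(K)}^2$ (absorbing $c_0^{-1}$ into $\preceq$) produces the asserted estimate, where $\mu^K_{L+1}$ in the statement stands for $\mu^K_{L_K+1}$.

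I do not expect any genuine obstacle here, as the result is a corollary; the only points requiring care are bookkeeping. The first is making explicit that $J_D u|_K = \mathcal{J}^K_{L_K}u$, so that the global interpolant is seen to reduce to the eigen-truncation on each block. The second is confirming $|||u|||_{2,K}<\infty$ through Theorem~\ref{thm:comparison}, which both justifies the use of Lemma~\ref{lem:aprioriestD} and converts the abstract norm $|||u|||_{2,K}^2$ into the concrete data quantity $\int_K\kappa^{-1}f^2$; the final passage from this weighted integral to $\|f\|_{L^2(K)}^2$ is harmless precisely because $\kappa$ is bounded below.
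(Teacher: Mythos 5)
Your proof is correct and follows exactly the route the paper intends: the paper offers no written proof of this lemma, saying only that it is a ``direct consequence of Lemma \ref{lem:aprioriestD}'', and your argument is that consequence made explicit (the blockwise identification $J_D u|_K=\mathcal{J}^K_{L_K}u$, the tail estimate with $t=1$, $s=2$, and $\kappa\ge c_0>0$ to replace $\int_K\kappa^{-1}f^2$ by $\|f\|_{L^2(K)}^2$).

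Two small bookkeeping points, one of which deserves an explicit line in the write-up. First, you invoke Theorem \ref{thm:comparison} in the converse direction from how it is stated: the theorem assumes $|||u|||_{2,K}<\infty$ and concludes that $\kappa^{-1}\mbox{\normalfont div}(\kappa\nabla u)\in L^2(K)$ with equality of the two quantities, whereas you need to pass from $f=\mbox{\normalfont div}(\kappa\nabla u)\in L^2(K)$ to the finiteness of $|||u|||_{2,K}$. The converse is true and takes one line, but it is not literally covered by the citation: since $u|_K\in H^1_0(K)$ and each $\phi^K_\ell\in H^1_0(K)$, integration by parts gives $\mu^K_\ell\, m^K(u,\phi^K_\ell)=a^K(u,\phi^K_\ell)=-\int_K f\phi^K_\ell=-m^K(\kappa^{-1}f,\phi^K_\ell)$, and Parseval for the $m^K$-orthonormal system $\{\phi^K_\ell\}$ then yields $|||u|||_{2,K}^2=\sum_\ell m^K(\kappa^{-1}f,\phi^K_\ell)^2=\int_K\kappa^{-1}f^2<\infty$. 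With that line inserted, your appeal to the decay estimate is fully justified. Second, Lemma \ref{lem:aprioriestD} as printed requires $1\le s\le t\le 2$, which would exclude your choice $t=1$, $s=2$; this is evidently a typo in the paper (the decay estimate is only meaningful for $t\le s$, and the paper's own ``in particular'' case corresponds to $t=1$, $s=2$), and your explicit computation pulling $(\mu^K_\ell)^{-1}\le(\mu^K_{L_K+1})^{-1}$ out of the tail supplies the correct statement independently of the cited lemma, so nothing is lost.
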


The idea is to use the Assumption \ref{assumption} and then to apply $I_N$ to $u_N$ and the other part, that is $u_D$ will be approximated by a truncated expansion on $V_D$. Under Assumption \ref{assumption} and for the solution $u$ define  the coarse interpolation $I_0$ by
\begin{align}
I_0u  &=\sum_{i=1}^{N_c} \sum_{\ell=1}^{L_i}
\left(\int_{\omega_i}\kappa u_N
\psi_\ell^{\omega_i} \right)
\chi_i\psi_{\ell}^{\omega_i}  +  
\sum_{K\in\mathcal{T}^H} \sum_{\ell=1}^{L_K}
\left(\int_{K}\kappa u_D
\phi_\ell^{K} \right)
\phi_{\ell}^{K}\nonumber\\
    &= \sum_{i=1}^{N_c}  (I^{\omega_i}_{L_i}u_N)\chi_i +
    \sum_{K\in\mathcal{T}^H}  (J^{K}u_D) =
    I_N u_N + J_D u_D. \label{eq:def:I0}
\end{align}
Recall that $L_i$ is the number of Newmann eigenfunctions considered in the neighborhood $w_i$ and $L_K$ is the number of Dirichlet eigenfunctions considered on the element $K$. We finally present our main approximation result.
\begin{theorem}\label{lem:coarse-projection-global}
 Assume that $|||u|||_{2,\Omega}<\infty $ 
where $u$  is the solution of ($\ref{eq:problem1}$)
and that Assumption \ref{assumption} holds, 
the following approximation for the energy interpolation error holds,

\begin{equation}\label{eq:I0stabGLOBAL}
\int_{\Omega}\kappa  |\nabla u-\nabla I_0u |^2\preceq 
\left(\max\left\{\frac{1}{H^2(\lambda_{L+1})^{2}},\frac{1}{\lambda_{L+1}}
\right\} +
\frac{1}{\mu_{L+1}}+\epsilon^2
\right)
|||\kappa^{-1/2}f|||_{0}^2
\end{equation}

where $\lambda_{L+1}=\min_{ K}\lambda_{K,L+1}^{\omega_i}$ 
and 
$\mu_{L+1}=\min_{ K}\mu_{L+1}^{K}$.
\end{theorem}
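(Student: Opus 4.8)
The plan is to exploit the splitting $u \approx u_D + u_N$ furnished by Assumption \ref{assumption} and to estimate the interpolation error of each piece using the local results already established. Writing $I_0 u = I_N u_N + J_D u_D$ as in \eqref{eq:def:I0} and inserting the exact solution, I would decompose
\[
u - I_0 u = (u - u_D - u_N) + (u_N - I_N u_N) + (u_D - J_D u_D),
\]
and then invoke $(a+b+c)^2 \preceq a^2 + b^2 + c^2$ to bound the global energy by the sum of the three energies. The first term is handled immediately by Assumption \ref{assumption}, item 1, which produces the $\epsilon^2 |||\kappa^{-1/2} f|||_0^2$ contribution.

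For the second (Neumann) term I would argue element by element. Since $\partial_\eta u_N = 0$ on every $\partial K$ and each $\partial \omega_i$ is a union of coarse element edges, the hypothesis $\partial_\eta u_N = 0$ on $\partial \omega_i$ of Lemma \ref{lem:coarse-projection2} holds for every $y_i \in K$. Applying that lemma to $u_N$ on each $K$, replacing each $\lambda_{K,L+1}$ by its minimum $\lambda_{L+1}$, and summing gives
\[
\int_{\Omega} \kappa |\nabla(u_N - I_N u_N)|^2 \preceq \max\left\{\frac{1}{H^2 \lambda_{L+1}^2},\ \frac{1}{\lambda_{L+1}}\right\} \sum_K \int_{\omega^K} \kappa^{-1}|\mbox{div}(\kappa \nabla u_N)|^2 .
\]
A finite-overlap argument for the patches $\omega^K$ turns $\sum_K \int_{\omega^K}$ into a mesh-dependent constant times $\int_{\Omega}$, after which Assumption \ref{assumption}, item 3, bounds $\int_{\Omega} \kappa^{-1}|\mbox{div}(\kappa \nabla u_N)|^2$ by $\int_{\Omega} \kappa^{-1}|\mbox{div}(\kappa \nabla u)|^2 = \int_{\Omega} \kappa^{-1} f^2$, the last equality coming from Theorem \ref{thm:AanddivGlobal} and the identity $\mathcal{A} u = \kappa^{-1} f$.

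The third (Dirichlet) term is treated analogously using Lemma \ref{lem:coarse-projection3}: because $u_D = 0$ on every $\partial K$, that lemma applies on each block, and after taking $\mu_{L+1} = \min_K \mu_{L+1}^K$ and summing over the (disjoint) blocks $K$ it yields the $\frac{1}{\mu_{L+1}} \int_{\Omega} \kappa^{-1}|\mbox{div}(\kappa \nabla u_D)|^2$ contribution, again controlled via Assumption \ref{assumption}, item 3, by $\frac{1}{\mu_{L+1}} \int_{\Omega} \kappa^{-1} f^2$. Collecting the three bounds and recalling that $|||\kappa^{-1/2} f|||_0^2 = \int_{\Omega} \kappa^{-1} f^2$ delivers \eqref{eq:I0stabGLOBAL}.

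I expect the main obstacle to be bookkeeping rather than any deep difficulty. One must verify carefully that the boundary conditions posed on the element boundaries $\partial K$ in Assumption \ref{assumption} actually supply the hypotheses stated on the neighborhood boundaries $\partial \omega_i$ required by Lemma \ref{lem:coarse-projection2}, and one must control the finite overlap of the patches $\omega^K$ so that summing the local estimates costs only a constant absorbed into $\preceq$. The genuinely delicate point is keeping the two distinct local spectral problems separate, so that the Neumann threshold $\lambda_{L+1}$ and the Dirichlet threshold $\mu_{L+1}$ enter the final estimate in their own terms rather than being conflated.
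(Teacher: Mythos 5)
Your proposal is correct and follows essentially the same route as the paper's proof: the same three-way splitting $u-I_0u=(u-u_D-u_N)+(u_N-I_Nu_N)+(u_D-J_Du_D)$, the same use of Lemma \ref{lem:coarse-projection2} on the Neumann part and Lemma \ref{lem:coarse-projection3} on the Dirichlet part, followed by Assumption \ref{assumption} (items 1 and 3) to absorb everything into $\int_{\Omega}\kappa^{-1}f^2$. Your explicit attention to the finite overlap of the patches $\omega^K$ and to the fact that $\partial_\eta u_N=0$ on all $\partial K$ implies the required condition on each $\partial\omega_i$ is a welcome clarification of details the paper's own proof passes over silently.
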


\begin{proof}
By definition (\ref{eq:def:I0}), our technical assumption and using the triangular inequality we have 
\begin{align}
 \int_{\Omega}\kappa  |\nabla u-\nabla I_0u |^2 & = \int_{\Omega}\kappa  |\nabla (u + u_N + u_D - (u_N + u_D))-\nabla ( I_N u_N + J_D u_D) |^2 \nonumber\\
 &\preceq \int_{\Omega}\kappa  |\nabla (u_N - I_N u_N )|^2  + \int_{\Omega}\kappa  |\nabla (u_D  - J_D u_D) |^2 + \int_{\Omega}\kappa  |\nabla (u - u_N - u_D)|^2.\label{eq:threeSum}
\end{align}
Using Lemma \ref{lem:coarse-projection2} and noting that $\bigcup K \subset \bigcup w^K$ we have for the firs term in (\ref{eq:threeSum})
\begin{align}
    \int_{\Omega}\kappa  |\nabla (u_N - I_N u_N )|^2 &= \sum_{K\in\mathcal{T}^H}\int_{K}\kappa  |\nabla (u_N - I_N u_N )|^2\nonumber\\ 
    &\leq \sum_{w^K}\max\left\{\frac{1}{H^2\lambda_{K,L+1}^{2}},\frac{1}{\lambda_{K,L+1}}\right\} ||k^{-1}\div(\kappa \nabla u_N)||_{L^2(w^K)}^2\nonumber\\
    &\preceq  \max_{K\in\mathcal{T}^H}\left\{\max\left\{\frac{1}{H^2\lambda_{K,L+1}^{2}},\frac{1}{\lambda_{K,L+1}}\right\}\right\} \sum_{w^K} ||k^{-1}\div(\kappa \nabla u_N)||_{L^2(w^K)}^2\nonumber\\
    &\preceq \max\left\{\frac{1}{H^2\lambda_{L+1}^{2}},
    \frac{1}{\lambda_{L+1}}\right\} ||k^{-1}\div(\kappa \nabla u_N)||_{L^2(\Omega)}^2\label{eq:sumando1}
\end{align}
Now using Lemma \ref{lem:coarse-projection3} for the second term in (\ref{eq:threeSum}) we get
\begin{align}
   \int_{\Omega}\kappa  |\nabla (u_D  - J_D u_D) |^2 &=\sum_{K\in\mathcal{T}^H}\int_{K}\kappa  |\nabla (u_D  - J_D u_D) |^2 \preceq 
   \sum_{K\in\mathcal{T}^H}\frac{1}{\mu^{K}_{L+1}} ||k^{-1}\div(\kappa \nabla u_D)||_{L^2(K)}^2\nonumber\\
   &\preceq \max_{K\in\mathcal{T}^H}\left\{\frac{1}{\mu_{K,L+1}}\right\} \sum_{K\in\mathcal{T}^H}||k^{-1}\div(\kappa \nabla u_D)||_{L^2(K)}^2\nonumber\\
   &\preceq  \frac{1}{\mu_{L+1}} ||k^{-1}\div(\kappa \nabla u_D)||_{L^2(\Omega)}^2\label{eq:sumando2}
\end{align}
using (\ref{eq:assA}) we obtain the bounds for the third term in (\ref{eq:threeSum}) 
\begin{equation}\label{eq:thirdsum}
    \int_{\Omega}\kappa  |\nabla (u - u_N - u_D)|^2 \preceq \epsilon^2
        \int_{\Omega} \kappa^{-1}f^2
\end{equation}
with (\ref{eq:thirdsum}) and using (\ref{eq:assC}) in (\ref{eq:sumando1}) and (\ref{eq:sumando2})  we obtain (\ref{eq:I0stabGLOBAL}) from (\ref{eq:threeSum}).
\end{proof}

With the tools we have at hand we can obtain the convergence of the GMsFEM 
method of Section \ref{sec:gmsfem}. Under 
Assumption \ref{assumption}, 
by combining the Cea's lemma with our interpolation approximation result 
(Lemma \ref{lem:coarse-projection-global} )
and the estimates in Lemma \ref{lem:aprioriest} we obtain
the following error estimates.

\begin{theorem}\label{thm:finalerror}
Let $u$ be the solution of problem (\ref{eq:problem}) with $f$ being 
square integrable and let $u_H$ the solution of (\ref{eq:coarseproblemMsFEM})
using the coarse basis functions constructed in (\ref{eq:def:Phiil}). 
Suppose  Assumption \ref{assumption} holds. We have,
\[
\int_{\Omega}\kappa|\nabla(u-u_H)|^2\preceq 
\left(\max\left\{\frac{1}{H^2(\lambda_{L+1})^{2}},\frac{1}{\lambda_{L+1}}
\right\} +
\frac{1}{\mu_{L+1}}+\epsilon^2
\right)
\int_{\Omega} \kappa^{-1}
f^2
\]
where ${\lambda_{L+1}}$ is the minimum left-out eigenvalue and was introduced in Theorem  \ref{lem:coarse-projection-global}
\end{theorem}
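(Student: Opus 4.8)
The plan is to reduce the energy error of the Galerkin solution $u_H$ to the interpolation error already controlled by Theorem~\ref{lem:coarse-projection-global}, using Cea's lemma. Since $a(\cdot,\cdot)$ is symmetric and coercive, the $a$-energy is itself the relevant inner product, so quasi-optimality holds with constant one and there is no additional stability constant to track.

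First I would record the best-approximation bound. Because $u$ solves $a(u,v)=f(v)$ for all $v\in H^1_0(\Omega)$ and $u_H$ solves $a(u_H,v)=f(v)$ for all $v\in V_0\subset H^1_0(\Omega)$, subtracting gives the Galerkin orthogonality $a(u-u_H,v)=0$ for every $v\in V_0$. Combined with the Cauchy--Schwarz inequality in the $a$-inner product this yields
\[
\int_{\Omega}\kappa|\nabla(u-u_H)|^2 = a(u-u_H,u-u_H)\leq a(u-v,u-v)=\int_{\Omega}\kappa|\nabla(u-v)|^2 \quad\text{for all } v\in V_0.
\]

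Next I would insert the admissible competitor $v=I_0u\in V_0$. Here one checks that $I_0u=I_Nu_N+J_Du_D$ from \eqref{eq:def:I0} indeed lies in $V_0=V_N+V_D$: each term $(I^{\omega_i}_{L_i}u_N)\chi_i$ is a linear combination of the functions $\Phi_{i,\ell}=\chi_i\psi^{\omega_i}_\ell$ spanning $V_N$, while each $J^{K}u_D$ is a linear combination of the Dirichlet eigenfunctions $\phi^K_\ell$ spanning $V_D$. Substituting and invoking Theorem~\ref{lem:coarse-projection-global}, whose hypotheses $|||u|||_{2,\Omega}<\infty$ and Assumption~\ref{assumption} are precisely those assumed here, gives
\[
\int_{\Omega}\kappa|\nabla(u-u_H)|^2 \leq \int_{\Omega}\kappa|\nabla(u-I_0u)|^2 \preceq \left(\max\left\{\frac{1}{H^2(\lambda_{L+1})^{2}},\frac{1}{\lambda_{L+1}}\right\}+\frac{1}{\mu_{L+1}}+\epsilon^2\right)|||\kappa^{-1}f|||_0^2.
\]

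Finally I would rewrite the source factor in terms of $f$: by Lemma~\ref{lem:aprioriest} (equivalently by the direct computation $|||\kappa^{-1}f|||_0^2=\int_{\Omega}\kappa(\kappa^{-1}f)^2=\int_{\Omega}\kappa^{-1}f^2$), the factor equals $\int_{\Omega}\kappa^{-1}f^2$, which produces the stated estimate. I do not anticipate a genuine obstacle: all of the analytic difficulty has been absorbed into Theorem~\ref{lem:coarse-projection-global} and Assumption~\ref{assumption}, so the only points requiring care are the membership $I_0u\in V_0$ and the fact that $V_0\subseteq H^1_0(\Omega)$, which legitimizes the Galerkin orthogonality underlying Cea's lemma.
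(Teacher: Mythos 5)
Your proposal is correct and follows essentially the same route as the paper, which obtains the theorem precisely by combining Cea's lemma (via Galerkin orthogonality and the inclusion $V_0\subset H^1_0(\Omega)$) with the interpolation estimate of Theorem~\ref{lem:coarse-projection-global} and the identification of the source factor through Lemma~\ref{lem:aprioriest}. The only detail worth making explicit is that the hypothesis $|||u|||_{2,\Omega}<\infty$ of Theorem~\ref{lem:coarse-projection-global} is not assumed directly but follows from $f\in L^2(\Omega)$ and $\kappa\geq c_0>0$ via Lemma~\ref{lem:aprioriest}, since $|||u|||_{2,\Omega}^2=\int_{\Omega}\kappa^{-1}f^2\leq c_0^{-1}\int_{\Omega}f^2<\infty$.
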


It is easy to see that if we map the local eigenvalue problem posed in 
$\omega_i$ (of diameter $H$) to a size one domain, then, the resulting 
eigenvalues scale with $H^{-2}$. If we re-scale all the eigenvalue problem 
to size one domains, we can then write the estimates in terms of 
eigenvalue problems posed in one size domains. In this way it is clear 
the $H$ dependence of our estimate.
We have the following result.
\begin{corollary}
Under the assumptions of Theorem \ref{thm:finalerror} we have,
\[
\int_{\Omega}\kappa|\nabla(u-u_H)|^2\preceq H^{2} \int_{\Omega} \kappa^{-1}
f^2,
\]
where the hidden constant involves eigenvalues of  re-scaled eigenvalue problems posed on the unit square.
\end{corollary}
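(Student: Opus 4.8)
The plan is to start directly from the estimate of Theorem~\ref{thm:finalerror} and to absorb all of the $H$-dependence into the eigenvalues $\lambda_{L+1}$ and $\mu_{L+1}$ by means of a scaling argument. The only substantive work is to quantify how the local Neumann eigenvalues $\lambda^{\omega_i}_{L+1}$ and the local Dirichlet eigenvalues $\mu^K_{L+1}$ behave under the change of variables that maps a block of diameter comparable to $H$ onto a reference configuration of unit size; everything else is algebra applied to the bracket on the right-hand side of Theorem~\ref{thm:finalerror}, together with Assumption~\ref{assumption}.

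First I would set up the rescaling. For a neighborhood $\omega_i$ of diameter comparable to $H$, introduce the affine map $x = H\hat x$ carrying a reference region $\hat\omega_i$ of unit size onto $\omega_i$, and let $\hat\kappa(\hat x)=\kappa(H\hat x)$ denote the pulled-back coefficient. Using $\nabla_x = H^{-1}\nabla_{\hat x}$ and $dx = H^{d}\,d\hat x$ (with $d=2$ or $3$), the two bilinear forms defining the local generalized eigenvalue problem transform as
\begin{equation*}
a^{\omega_i}(\psi,\psi)=H^{d-2}\,\hat a(\hat\psi,\hat\psi),\qquad
m^{\omega_i}(\psi,\psi)=H^{d}\,\hat m(\hat\psi,\hat\psi),
\end{equation*}
so that the Rayleigh quotient, and hence every eigenvalue, scales as $\lambda^{\omega_i}_{L+1}=H^{-2}\,\hat\lambda^{\hat\omega_i}_{L+1}$, where $\hat\lambda^{\hat\omega_i}_{L+1}$ is the corresponding eigenvalue of the problem posed on the unit-size reference region with coefficient $\hat\kappa$. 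The identical computation applied to the Dirichlet problem on a block $K$ gives $\mu^K_{L+1}=H^{-2}\hat\mu^K_{L+1}$. Under the usual shape-regularity of the coarse mesh the reference regions form a finite family, so after taking minima over blocks I would record $\lambda_{L+1}=H^{-2}\hat\lambda_{L+1}$ and $\mu_{L+1}=H^{-2}\hat\mu_{L+1}$ for the corresponding reference eigenvalues.

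Next I would substitute these scalings into the bracket of Theorem~\ref{thm:finalerror}. Each of the three spectral terms becomes a pure multiple of $H^2$:
\begin{equation*}
\frac{1}{H^2(\lambda_{L+1})^2}=\frac{H^2}{(\hat\lambda_{L+1})^2},\qquad
\frac{1}{\lambda_{L+1}}=\frac{H^2}{\hat\lambda_{L+1}},\qquad
\frac{1}{\mu_{L+1}}=\frac{H^2}{\hat\mu_{L+1}}.
\end{equation*}
For the remaining term I would invoke Assumption~\ref{assumption}, which provides $\epsilon\preceq H$ and hence $\epsilon^2\preceq H^2$. Collecting the four contributions, the bracket is bounded by $H^2$ times the constant $\max\{(\hat\lambda_{L+1})^{-2},(\hat\lambda_{L+1})^{-1}\}+(\hat\mu_{L+1})^{-1}+C$, which depends only on the re-scaled eigenvalue problems posed on the unit square. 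Feeding this back into Theorem~\ref{thm:finalerror} yields the claimed bound, with the hidden constant exactly as described.

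The main point to watch, rather than a genuine obstacle, is the behavior of the coefficient under rescaling: for a high-contrast $\kappa$ the pulled-back coefficient $\hat\kappa$ retains the full contrast, so the reference eigenvalues $\hat\lambda_{L+1},\hat\mu_{L+1}$ could in principle still depend on the contrast. This is acceptable here, since the statement only asserts that the hidden constant \emph{involves} these reference eigenvalues; to make the bound genuinely contrast-independent one would additionally appeal to the remark in Section~\ref{section8} that, for $L$ large enough, $\lambda^{\omega_i}_{L+1}$ (and likewise $\hat\lambda_{L+1}$) becomes contrast-independent. I would also note in passing that the factors $H^{d-2}$ in the energy form and $H^{d}$ in the mass form are the only place the spatial dimension enters, and since they cancel in the Rayleigh quotient the $H^{-2}$ scaling is independent of $d$.
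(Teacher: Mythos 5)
Your proposal is correct and follows essentially the same route as the paper: the paper's own justification is the paragraph preceding the corollary, which asserts that mapping each local eigenvalue problem on a region of diameter $H$ to a unit-size domain rescales the eigenvalues by $H^{-2}$, so that substituting $\lambda_{L+1}=H^{-2}\hat\lambda_{L+1}$, $\mu_{L+1}=H^{-2}\hat\mu_{L+1}$ and $\epsilon^2\preceq H^2$ into the bracket of Theorem \ref{thm:finalerror} yields the $H^2$ factor. Your write-up merely makes explicit the change-of-variables computation for the two bilinear forms (and the shape-regularity and contrast caveats) that the paper leaves as ``easy to see.''
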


%%%%%%%%%%%%%%%%%%%%%%%%%%%%%%%%%%%%%%%%%%%%%%%%%%%%%%%%%%%%%%%%%%%%%%%%%%%
%%
%%				Numerical Experiments
%%
%%%%%%%%%%%%%%%%%%%%%%%%%%%%%%%%%%%%%%%%%%%%%%%%%%%%%%%%%%%%%%%%%%%%%%%%%%%%%

\section{Numerical experiments}\label{sec:numerics}

Our aim is to show that, when applying GMsFEM, including Dirichlet's basis functions 
is necessary in some cases. So let's consider problem (\ref{eq:problem1}) on a 
square domain $\Omega$ subject to homogeneous Dirichlet's boundary conditions. Define a fine 
rectangular $512 \times 512$ mesh $\mathcal{T}^h$ and a coarse rectangular 
$4 \times 4$ mesh $\mathcal{T}^H$. Coarse basis functions are associated to 
each nodal point of $\mathcal{T}^H$ and supported on it's $4$ adjacent rectangles. We  apply the GMsFEM method to problem (\ref{eq:problem1}) with homogeneous  medium ($\kappa \equiv 1$) considering $2$ different 
smooth sources. \\

\textbf{Experiment $1$}: Apply GMsFEM to problem (\ref{eq:problem1}) with source $f_1$ being a smooth function composed by product of polynomials and sines which correspond to the exact solution $u_1$ 
\begin{equation}
\begin{cases}
    u_1(x,y) = \sin(\pi x)    \sin(\pi y)  (-x+3y)\\
    f_1(x,y) = 2\pi \cos(\pi x)\sin(\pi y) - 6 \pi \sin(\pi x) \cos(\pi y) + 2 \pi^2  \sin(\pi x) \sin(\pi y)   (-x+3y)
    \label{Exemplo1}
\end{cases}
\end{equation}

\textbf{Experiment $2$}: Apply GMsFEM to problem (\ref{eq:problem1}) with source $f_2$ being a linear combination of sines that correspond to the exact solution $u_2$ also constructed by the linear combination  of the $\mathbb{R}^2$ tonsorial product of functions $\{ \sin(4 k \pi ) \}_{k = 1,2}$
\begin{equation}\label{Exemplo2}
   \begin{cases}
      u_2(x,y) =  \sum_{k,l=1}^2 c_{k,l} ~  \sin(4k\pi) \sin(4l\pi)\\
      f_2(x,y) =  \sum_{k,l=1}^2 16(k+l) ~ c_{k,l} ~ \sin(4k\pi)  \sin(4l\pi)
   \end{cases}
\end{equation}
with $c_{i j} = 1/(i + 2(j-1))$ decreasing as $i$ and $j$ grow. \\

We vary the number  of Newman function  $N_N = 1 \cdots 40$ while fixing the number of Dirichlet function by element  ($N_D = 0 \cdots 20$ in Experiment $1$ and $N_D = 0 \cdots 3$ in Experiment $2$), . The errors in the energy norm are presented in Figure \ref{EnergyErrors1} while in Figures \ref{Smooth1} and \ref{Smooth2} we can see the evolution of solution $u_1$ and $u_2$  respectively.\\

\begin{figure}[h!]
  \begin{minipage}[b]{0.5\linewidth}
  		\centering
  		\includegraphics[scale=.5]{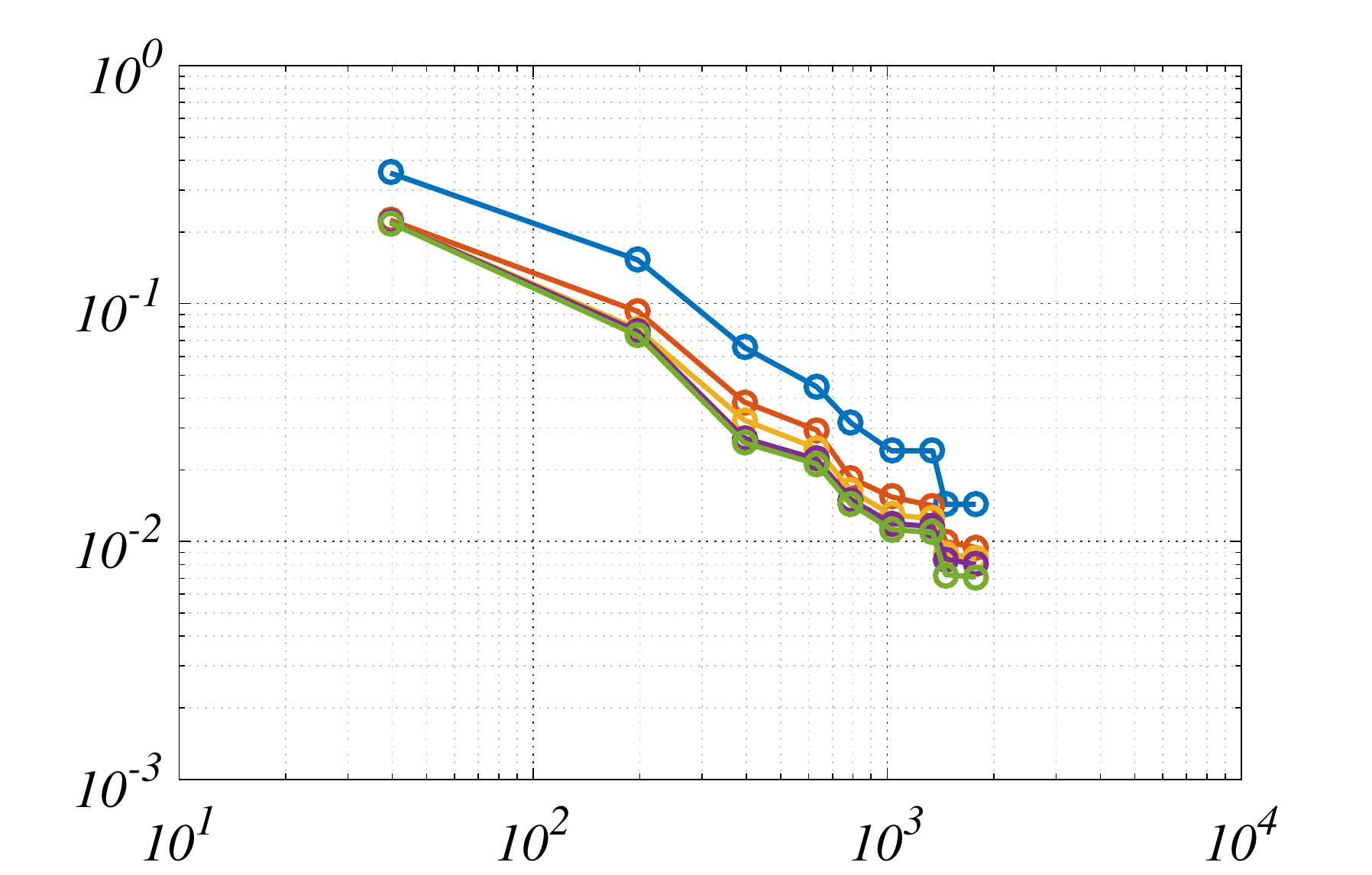}
  \end{minipage}  
  \begin{minipage}[b]{0.5\linewidth}
  		\centering
        \includegraphics[scale=.5]{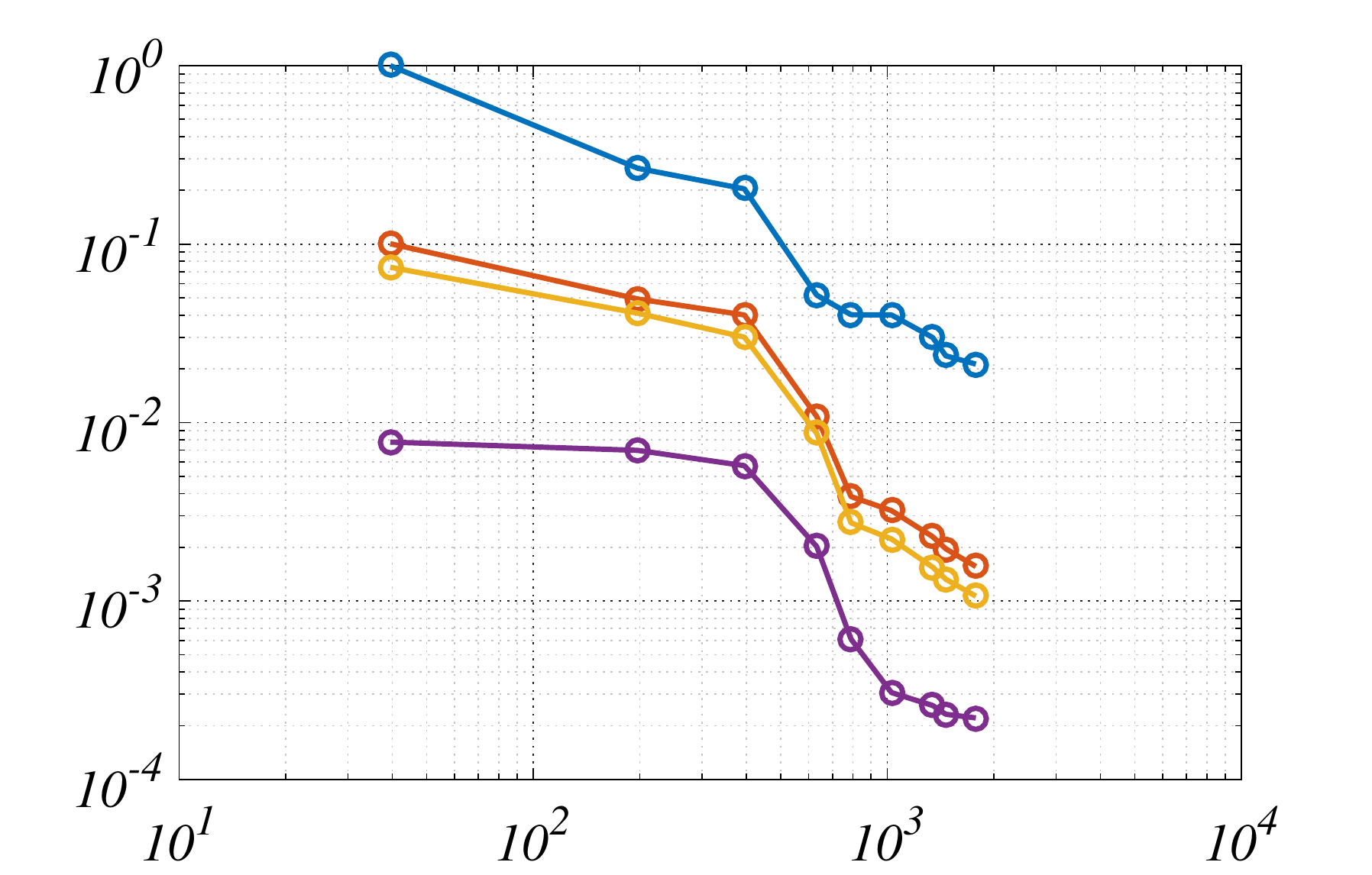}
  \end{minipage}  
  \caption{Log-Log plots of errors in the energy norm  vs. first eigenvalue out of the expansion after applying GMsFEM to Examples $1$ and $2$. (Left)  source term and exact solution in (\ref{Exemplo1}). (Right) source term and exact solution in  (\ref{Exemplo2}).  Each color correspond to a fixed  $N_D = \{ 0,5,10,15,20\} $ and ($ N_D = \{ 0,1,2,3\} $ (right)) and each circle correspond to $N_N = \{1,5,10,...,40\}$.}
  \label{EnergyErrors1}
\end{figure}

\begin{figure}[h!]
  \begin{minipage}[b]{0.5\linewidth}
  		\centering
  		\includegraphics[scale=.4]{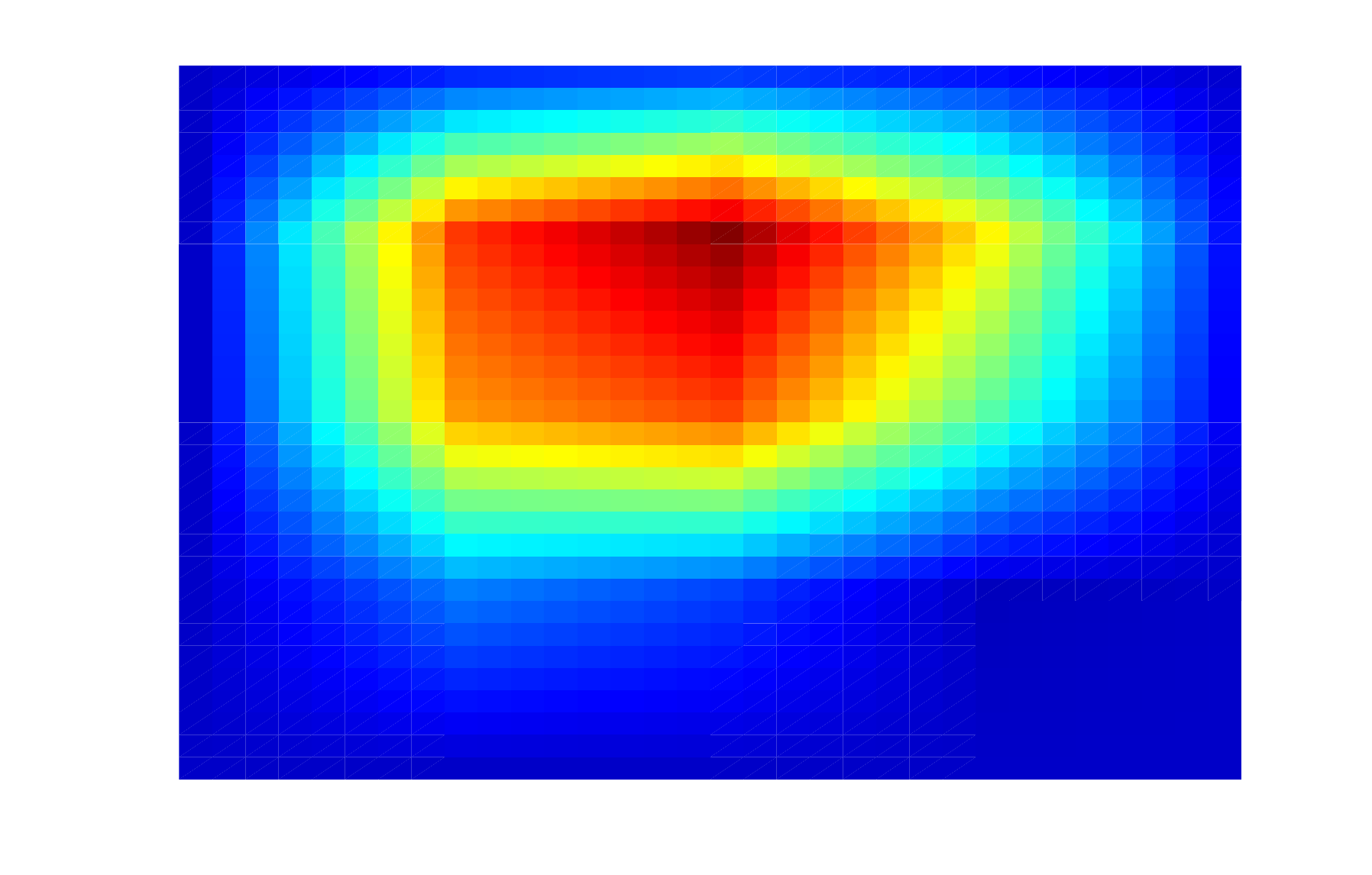}
        \captionsetup{labelformat=empty}
       % \caption*{\ref{Smooth}a}
  \end{minipage}  
  \begin{minipage}[b]{0.5\linewidth}
  		\centering
        \includegraphics[scale=.4]{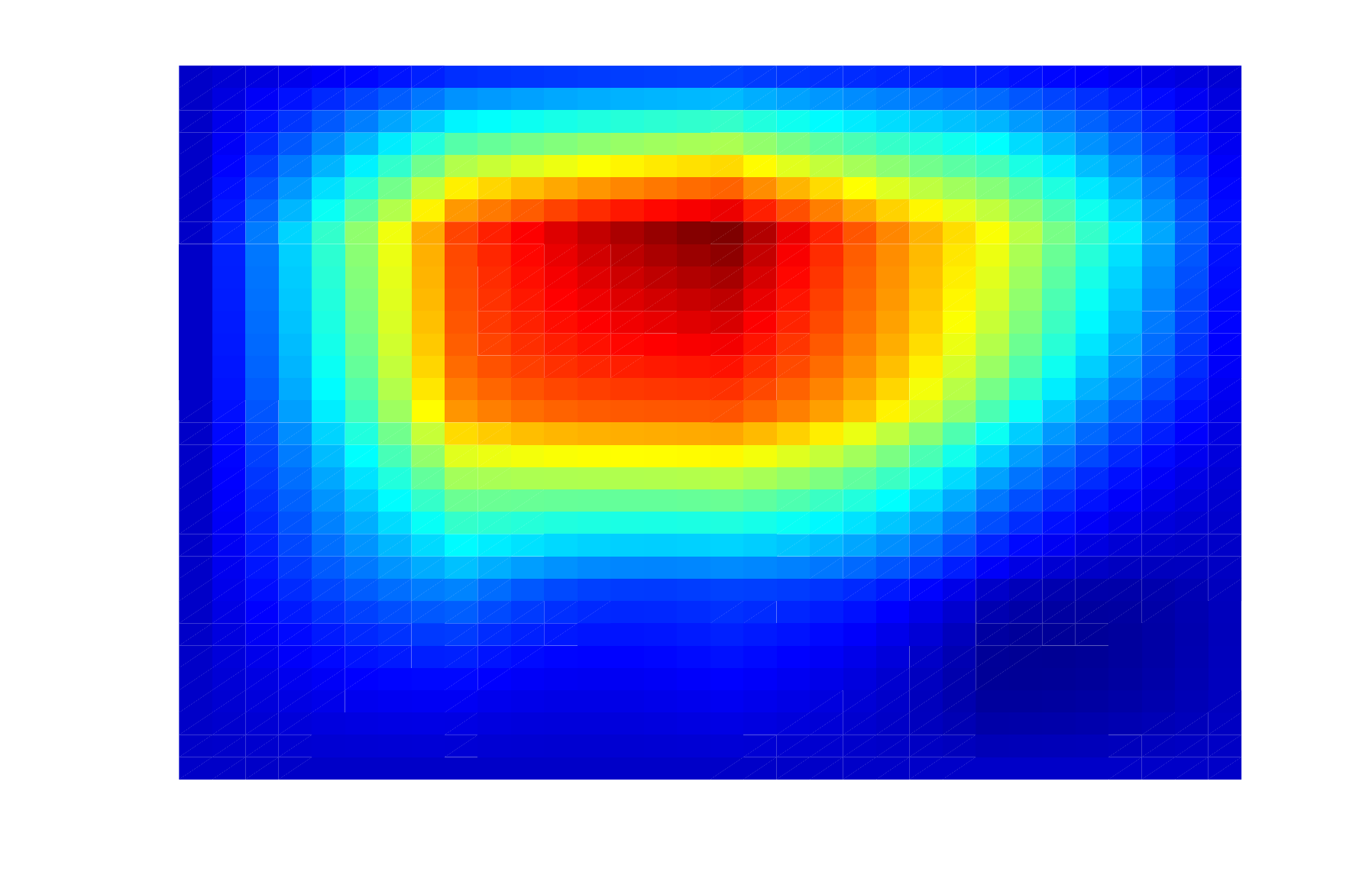}
        \captionsetup{labelformat=empty}
        %\caption*{\ref{Smooth}b}
  \end{minipage}  
  \begin{minipage}[b]{0.5\linewidth}
  		\centering
  		\includegraphics[scale=.4]{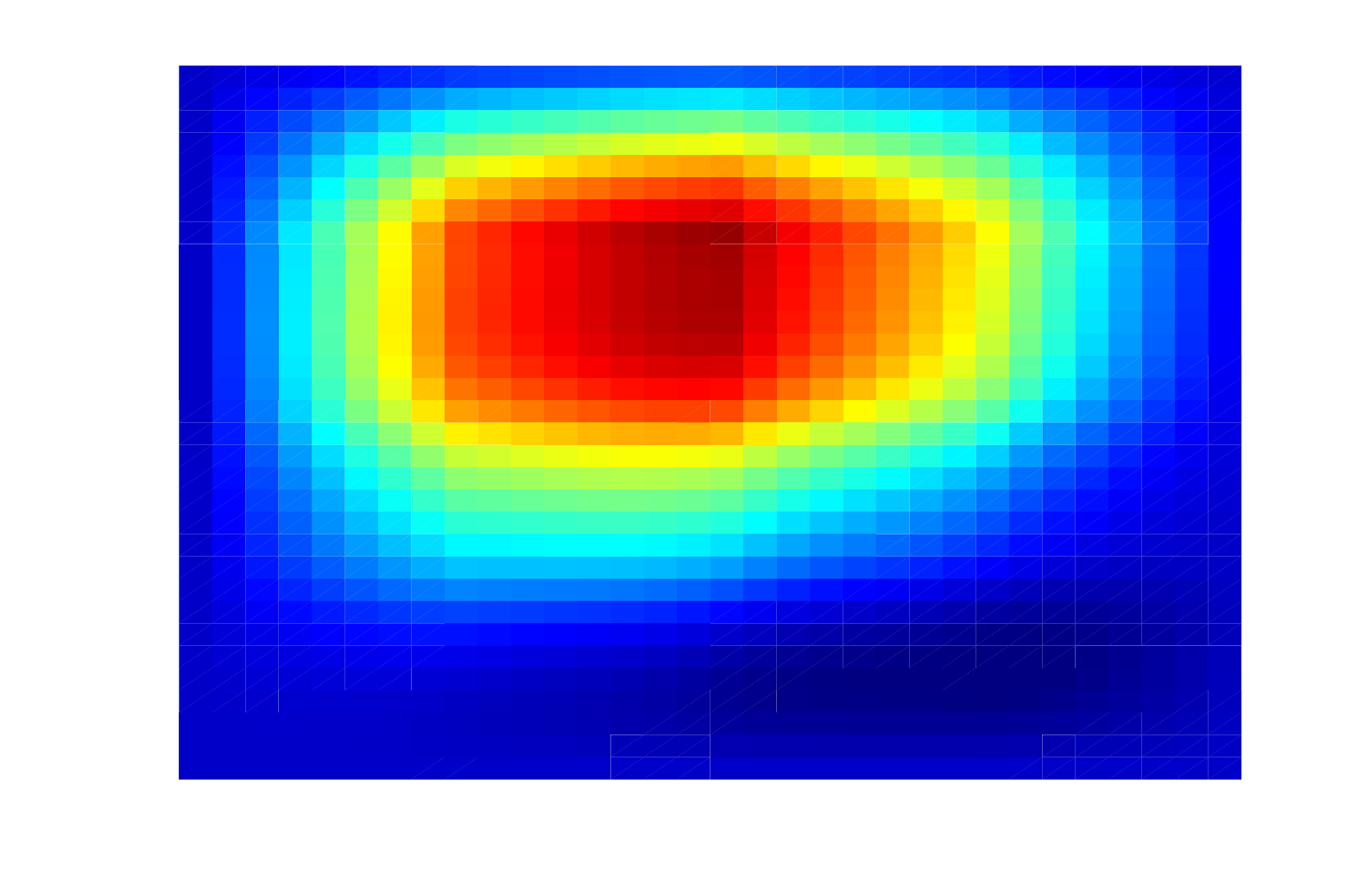}
        \captionsetup{labelformat=empty}
        %\caption*{\ref{Smooth}c}
  \end{minipage} 
  \begin{minipage}[b]{0.5\linewidth}
		\centering
        \includegraphics[scale=.4]{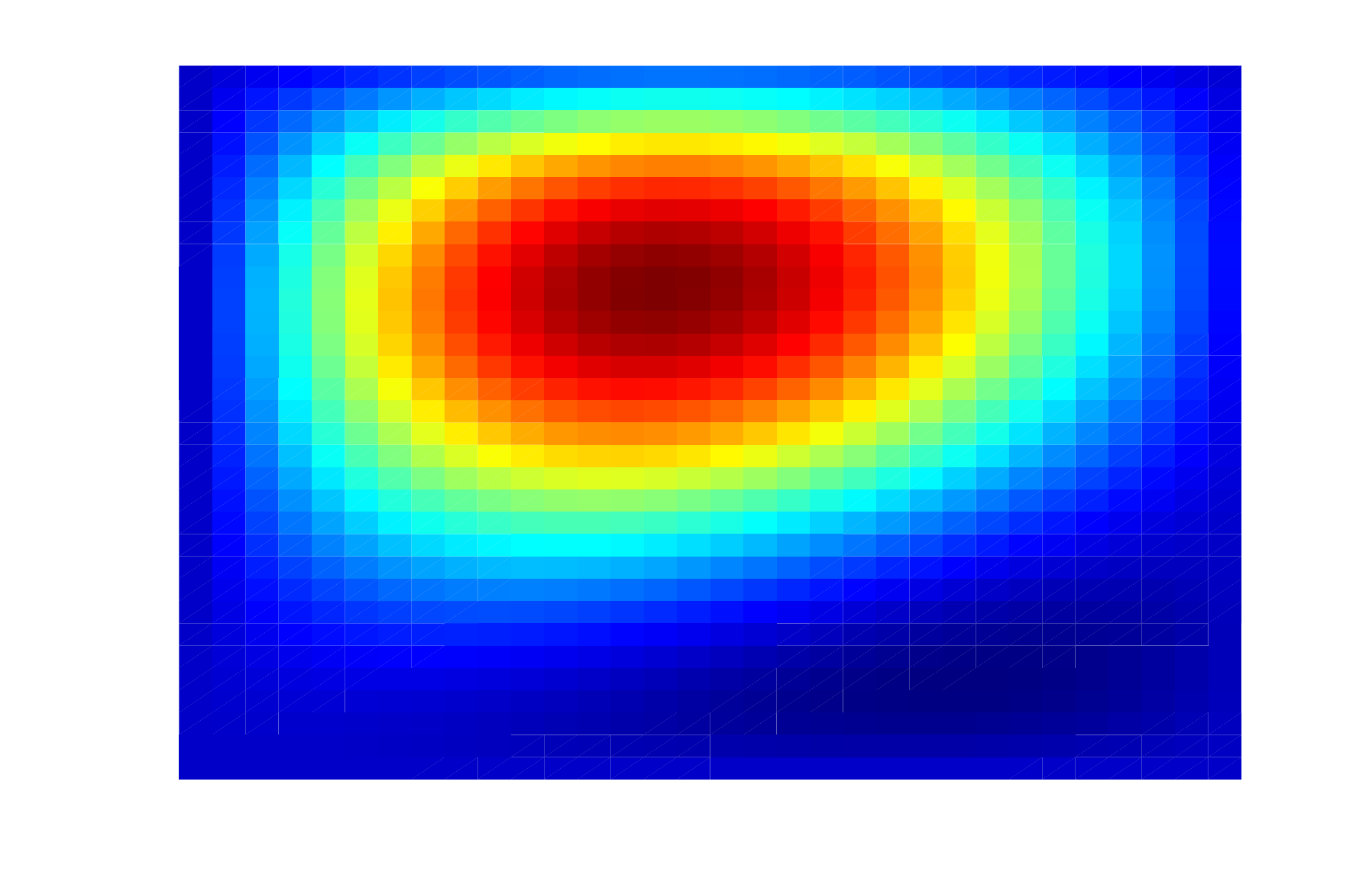}
        \captionsetup{labelformat=empty}
        %\caption*{\ref{Smooth}d}
  \end{minipage}
  \caption{Level plots of $3$ approximations of exact solution $u_1$ in (\ref{Exemplo1}). (Up-Left 2-$N_N$, 2-$N_D$), (Up-Right 3-$N_N$, 3-$N_D$) and (Down-Left 4-$N_N$, 4-$N_D$) evolving to exact solution (Down-Right).}
\label{Smooth1}
\end{figure}

\begin{figure}[h!]
  \begin{minipage}[b]{0.5\linewidth}
  		\centering
  		\includegraphics[scale=.4]{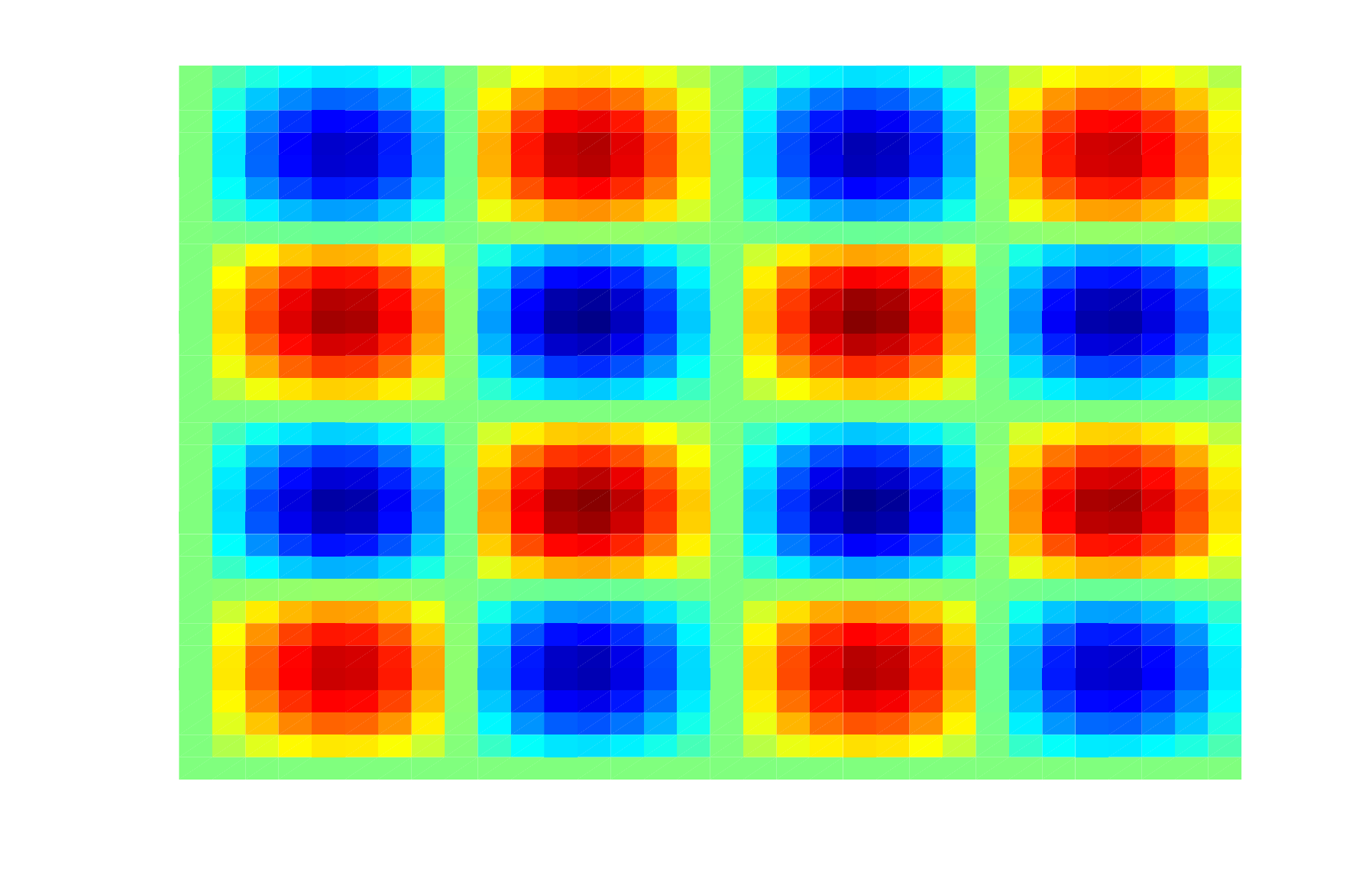}
        \captionsetup{labelformat=empty}
       % \caption*{\ref{Smooth}a}
  \end{minipage}  
  \begin{minipage}[b]{0.5\linewidth}
  		\centering
        \includegraphics[scale=.4]{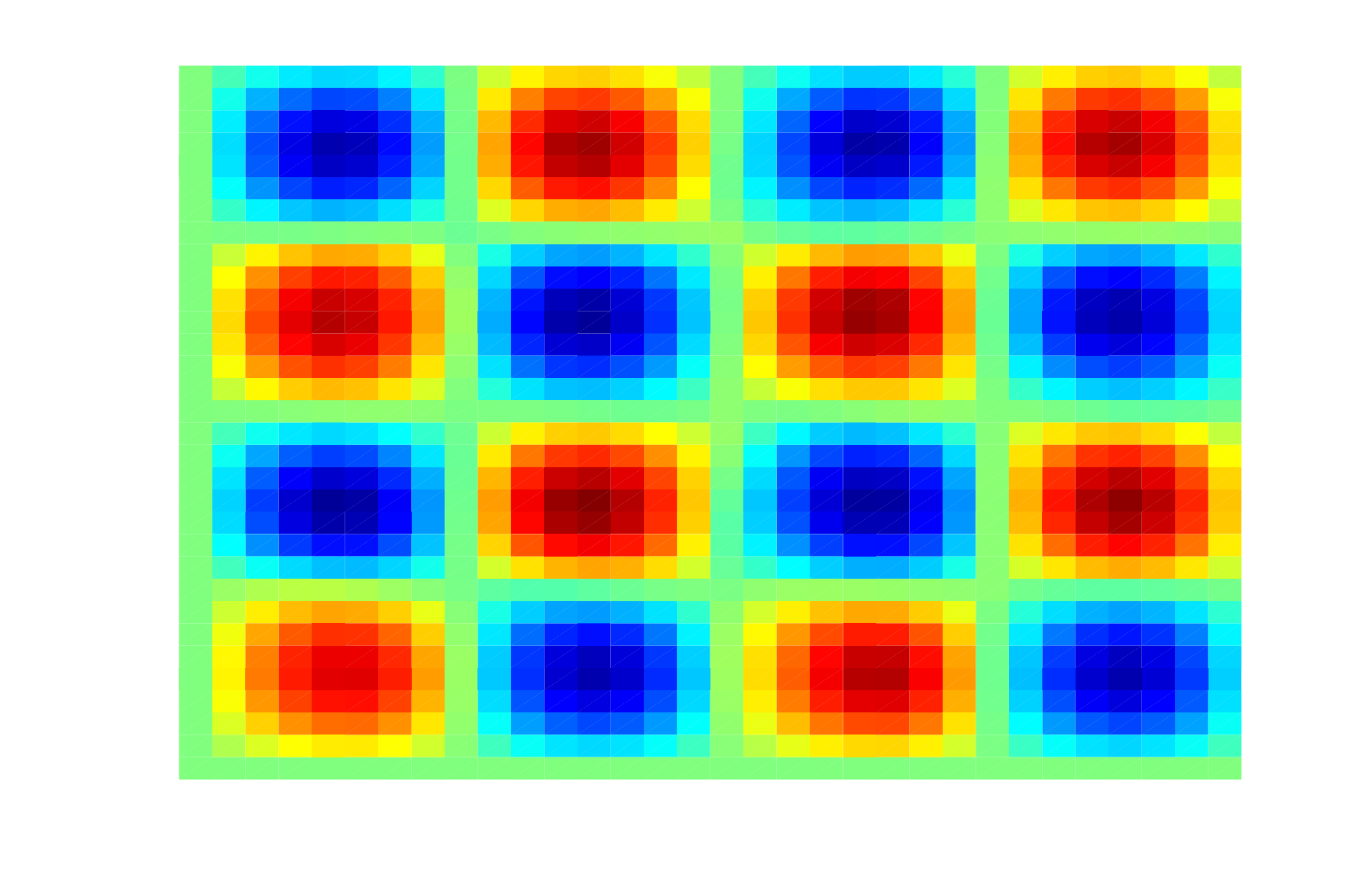}
        \captionsetup{labelformat=empty}
      %  \caption*{\ref{Smooth}b}
  \end{minipage}  
  \begin{minipage}[b]{0.5\linewidth}
  		\centering
  		\includegraphics[scale=.4]{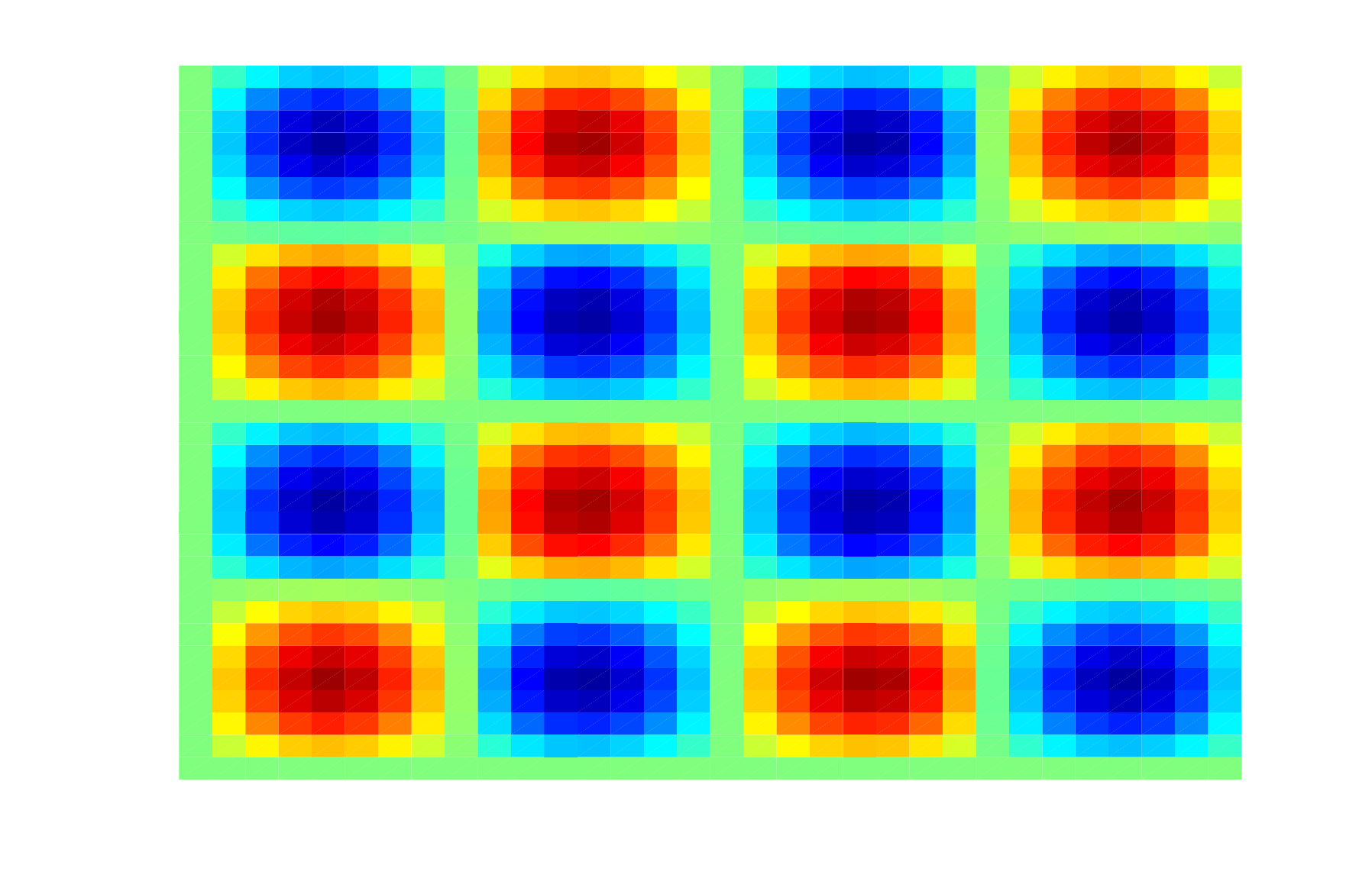}
        \captionsetup{labelformat=empty}
       % \caption*{\ref{Smooth}c}
  \end{minipage} 
  \begin{minipage}[b]{0.5\linewidth}
		\centering
        \includegraphics[scale=.4]{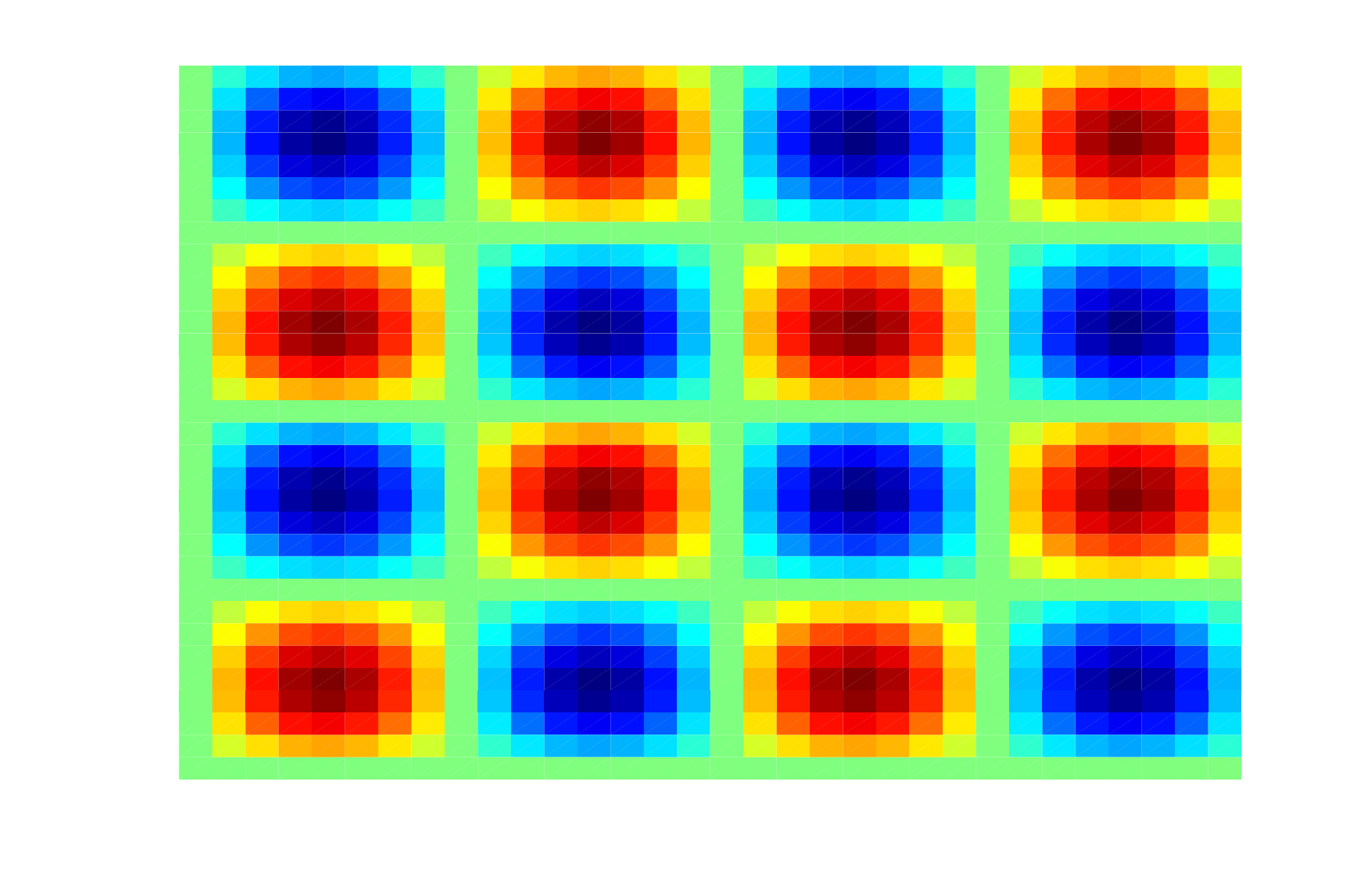}
        \captionsetup{labelformat=empty}
       % \caption*{\ref{Smooth}d}
  \end{minipage}
  \caption{Level plots of $3$ approximations of exact solution $u_2$ in (\ref{Exemplo2}). (Up-Left 1-$N_N$, 0-$N_D$), (Up-Right 2-$N_N$, 0-$N_D$) and (Down-Left 3-$N_N$, 0-$N_D$) evolving to exact solution (Down-Right).}
\label{Smooth2}
\end{figure}

In the first example, including Dirichlet functions doesn't have much impact in the approximation, the error in the energy norm decreases faster by including more Newmann functions. While in the second example by including Dirichlet functions the energy norm  decreases faster than by including   Newman functions as we can see in Figure \ref{EnergyErrors1}\\

Now let us consider two different heterogeneous fields. The first is shown is 
composed by 3 chanels of high permeability in $64 \times 64$ fictitious geological 
mesh shown in Figure \ref{Canais}. The second heterogeneous medium to be consider 
will be the last $64 \times 64$ block of the geological permeability $SPE10$ porous
medium taken from \cite{SPEProject} (see Figure \ref{sp10}). This is a widely 
used heterogeneous porous medium for simulations (see for example \cite{TenhSPE})\\

\begin{figure}[h!]
\centering
\includegraphics[scale=.5]{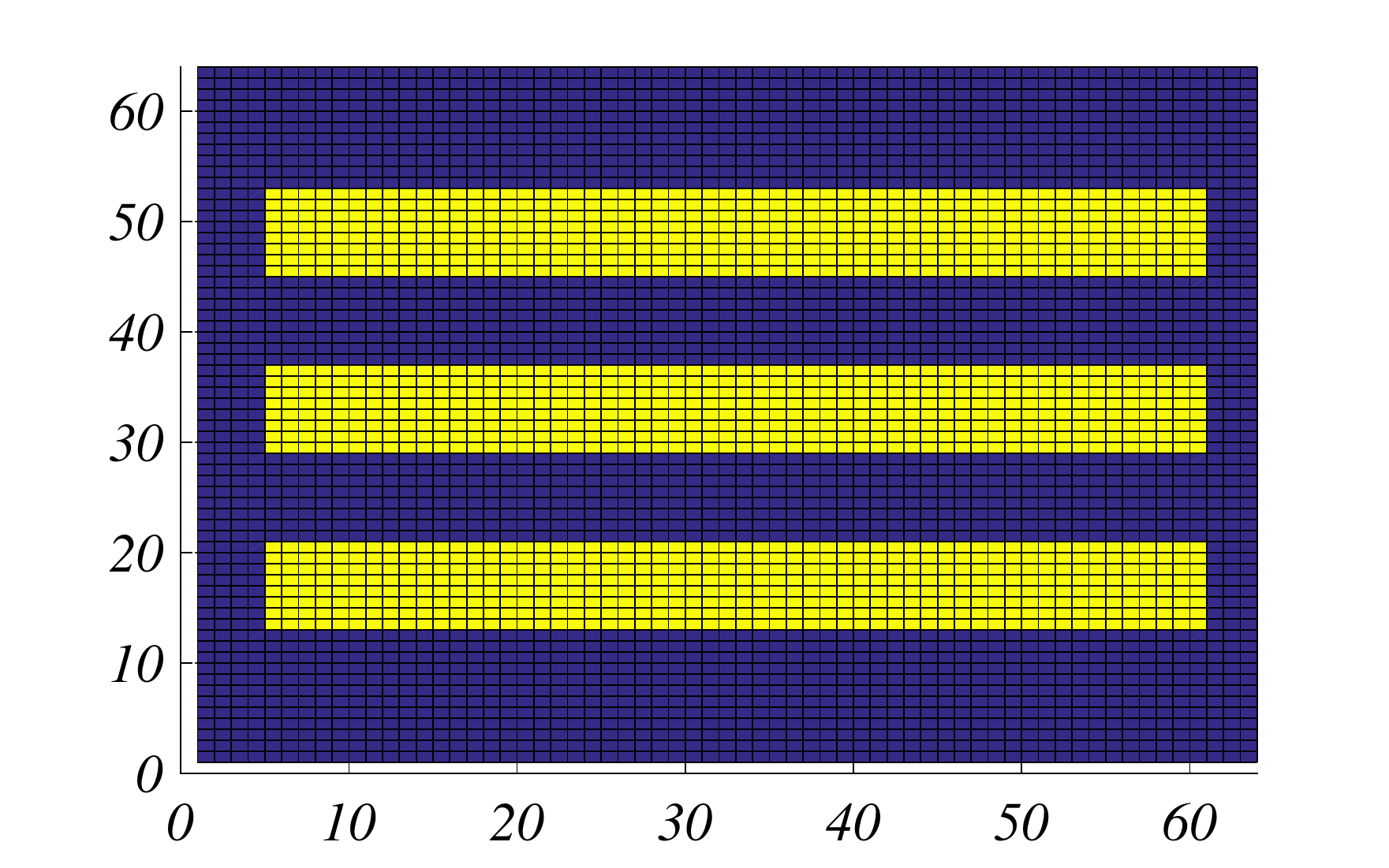} 
\caption{Heterogenous high permeability \textit{$3$-channels} medium in a $64 \times 64$ mesh.}
\label{Canais}
\end{figure}

\begin{comment}
\begin{figure}[h!]
\centering
\includegraphics[clip, trim={2.2cm 5cm 2.9cm 5cm}, scale=1.1]{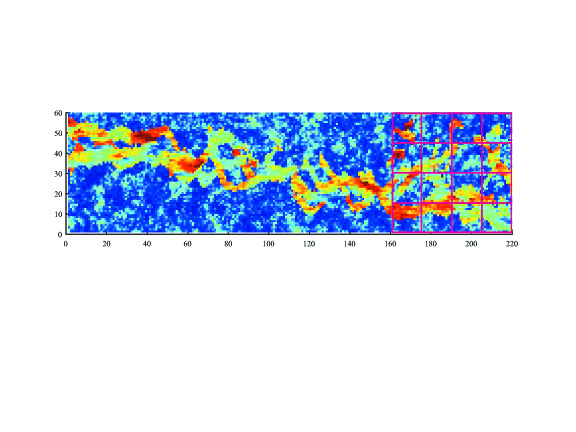}
\caption{A 2D layer of the $SPE10$ 2D porous medium sample from \cite{SPEProject}.}
\label{sp10}
\end{figure}  
\end{comment}

\begin{figure}[h!]
%\vspace{-0.5cm}
\centering
\includegraphics[clip, trim={1.9cm 6.4cm 1.9cm 3cm}, scale=1.1]{}
\caption{A 2D layer of the $SPE10$ 2D porous medium sample from \cite{SPEProject}.}
\label{sp10}
\end{figure}  

Next we perform two numerical experiments to evaluate the performance of including Dirichlet basis when heterogeneity of the medium is present. \\

 \textbf{Experiment $3$}: Let us consider a source taking alternating values $1$ and $-1$ on the  $4 \times 4$ coarse mesh $\mathcal{T}^H$ of $\Omega$ 
 \begin{equation}
     f_3(x,y) = 
     \begin{cases}
        1 & if ~~ (x,y) \in K_{2j}\\
       -1 & if ~~ (x,y) \in K_{2j-1}
     \end{cases}
     \label{Chess_font}
 \end{equation}
 with $K_{j} \in \mathcal{T}^H$ and $j \in \{1,..,16\}$. The coarse mesh basis are computed on the fine $512 \times 512$ mesh $\mathcal{T}^h$ and approximated reference solution is computed in a $1024\times 1024$ finner mesh using classical finite element $\mathcal{Q}^1$. As we did in  Examples $1$ and $2$  we fix the number of Dirichlet coarse basis in the expansion $N_D$ from $1$ to $20$ and vary the number of Newmann coarse basis $N_N$ from $1$ to $40$. We use the  \textit{$3$-channels} heterogeneous medium shown in Figure \ref{Canais}\\

\textbf{Experiment $4$}: Exactly the same parameters in Experiment $3$ except for the medium which will be taken from  SPE10 as previously described and shown in Figure \ref{sp10}.\\

\begin{figure}[h!]
  \begin{minipage}[b]{0.5\linewidth}
  		\centering
  		\includegraphics[scale=.5]{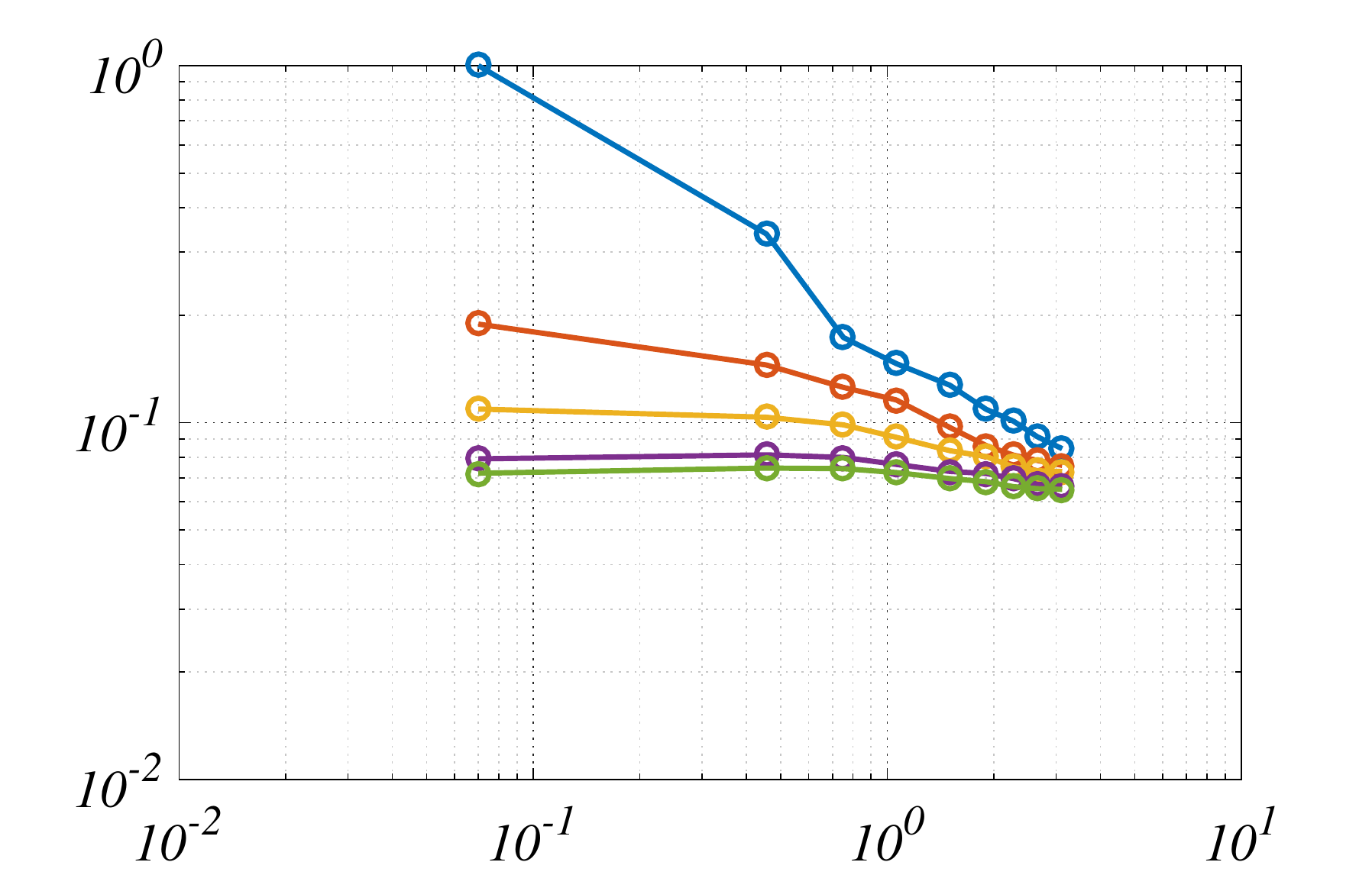}
  \end{minipage}  
  \begin{minipage}[b]{0.5\linewidth}
  		\centering
        \includegraphics[scale=.5]{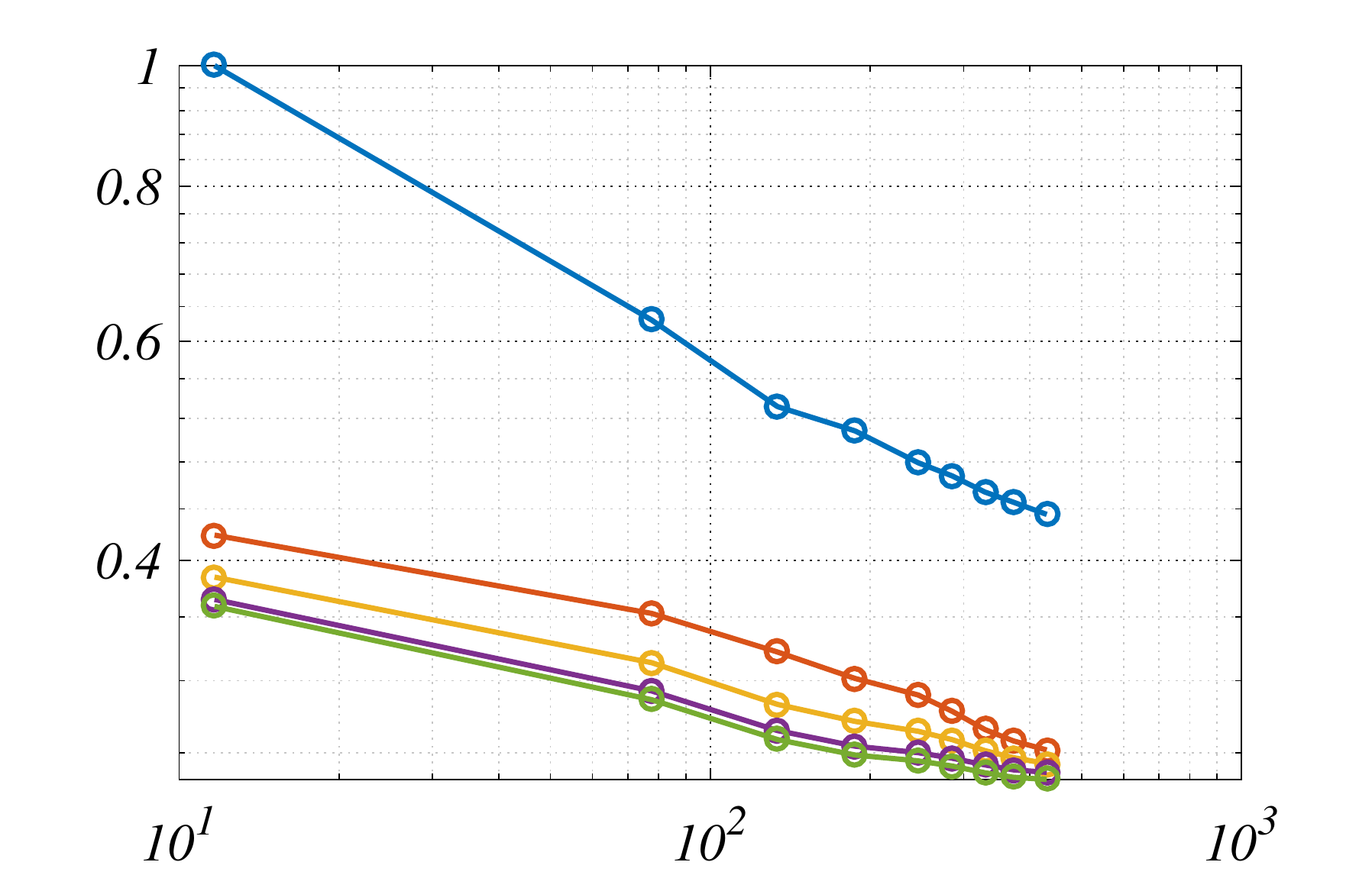}
  \end{minipage}  
  \caption{Log-Log plots of errors in the energy norm  Vs. first eigenvalue out of the expansion after applying GMsFEM to Experiments $3$ and $4$ using source term in (\ref{Chess_font}).(Left) Heterogenous $3-chanels$ medium (Figure \ref{Canais}). (Right) heterogeneous $SPE10$ medium (Figure \ref{sp10}). Each color correspond to a fixed $N_D = \{ 0,5,10,15,20\}$ and each circle correspond to $N_N = \{1,5,10,...,40\}$.} 
%  \label{EnergyErrors2}
\end{figure}

\begin{figure}[h!]
\vspace{-1cm}
  \begin{minipage}[b]{0.5\linewidth}
  		\centering
  		\includegraphics[scale=.4]{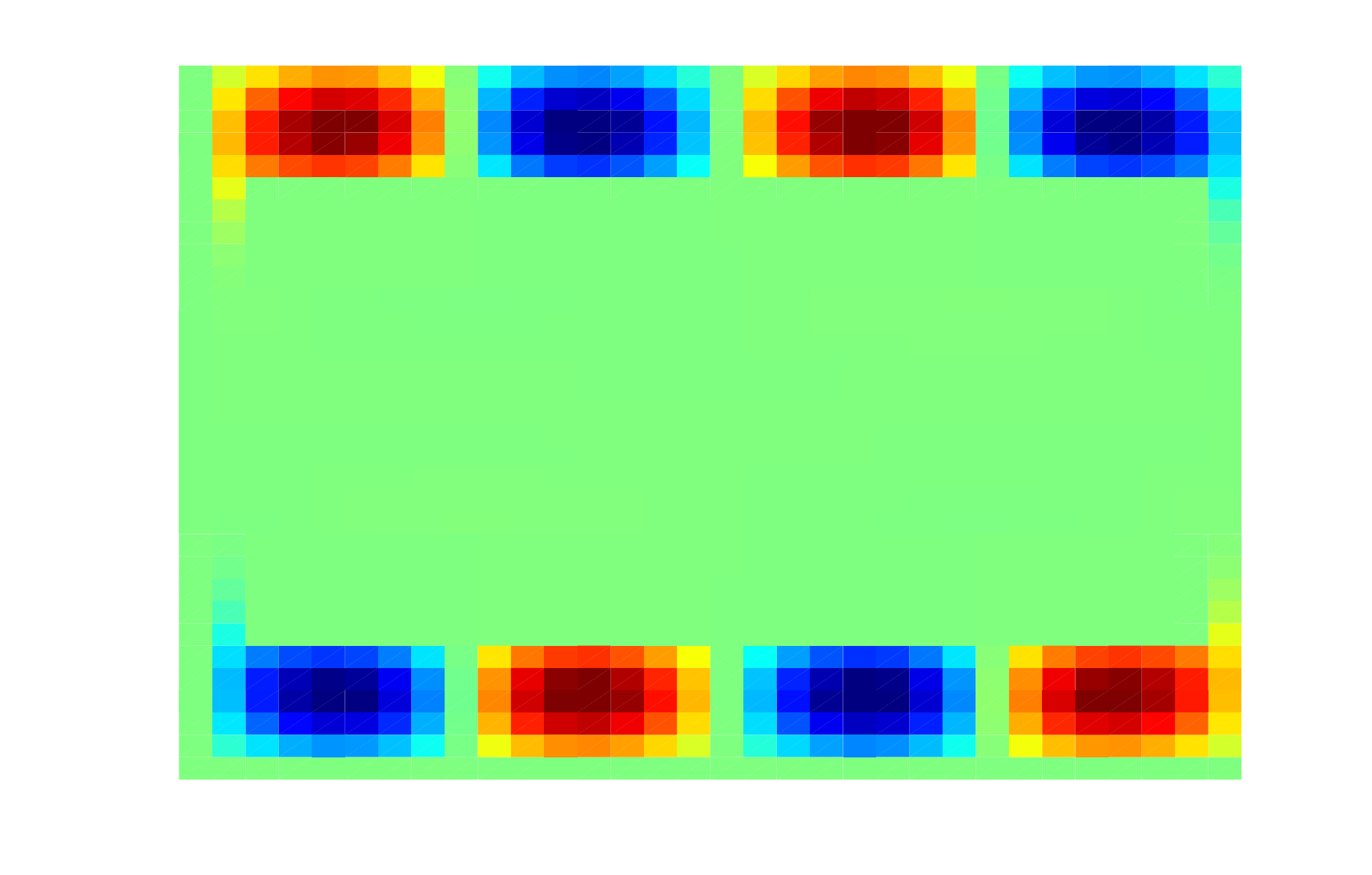}
        \captionsetup{labelformat=empty}
        %\caption*{\ref{Smooth}a}
  \end{minipage}  
  \begin{minipage}[b]{0.5\linewidth}
  		\centering
        \includegraphics[scale=.4]{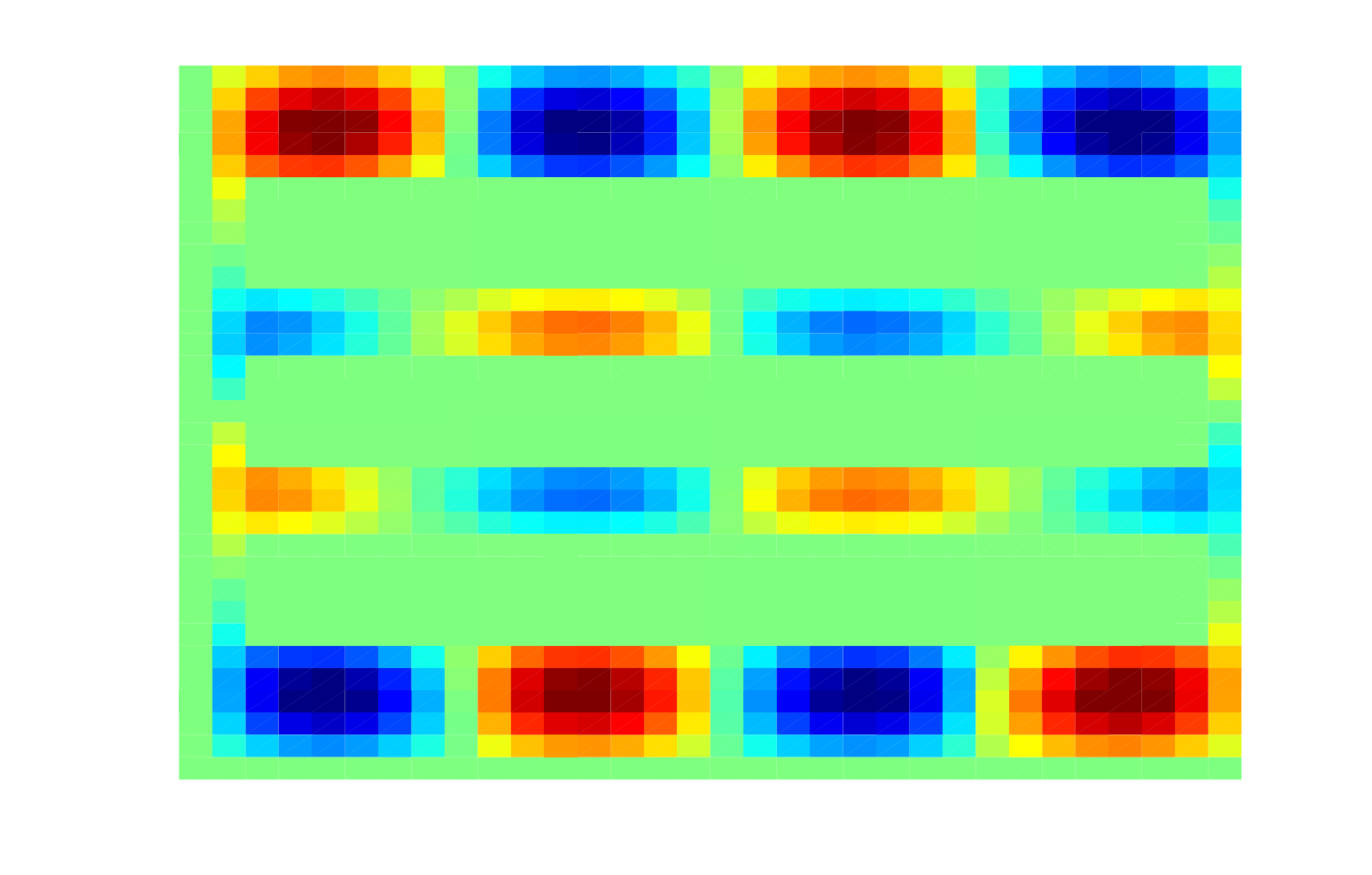}
        \captionsetup{labelformat=empty}
        %\caption*{\ref{Smooth}b}
  \end{minipage}  
  \begin{minipage}[b]{0.5\linewidth}
  		\centering
  		\includegraphics[scale=.4]{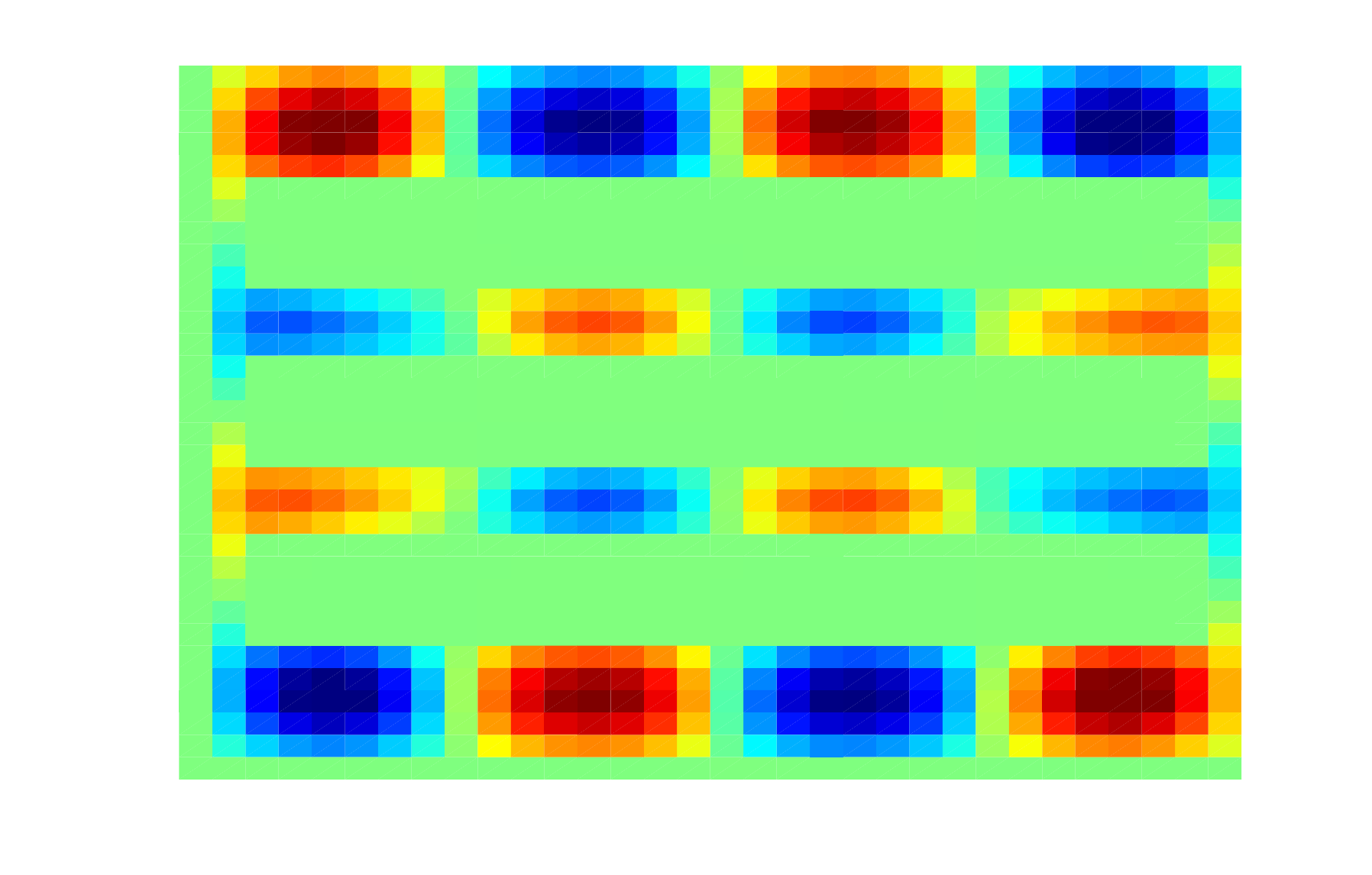}
        \captionsetup{labelformat=empty}
        %\caption*{\ref{Smooth}c}
  \end{minipage} 
  \begin{minipage}[b]{0.5\linewidth}
		\centering
        \includegraphics[scale=.4]{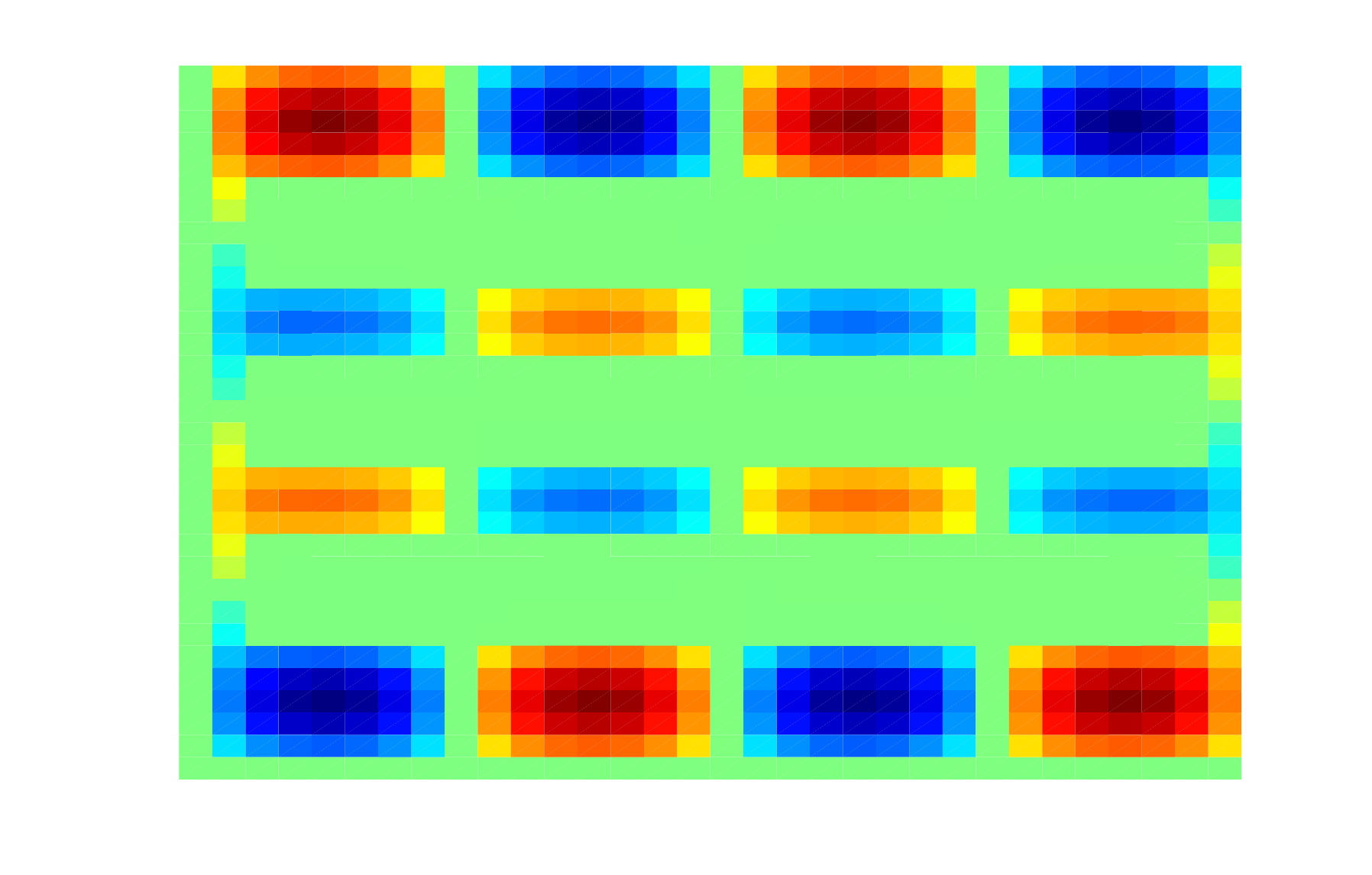}
        \captionsetup{labelformat=empty}
        %\caption*{\ref{Smooth}d}
  \end{minipage}
  \caption{Level plots of $3$ approximations of solution of Experiment 
  $3$ (Up-Left 1-$N_N$, 0-$N_D$), (Up-Right 5-$N_N$, 0-$N_D$) and (Down-Left 10-$N_N$, 0-$N_D$) evolving to reference solution (Down-Right) computed in a finner mesh $1024 \times 1024$.}
%\label{Smooth3}
\end{figure}

\begin{figure}[h!]
\vspace{-0.4cm}
  \begin{minipage}[b]{0.5\linewidth}
  		\centering
  		\includegraphics[scale=.4]{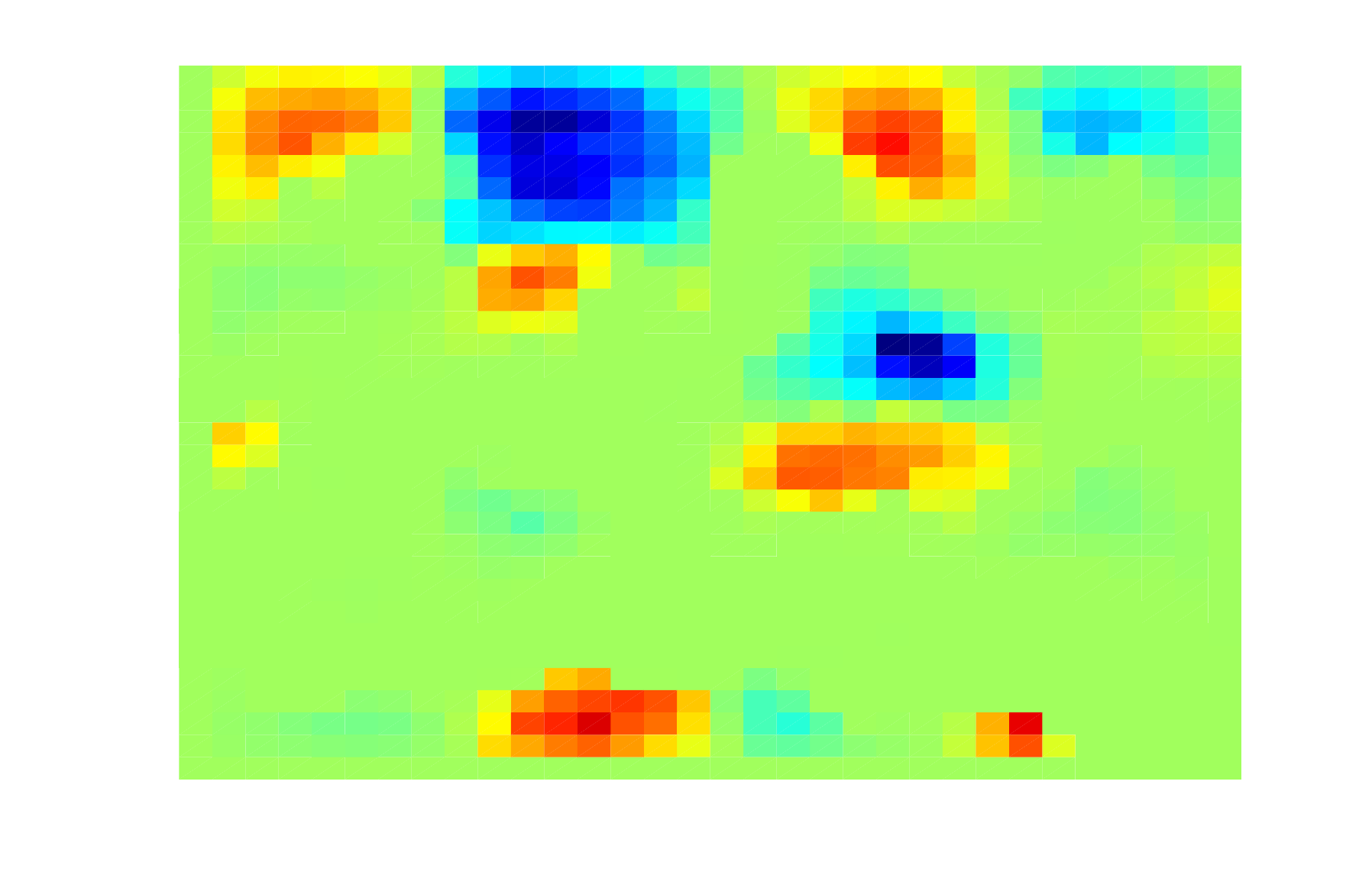}
        \captionsetup{labelformat=empty}
        %\caption*{\ref{Smooth}a}
  \end{minipage}  
  \begin{minipage}[b]{0.5\linewidth}
  		\centering
        \includegraphics[scale=.4]{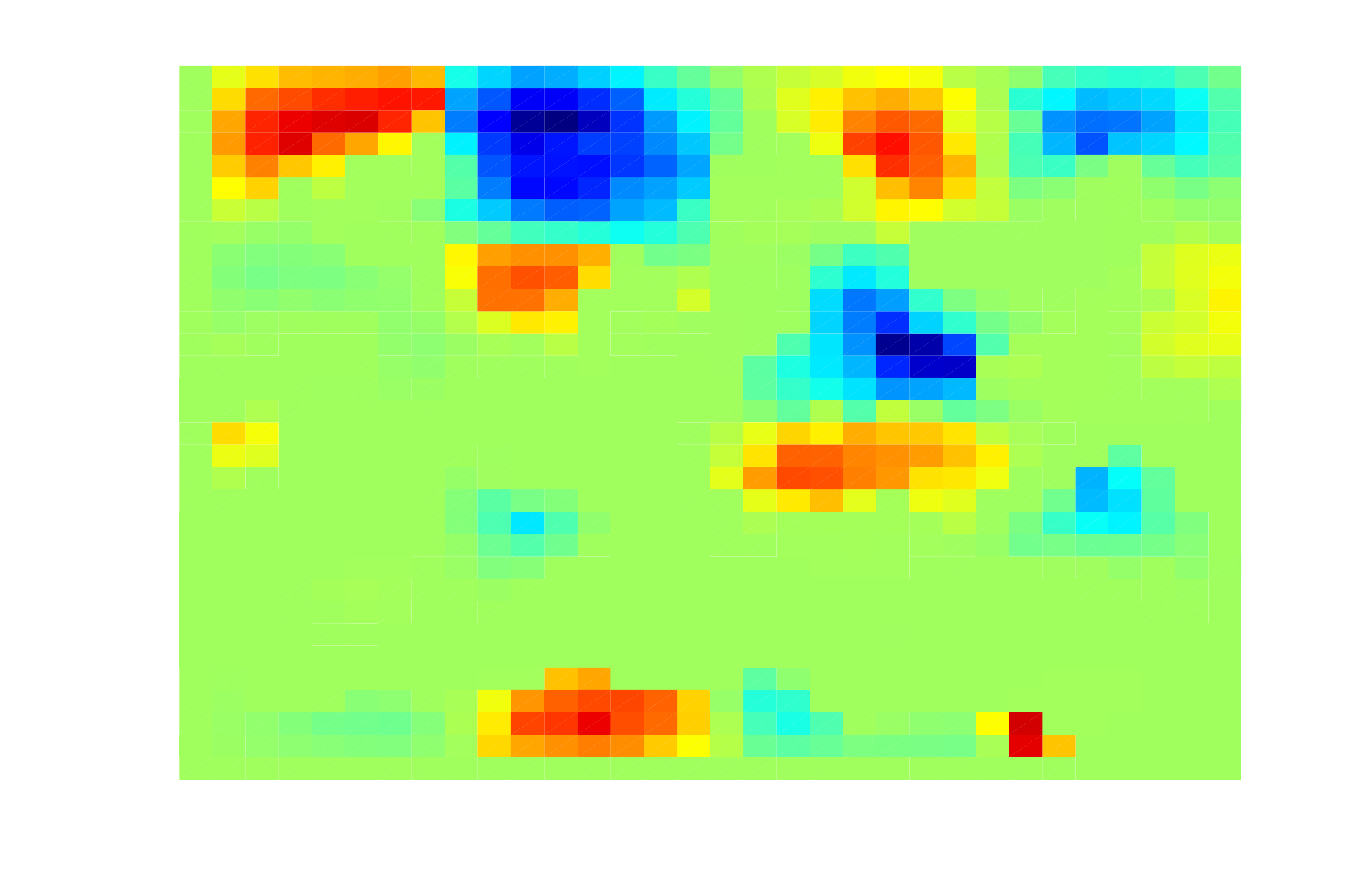}
        \captionsetup{labelformat=empty}
       % \caption*{\ref{Smooth}b}
  \end{minipage}  
  \begin{minipage}[b]{0.5\linewidth}
  		\centering
  		\includegraphics[scale=.4]{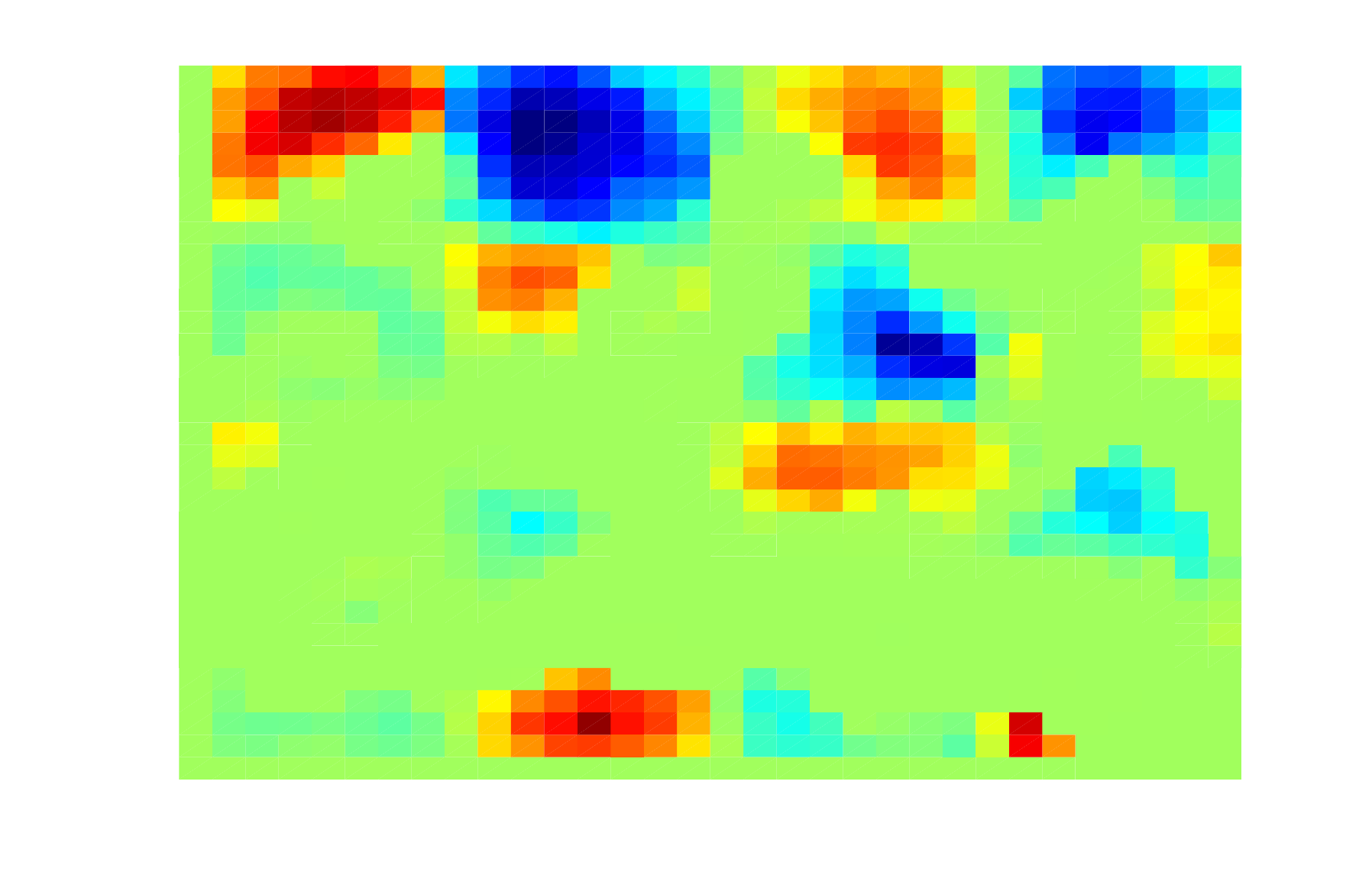}
        \captionsetup{labelformat=empty}
        %\caption*{\ref{Smooth}c}
  \end{minipage} 
  \begin{minipage}[b]{0.5\linewidth}
		\centering
        \includegraphics[scale=.4]{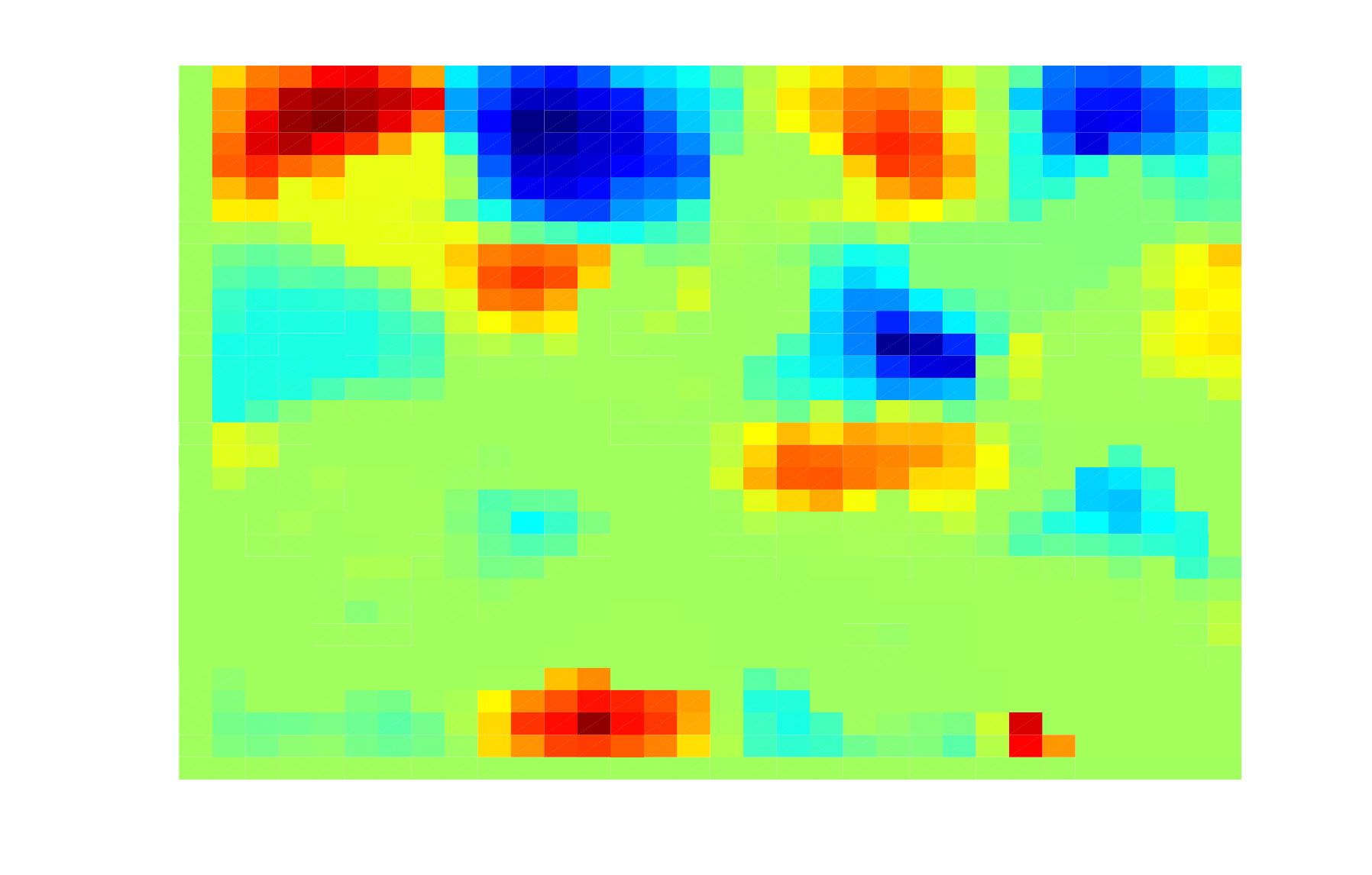}
        \captionsetup{labelformat=empty}
        %\caption*{\ref{Smooth}d}
  \end{minipage}
  \caption{Level plots of $3$ approximations of  solution of Experiment $4$. (Up-Left 40-$N_N$, 0-$N_D$), (Up-Right 10-$N_N$, 5-$N_D$) and (Down-Left 40-$N_N$, 20-$N_D$) evolving to reference solution (Down-Right) computed in a finner mesh $1024 \times 1024$.}
%\label{Smooth4}
\end{figure}

Both for Experiment $3$ and $4$ the approximated solution benefits from including Dirichlet basis in the expansion.

\section{Discussions and final comments}\label{sec:discussions}

In this paper, we obtained error estimates for GMsFEM approximation of 
high-contrast multiscale problems.  This construction uses local Neumann eigenvectors on neighborhoods and 
Dirichlet eigenvectors on elements 
to construct finite element basis function. The analysis is based on eigenfunction 
expansions and the norms used for 
the error estimates measure the decay of the expansion of the solution in terms of 
local eigenfunctions.  The norms in the interpolation error estimates can be  
bounded by the $L^2$ norm of a rescaling of the forcing term. For the analysis we assume that the solution can be approximated by a sum of two functions, one with zero flux across coarse blocks boundaries, and the other with zero value on coarse blocks boundaries. This assumption is easily verified for classical regular problems. The introduction of this assumption allowed us to extend and simplify the convergence analysis presented in \cite{egw10}.
The error estimates derived here can be applied to several situations that are under 
current research. For instance, it is possible to obtain error estimates for general 
bilinear forms using the analysis presented here combined 
with the construction of coarse spaces in \cite{EGLW12}.

\newpage
\section*{Notation}

In general we use the letters $\mu$ and $\phi$ to refer to Dirichlet eigenvalues and eigenvectors; and we use letters $\lambda$ and $\psi$ to refer to Newmann eigenvalues and eigenvectors. Letter $\Phi$ is reserved to generalized basis. 

\begin{table}[h!]\caption{Table of Notation}
\begin{center}% used the environment to augment the vertical space
% between the caption and the table
\begin{tabular}{r c p{10cm} }
\toprule
$\Omega$        &  pag.4  & Definition domain of the elliptic problem \\
$H^1(\Omega)$   &  pag.9 & Sobolev space of functions with continuous first derivatives on $\Omega$ \\
$H_0^1(\Omega)$ &  pag.4  & Sobolev space of functions with continuous first derivatives on $\Omega$ and vanishing on $\partial\Omega$\\
$w_i$           &  pag.5  & Neighborhood of the node $y_i$ of a triangulation.\\
$w^K$           &  pag.5 & Union of all neighborhood containing the element $K$\\
$\mu^K_l$       &  pag.10 &  $l-th$ Dirichlet eigenvalue of the local eigenvalue problem on $K$\\
$\phi^K_l$ &  pag.10  &  $l-th$ Dirichlet eigenfunction of the local eigenvalue problem on $K$.\\
$\lambda^{w_i}_l$ & pag.12   &  $l-th$ Newmann eigenvalue of the local eigenvalue problem on $w_i$.\\
$\psi^{w_i}_l$ &  pag.12  &  $l-th$ Newmann eigenfunction of the local eigenvalue problem on $w_i$.\\
$\Phi_i$       &  pag.12 & Generalased basis function associated to the neighborhood $w_i$ \\
$\mathcal{A}$  & pag.7  & Elliptic operator on $\Omega$.\\
$\mathcal{A}^K$ & pag.11 & Elliptic operator on the coarse block $K$.\\
$\mathcal{A}^{w_i}$ & pag.13 & elliptic operator defined on the neighborhood $w_i$.\\
$\mathcal{J}_L$ & pag.9 & Dirichlet projection operator truncated at $L$.\\
$\mathcal{J}_L^K$ & pag.13 & Dirichlet projection operator on element $K$ truncated at $L$.\\
$I^{\omega_i}_{L}$ & pag.18 & Newmann projection operator in on neighborhood $\omega_i$ truncated at $L$.\\
$I_{N}$ & pag.19 & Generalized basis coarse interpolation operator\\
$J_{D}$ & pag.19 & Dirichlet basis coarse interpolation operator\\
$ \widetilde{V}(\omega_i)$ & pag.12 & Space of functions in $H^1(w_i)$ which are zero on $\partial w_i \cap \partial\Omega$.\\
$|||v|||_{s;{D}}$ & pag.6 & Norms based on the eigenvalue expansion decay on an set $D$.
\end{tabular}
\end{center}
%\label{tab:TableOfNotationForMyResearch}
\end{table}

\section*{Acknowledgments}

Eduardo Abreu thanks the FAPESP for support under grant 2016/23374-1.
Juan Galvis  wants to thank KAUST hospitality where part of this work was developed and also the discussion on coarse space approximations properties and related topics with several colleagues, among them, Joerg Willems, Marcus Sarkis, Raytcho Lazarov,  Jhonny Guzm\'an, 
Chia-Chieh Chu, Florian Maris and Yalchin Efendiev.

\section*{References}
\bibliographystyle{elsarticle-num}
\bibliography{references}

\end{document}